\newcommand{\R}{\mathbb{R}}
\newcommand{\Z}{\mathbb{Z}}
\newcommand{\N}{\mathbb{N}}
\newtheorem{thm}{Theorem}[section]
\newtheorem{cor}[thm]{Corollary}
\newtheorem{rem}[thm]{Remark}
\renewcommand{\leq}{\leqslant}
\renewcommand{\le}{\leqslant}
\renewcommand{\geq}{\geqslant}
\renewcommand{\ge}{\geqslant}
\renewcommand{\epsilon}{\varepsilon}
\numberwithin{equation}{section}
\title[Stationary distributions and anomalous diffusion]{The L\'evy flight foraging hypothesis:
\\ comparison between
stationary distributions \\
and anomalous diffusion}\thanks{This research has been supported by the
Australian Laureate Fellowship FL190100081 ``Minimal
surfaces, free boundaries and partial differential equations''. 
The authors declare that they have no known competing financial interests
that could have appeared to influence the work reported in this paper.
It is a pleasure to thank Kurt Williams for interesting discussions.
We also gratefully thank the Referees for their constructive comments and recommendations.}
\author{Serena Dipierro}
\address{Serena Dipierro, Department of Mathematics and Statistics,
University of Western Australia, 35 Stirling Highway,
WA6009 Crawley, Australia}\email{serena.dipierro@uwa.edu.au}
\author{Giovanni Giacomin}
\address{Giovanni Giacomin, Department of Mathematics and Statistics,
University of Western Australia, 35 Stirling Highway,
WA6009 Crawley, Australia}
\email{giovanni.giacomin@research.uwa.edu.au}
\author{Enrico Valdinoci}
\address{Enrico Valdinoci, Department of Mathematics and Statistics,
University of Western Australia, 35 Stirling Highway,
WA6009 Crawley, Australia}
\email{enrico.valdinoci@uwa.edu.au}
\keywords{L\'evy flights, fractional Laplacian, optimal strategies.}
\begin{document}

\maketitle

\begin{abstract} We consider a stationary prey in a given region of space and
we aim at detecting optimal foraging strategies.

On the one hand, when the prey is uniformly distributed, the best possible strategy for the forager is to be stationary and uniformly distributed in the same region.

On the other hand, in several biological settings, foragers cannot be completely stationary,
therefore we investigate the best seeking strategy for L\'evy foragers in terms of the corresponding L\'evy exponent. In this case, we show that the best strategy depends on the region size in which the prey is located:
large regions exhibit optimal seeking strategies close to Gaussian random walks, while
small regions favor L\'evy foragers with small fractional exponent.

We also consider optimal strategies in view of the Fourier transform of the distribution of a stationary prey. When this distribution is supported in a suitable volume, then the foraging efficiency functional is monotone increasing with respect to the L\'evy exponent
and accordingly the optimal strategy is given by the Gaussian dispersal.
If instead the Fourier transform of the distribution of a stationary prey is supported in the complement of a suitable volume, then the foraging efficiency functional is monotone decreasing
with respect to the L\'evy exponent
and therefore the optimal strategy is given by a null fractional exponent (which in turn
corresponds, from a biological standpoint, to a strategy of ``ambush'' type).

We will devote a rigorous quantitative analysis also to emphasize some specific differences between the
one-dimensional and the higher-dimensional cases.
\end{abstract}

\section*{Preamble: the L\'evy flight foraging hypothesis for a general audience}

Understanding how animals search for food in their environment is not only important for our knowledge of nature and animal behavior but it is also useful for suggesting optimal seeking strategies, for instance, for robots and internet engines.

The classical description of animal foraging relied on ``random walks'': an animal looking for something to eat in a big area, such as a forest, unaware of the details of the environment, was supposed to take a small step in a random direction and eat the food found in this way.

However, ``longer'' steps may provide a better strategy. After all, on many occasions, the preys do not sit there waiting for being eaten, therefore a predator may have better hunting chances through a ``hit-and-go'' procedure.
Also, preys may occur in sparse patches, therefore hunting strategies that are too ``localized'' may end up revisiting too often the same region, not allowing for a sufficiently fast exploration of the territory.

In this scenario, the L\'evy flight foraging hypothesis suggested that sometimes it may be better for the forager to take a few really long steps, almost like jumping, instead of taking lots of short steps. 

In jargon, these long jumps are called ``L\'evy flights'' and they may be useful to cover ``more ground'' quickly: animals which thereby explore a larger area in less time may be more ``efficient'' in finding food. Moreover, strategies of this sort could help animals adapt to different environments, adjusting to different conditions and prey distributions, and finally enhancing their chance of survival.

The L\'evy flight foraging hypothesis is however rather controversial and it is very difficult to have ultimate answers for such a complex topic, involving so many parameters and so various conditions.

When things get difficult, some mathematics can be helpful, also to isolate different kinds of complications, to provide simple, but unambiguous models, to provide some exact answer under precisely stated and possibly verifiable hypotheses.

In this paper, we investigate the L\'evy flight foraging hypothesis looking at several main scenarios.

When prey is evenly spread out in a given area, the best strategy for the predator is to remain in that area and search around.

But in some cases, animals cannot just stay in one place. They need to move around to search for food or for a number of other activities (such as finding a partner). In these situations, in large areas, it may be better for foragers to move in a way that is similar to random walking, but in smaller areas the L\'evy pattern may be preferable.

Analytical methods, such as the Fourier transform, can be useful to describe the distribution of prey and detect where it is concentrated. If prey is mostly concentrated in a certain area, it appears that random walks are preferable, but if prey is mostly found outside of that area very long jumps may become optimal, also in view of ``ambush'' strategies.

The dimension of the environment also plays an important role.

\section{Introduction}\label{1-Sljn3hfig}

A classical topic in mathematical biology consists in the detection of optimal strategies for a forager in search of food. This problem has been broadly investigated in the recent literature also in relation with the so-called L\'evy flight foraging hypothesis, according to which L\'evy flights, and in particular the ones with inverse square distribution of flight times or distances, can optimize search efficiencies for sparsely and randomly distributed revisitable preys in the absence of memory, reducing the oversampling associated with the classical Brownian motion. On the one hand, over the years experimental evidence has supported the occurrence of L\'evy flights in animal environment (see for example~\cite{101242jeb009563}, in which the patterns of foraging honeybees were traced using harmonic radar
and detecting resemblance with L\'evy-flights,
\cite{SIMSS},
in which diverse marine predators, such as sharks, bony fishes, sea turtles, and penguins, have been demonstrated to exhibit a suitable L\'evy type behavior, 
\cite{ARFGBHAR}, 
which proposed that the motility of some lymphocytes in the brain is accurately described by a L\'evy distribution, also enhancing their capability of finding rare targets,
\cite{CHEN},
which showed how intracellular trafficking self-organize into L\'evy patterns,
\cite{ARIEL}, in which dense swarms of bacteria
have been shown to perform a L\'evy type superdiffusion, 
etc.). Additionally, L\'evy patterns seem to occur also in human activities
(see e.g.~\cite{FLLOW} which showed that
geographical movements of US banknotes display dispersal relations similar to L\'evy flights).

On the other hand, the L\'evy flight foraging hypothesis is still a highly debated and somewhat controversial topic and a precise and rigorous approach appears to be essential in stating exactly the hypothesis and in describing carefully the several biological phenomena related to it (see e.g.~\cite{MR4071775, MR4207853, MR4207854, Xkferwupoj54t-0olr, CLEME}; further suggestions and disagreements related to
L\'evy flights will be discussed in Section~\ref{DIAsdff}). See~\cite{RAYNS} for a detailed review.

In this context, we have recently introduced and discussed several ``efficiency functionals'' related to animal foraging and motivated by the L\'evy flight foraging hypothesis, see~\cite{EFLFFH, ULTI, DGV1, DGV1-1, SPETTRALE}. In a nutshell, the gist of these functionals is to consider the distribution of foragers and preys as probability densities and evaluate the outcome of the search from the random encounters of foragers and preys, as encoded by the product of the corresponding distributions. In addition, one can account for the energy spent by the forager during the search, by considering it as a penalization gauging the time needed for the hunt or the distance traveled by the forager. {F}rom the biological point of view, one may also consider negative values of the distribution of preys, e.g. to account for a lack of resources or for regions containing poisonous food.\medskip

A simplified, but biologically interesting, scenario is that of stationary preys: in this case the distribution of prey does not depend on time. In this setting, one could also consider the case of stationary foragers, or foragers with the ability of choosing the foraging strategy of remaining stationary. We stress however
that several biological scenarios exhibit foragers which can never remain stationary. For instance, some species of sharks (white sharks, bull sharks, sandbar sharks, porbeagles, salmon sharks, thresher sharks, pelagic threshers, bigeye threshers, alopias vulpinus, mako sharks, hammerheads, winghead sharks, bonnetheads, scoopheads, etc.) are considered obligate ram ventilators and must keep moving in order to receive oxygen from the water passing through their gills. Similarly, some species of whales, dolphins, tunas, billfishes, swordfishes, bonitos, scombrids and bluefishes need to keep moving. See e.g.~\cite{92-20082wqfer157967, 11-9oijhknfOLIjndwkv, 13-20082wqfer, 19Xo0jlfj567890}.
Some species of lizards, gerbils and turtles are also known to be very actively moving animals, see e.g.~\cite{77-kpXmvd2891937, 00-jn200083ojwflb3123, 2004THGIUDKNF}. Drifters in constantly moving waters, such as some jellyfishes, or parasites, such as cyamids  attached to constantly moving cetaceans, can be thought as seekers in continuous motions (see e.g.~\cite{16-wkolmnbXbgmflmn, Hadfield2019}).  Also, in principle, the capacity of detecting adjacent preys could be influenced by the length and tempo of the forager's attention span (see e.g.~\cite{BRUSJDN, oaskcndATT} for some measurements of these quantities), hence the ansatz that stationary foragers are always capable to intercept nearby preys would also require some specific validation. Therefore it is biologically very interesting to compare 
the efficiency of foraging strategies of stationary and diffusive foragers in different environmental scenarios and to provide a rigorous mathematical framework to address these types of problems.
\medskip

The results that we obtain in this paper deal with a suitable efficiency functional (which will be made precise below) and with its optimization in terms of a L\'evy flight fractional exponent~$s\in(0,1)$ (the case~$s=1$ corresponding to Gaussian diffusion). Surfing over technicalities, our results\footnote{The numbering of these theorems matches the corresponding ones presented in Section~\ref{forma} in a formal mathematical setting.
The biological interpretation of these results has not to be taken literally, and several
limitations will be discussed in Section~\ref{DIAsdff}.} in plain language can be expressed as follows:\\
\begin{description}
\item[Theorem~\ref{gbearacdf43s0}] {\em Consider two bounded regions~$\Omega_1\subset\Omega_2$ of~$\R^n$
and a stationary prey uniformly distributed in~$\Omega_1$. Then, a stationary forager uniformly distributed in~$\Omega_2$
attains a higher efficiency than any diffusive forager which is initially uniformly distributed in~$\Omega_2$.}\\
\item[Theorem~\ref{kkjyyeffcdf5128}] {\em Consider a bounded region~$\Omega\subset\R^n$
and the case of a stationary prey uniformly distributed in a dilation~$\Omega_r$ by a factor~$r$ of~$\Omega$, combined with
a diffusive forager starting with a uniform distribution in~$\Omega_r$.
Then, large dilations and very long searching times are optimized by dispersals close to~$s=1$
(the case of small dilations requiring a finer discussion in terms of dimension).}\\
\item[Theorem~\ref{0ojrt9-43hgoiuf6hb7v65yTHMSMD}] {\em
If the Fourier transform of the distribution of a stationary prey is supported in a suitable ball, then this efficiency functional is monotone increasing in~$s$
and therefore the optimal strategy is given by the Gaussian dispersal~$s=1$.
If instead the Fourier transform of the distribution of a stationary prey is supported in the complement of a suitable ball, then the efficiency functional is monotone decreasing in~$s$
and therefore the optimal strategy is given by~$s=0$.}
\end{description}\medskip

In the forthcoming work~\cite{SPAN} we will also show that the framework proposed here is general enough to consider the case of
foragers with {\em finite lifetime} (i.e., of foragers who cannot survive after a certain time if they do not eat sufficient food).

The results presented in this section are just temporary, and somewhat imprecise, statements, aimed at
introducing problems, methodologies and solutions without relying on technical lingo
(also, Theorem~\ref{kkjyyeffcdf5128} will be further specified according to quantitative parameters which
shed light on the difference between the one-dimensional and the higher-dimensional cases).
Indeed, we will give a more precise statement of these results in Section~\ref{forma}
and compare them with the corresponding biological scenarios. Before that, we will analyze some limitation and criticalities of the setting into consideration in Section~\ref{DIAsdff}.

\section{Criticalities and limitations of the L\'evy flight foraging hypothesis}\label{DIAsdff}

Scientific theories are always constrained by the extent of existing knowledge and of the available experimental observability.
Biological theories often face the additional difficulty of dealing with very complex and multifaceted phenomena, in which many different components diversely contribute and influence each other. The vast interactions of the biological world are, on numerous occasions, a severe obstruction to the mathematical analysis of a given phenomenon, since, due to the technical difficulties involved, it would be more desirable to ``separate'' different scenarios, ``isolate'' different contributions and ``weigh'' them differently. However, in doing so, it is virtually impossible to avoid replacing the ``real world'' with a somewhat simplified model. This model, however, is not merely an abstract artifact and, every so often, is helpful towards a deeper understanding of the original phenomenon.\medskip

The L\'evy flight foraging hypothesis is not an exception in terms, on the one hand, of the creative stimulus towards the advance of knowledge
and, on the other hand, of the criticalities and limitations
intrinsic in its modelization and analysis. Actually, as it always happens to important scientific theories, the L\'evy flight foraging hypothesis is (and must be) constantly challenged, improved and revised.
In all this, in our humble opinion, it would be scientifically erroneous to simplistically
deem theories as ``right'' or ``wrong'' (for the same reason for which
Einstein's general relativity does not make Newtonian mechanics wrong), instead the main resource of science consists precisely in having different visions to compare, different theories to confront, 
different scenarios to which different approaches fit better, in an harmonious framework in which these diversities complement and complete each other.

In particular, it is often difficult to discern different causes which may contribute to the formation of L\'evy patterns: for example, in alternative to optimal searching strategies to detect scarce, randomly distributed resources, an alternative hypothesis has been also explored in~\cite{BOYE},
for which this pattern may result from the interaction of the foragers with a particular distribution of resources.

The literature is also full of concrete examples of theories and explanations
which involved the L\'evy flight foraging hypothesis
which have been intensively debated and modified over time. For instance,
\cite{ILP} monitored five wandering albatrosses on Bird Island by attaching
a salt-water immersion logger to one of their legs, with the aim of
recording flight durations (i.e., time intervals between landing on the ocean)
from the evidence of the logger being dry. In doing so, it
was assumed that birds landed on the water solely to forage
and the data collected seemed to strongly imply that
the albatrosses were performing
L\'evy flights. This conclusion was challenged in~\cite{REVIS}, due to a new set of high-resolution data,
which instead suggested the absence of L\'evy flights and 
that the extremely long flights, essential for demonstrating the L\'evy flight behavior, were actually spurious,
due to the fact that birds sitting on a nest would maintain their logger dry, without flying. 
See also~\cite{112-299535910-1182.1}, in which seventeen data sets from seven previous studies have been revisited and reconsidered, also stressing the importance of considering possible alternative hypotheses to L\'evy flights instead of taking them for granted (the final conclusion of~\cite{112-299535910-1182.1} being that ``L\'evy flights do not appear to have potential application as marine ecosystem indicators'').

Yet, that was not the end of the story. In~\cite{NUOKMDe0rotg}
an accurate GPS monitor of albatrosses detected both
L\'evy and Brownian movement patterns for individuals.
This is very telling, since it already highlights some of the specific criticalities of this research topic, such as the dependence of the results on the technology, on the instruments of measurements available at the time of the experiment, on the quantity and quality of data, and it emphasizes the difficulty in distinguishing between ``individual'' and ``collective'' behaviors.

In relation to this, also the path analysis of black bean aphids (small insects that feed on plants) carried out in~\cite{n9ut4hpibSDJMgrnc7n5}
evidenced a structural difference between individual and collective behaviors, in this case pointing out that
individuals may follow classical Brownian patterns, but, due to the variation at population level
(some individuals move much farther than others)
the collective motion displays superdiffusion structures similar to a L\'evy flight
(from the mathematical point of view, because individual movement lengths can be exponentially distributed, but with different parameters of this exponential distribution, so that the collective superposition of these exponential distributions
produce a power law of L\'evy type).

Furthermore,
it was proposed in~\cite{KAMSSNdkvse54NA} that
L\'evy movements accelerate mussels' pattern formations, but, confronting with a larger model set, it was later argued in~\cite{lmdcommskcvdCOMS}
that mussels' movements could be more accurately described by a composite Brownian walk.
This topic was reconsidered also in~\cite{0pqwojdlfeacshdnACCsq-lwd-fv}
bringing mussels' movements back into the family of L\'evy patterns
(though, as mentioned in~\cite{0pqwojdlfeacshdnACCsq-lwd-fv},
``for some of these other taxa, as with the mussels, it is possible that the tails of the step-length distributions are strictly not power-laws but are instead better represented by functions with multiple parameters'').

The comparison between L\'evy type diffusion and composite Brownian walks also emerged in~\cite{dec8f1c6-9001-3bee-9e37-330528089f10}
in which the alternation of extensive and intensive searching modes was related to a mixture of classical random walks, also stressing that an
 emergent movement patterns, such as a L\'evy distribution, should not be confused with the original process giving rise to the patterns, which can well be of classical type.
On this note, the role of the environmental feedback has been further analyzed in~\cite{BEAT},
addressing specifically the case in which the probability for the forager to detect the prey encountered during movement is low, further reinforcing the hypothesis in favor of a two-scale composite Brownian walk.

Also in relation to human mobility patterns, it is sometimes inferred from~\cite{FLLOW} that
human trajectories are best modeled by L\'evy distribution (roughly speaking,
given that money is carried by individuals, one could argue that the L\'evy type dispersal of
banknotes pointed out in~\cite{FLLOW} is a sufficiently accurate proxy for human movement). 
However, this conclusion was challenged in~\cite{GONZHUM}, on the basis that dollar bills follow the trajectory of their current owner
(thus exhibiting a diffusion phenomena) while individuals tend to return to a few highly frequented locations, such as home or work.
See also~\cite{K2345tlages2018}
for several examples and counterexamples of experimental data either confirming or refuting 
the L\'evy flight foraging hypothesis.

Another sector related to human activities in which L\'evy flights have been adopted is related to crime modeling. In this case, a proxy of the criminal motion is taken as the distance between criminals' homes and their targets as evidenced by solved crimes. Several sets of data indicate that burglars may undertake long trips toward attractive\footnote{It is in fact suggestive to compare the willingness of burglars to perform long excursions in view of more appealing targets with the ``ambush strategy'' for animal foragers looking for a high energy content prey, as it will be discussed on page~\pageref{AMdowfekgherikr0BUS}.} burglary sites and that experienced professional burglars are willing to travel farther than amateurs (see~\cite{16cf5b2d-4a8c-3bdc-aedb-9a006440ab8a, 800515fa-8f16-31e6-983d-80e0b8498330-98, fe9a402d-ee59-3159-bd62-493f1ef26fb8-11});
both agent-based models driven by the
fractional Laplacian operator
(see~\cite{MR3090652}) and mean-field continuum models with truncated L\'evy flights (see~\cite{MR3847185}) have been utilized to model these phenomena (and the two models are structurally different, since the continuum model ends up being more related to classical diffusion, rather than fractional diffusion).
\medskip

In addition, L\'evy flights are perhaps less ``universal'' than what one may expect and their efficiency highly dependent on small
modifications of the search conditions: for instance, it has been shown in~\cite{wpojlfJDlnf}
that the drift caused by currents or winds may let the L\'evy forager overshoot the target, favoring instead a classical Brownian forager,
or L\'evy foragers with exponents in the entire interval~$\left[\frac12,1\right]$.
\medskip

Other criticalities of the L\'evy flight foraging hypothesis include:
\begin{itemize}
\item Given the complicated genetic
interactions, there is no guarantee that natural selection alone
can always optimize a specific parameter (such as the L\'evy distribution exponent),
\item The classical L\'evy models do not take into account
the memory of the forager and the correlation between directions of successive movements,
\item The comparison between L\'evy and
Brownian movements often
does not consider other possible movement models,
\item The foraging strategy may depend on a large number
of parameters (density and mobility of preys, environmental conditions, etc.),
\item It is often difficult to distinguish
when animals are searching for food or
moving for some other reason (e.g. between known locations).
\end{itemize}
Given this and other difficulties, the L\'evy flight foraging hypothesis has received severe criticisms, see in particular~\cite{OkPY}.\medskip

Nonetheless,
we also recall that alternative (and somewhat reinforcing) versions of the L\'evy flight foraging hypothesis were proposed, see e.g.~\cite{REYNOLDS201559}, where the author considered the situation in which L\'evy patterns do not arise specifically to optimize the search of food but emerge spontaneously as byproducts of otherwise ``innocuous'' behaviors: if advantageous, these L\'evy patterns are maintained by the natural selection.\medskip

Several researchers also intensively discussed a technical criticality of L\'evy flights
related to the divergence of the corresponding mean square displacements.
Specifically, while the mean square displacement of the classical Brownian diffusion is linear in time
(hence the mean displacement grows as the square root of time),
the one associated with the L\'evy flights anomalous diffusion is infinite.
At first glance, this feature of the L\'evy flights seems problematic:
one may think that a model arising from
random walks with an infinite average square step size is a biological nonsense and that
no organisms can have movement patterns resembling L\'evy flights, since
in nature each step must be of some finite length
and no organism can escape to infinity in finite time. It is sometimes believed that the only way to circumvent this logical incongruence is to replace the notion of L\'evy flights with that of L\'evy ``walks''
(a continuous random walk typically executed at constant speed
with a coupled space-time
probability distributions penalizing excessively long steps). This has created a rather intense debate concerning
infinite versus finite propagation velocity, often considering L\'evy flights rather pathological.

However, in the words of George Box, ``all models are wrong, some are useful'' and the models based on  
L\'evy flights have the convenient advantage of producing diffusive operators that can be coherently and rigorously treated from a mathematical point of view. This is a strong indication that, maybe, after all,
the model is not only useful, but also not completely wrong.
Specifically, an infinite mean square displacement does not mean that all organisms escape to infinity straight away, it only means that while long steps are rare, they are not sufficiently rare to make their squared average dislocation finite. In other words, the divergence of the second moment
of the jump size distribution indicates that at least
few long steps are present, but the coherence of the model relies on the fact that most steps are much smaller, and, all in all, most, but not all, steps are ``quite short''.
See~\cite{Shlesinger1986, PHE} for further discussions about L\'evy flights and L\'evy walks.
\medskip

To confirm that L\'evy flights and objects with infinite mean square displacement do exist in nature, see e.g.~\cite{al2143terteioquwdjkfeng}, in which anomalous diffusion was observed, in a certain range of concentration and temperature, for a systems of
breakable micelles. In this framework, the fact that
the second moment of the jump size distribution diverges was interpreted as a byproduct of the fact that
short micelles diffuse more rapidly than long ones, in such a way that the effective diffusion constantly increases with time.

See also~\cite{1025t324}, where the observations of a flow in a rapidly rotating tank
in a suitably non-turbulent regime indicated the presence of L\'evy flight patterns. In this situation,
the divergent second moment for flight time resulted from the alternate behavior of the flow,
from sticking in vortices to long distance flies in the jets.
\medskip

{F}rom the historical point of view, it is interesting to recall that power law alternatives to more consolidated Gaussian models have always raised concerns and skepticism among more traditional scientists, the first occurrence of this dispute dating back at least to
the first half of the nineteenth century, see Mandelbrot's discussion in~\cite{MR1870013}, and
the specific comment according to which 
science is ``a  search  for insights  and  simple  invariances.    Power laws are consequences of scale invariance. They do not claim to provide a microscopically  precise  multiparameter  representation of everything.  What they aim at is a ``macroscopic'' description  of  reality''.

In fact, well before the L\'evy flight foraging hypothesis, issues related to fat tail distribution and infinite variance were highly debated among experts. Specifically, in the case just mentioned,
Mandelbrot had addressed the well accepted feature  of  financial prices to show an extremely ``anomalous''
variability by casting this phenomenon into the realm of L\'evy's stable distributions. This raised eyebrows,
since, according to~\cite{MR415945}, ``numerous earlier critics are sensible in
 their belief that infinite variance per se is a feature so undesirable that, in order to paper it over,
 the economist should welcome a finite-variance reformulation, even when [...] it is marred by otherwise undesirable features. [...] However, though the use of
 infinite variance distributions was called a radical departure, I don't believe it should continue to arouse emotion''. See also~\cite{43a28768-0b62-3fbb-bd95-562f24ac9127} for related discussions about the infinity of second moments in the approach by L\'evy and Mandelbrot.
\medskip

Let us also mention that the criticalities of the L\'evy flight foraging assumptions may also be part of
the broader discussion concerning the Marginal Value Theorem (often abbreviated as MVT), see~\cite{CHARNOV1976129, ojdlknfei43qonfgn8vBISkdPPOOjmfo}.
In this model, foragers perform a search within a ``patch'' and
preys are located in patches separated by areas with no resources, therefore
the search of food to meet the foragers' energetic needs is weighted by the energetically costly
travel between patches (this philosophy is somewhat also in common with our notion of efficiency functional).
In spite of the interest and the support for the Marginal Value Theorem,
some criticalities and limitations arose in this topic as well,
since it is often difficult to objectively measure payoff rates
and foraging times (especially in unfamiliar environments for the forager),
or to distinguish between the foraging activity and other activities (such as
avoiding predation risks or searching for mating opportunities).
These criticalities have motivated some researchers to state that
 ``the simplifying assumptions of the MVT introduce a systematic bias rather than just imprecision'', see~\cite{a00038109iuygfd12qewret}. See moreover Chapter~9 in~\cite{ojdlknfei43qonfgn8vBISkdPPOOjmfo} for further discussions about limitations and pitfalls in foraging models, also beyond the specific criticalities of the MVT or the L\'evy flight foraging hypothesis.
 So, once again, any research related to animal behavior has not to be taken literally, not as the conclusive answer, and always challenged and improved, to add yet a new bit to our modest knowledge\footnote{See e.g.~\cite{pkwfINAsGAndRiksdfduIS}, recalling ``the  need  for  intellectual  humility  or  being  transparent  about  and  owning  the  limitations  of  our  work.  Although  intellectual  humility  is  presented  as  a  widely  accepted  scientific  norm,  [...] current research practice does not incentivize intellectual humility. [...] We hope that by pushing each other to own the limitations of our work, we can reshape the incentives so that intellectual humility is rewarded and the credibility of science increases.''} of the universe.
\medskip

Let us stress that the interest of L\'evy distributions to optimize search strategies goes well beyond the realm of biology and, in fact, biologically inspired strategies have been adopted for several technological purposes. For example, 
L\'evy flights have found fruitful applications in the study of robot behavior and in the development of computationally effective methods to optimize the movements of robot swarms to cover a given area, find rare targets, or track objects (see~\cite{09540090412331314876, MR4065205, Duncan2022}). 
On a similar note, L\'evy flights have been utilized for optimization algorithms (see~\cite{1508807}) as well as wired networks
and mine detection (see~\cite{0015}). As for the biological case,
L\'evy flights should not be considered as a ``panacea'' and a fine tuning is often required (especially when the targets are clustered,
see~\cite{LFA}).
\medskip

Concerning the above mentioned criticalities, of course this paper does not purport to be the ultimate
answer in such a difficult and controversial topic. Instead, we aim at showing how these complications
actually arise even in very simple situations, thus underlining the importance of
tolerant, but rigorous, approaches, which sometimes cannot avoid precise, and maybe ``pedantic'', mathematical definitions and an explicit set of quantitative and unambiguous hypotheses.

We also remark that, with an abuse of notation, we speak about ``a prey'' or ``a forager'', while of course
solutions of the fractional heat equation under consideration must be considered, strictly speaking,
as ``distribution of preys'' or ``distribution of foragers''. With this respect, one of the advantages
of the approach using a fractional heat equation is that the distinction between individual and
collective behaviors is somewhat implicitly overcome in the model, since the equation is itself an averaged expression and can only produce ``aggregate'' results (with this respect, strictly speaking, even initial conditions corresponding to Dirac's Delta Function should not really be thought as an ``individual'' located at a specific point, but rather as a concentrate distribution of individuals -- but it is common to surf over
this subtle distinction and we will often adopt a simple language to describe the results, relegating the technical part to the mathematical theorems).

Another advantage of the fractional heat equation is that it somehow reduces the number of parameters involved in the analysis. This can also be seen as a limitation, since on many occurrences it can be desirable to include specific biological parameters (such as the sight of the animal, the previous knowledge about the environment, the interaction with other species, the fact that a search pattern could be terminated whenever a prey is found, possible spatial or temporal cutoffs, etc.). We do not aim at neglecting the importance of these parameters, which are oftentimes useful to obtain good fit with experiments, but we wanted to analyze the ``simplest possible'' model
which can already provide some useful information on a difficult and controversial topic.
\medskip

It is maybe worth completing this section by stressing that the theory of L\'evy flights clearly shows
the remarkable unity of different disciplines and goes beyond the rigid barriers which too often artificially separate ``theory'' and ``application''. As a confirmation of this fact we recall the fact mentioned in~\cite{Shlesinger1986}, according to which the original motivation for Paul L\'evy in the 1920's to introduce this kind of distributions was, in a sense, rather theoretical (though obviously motivated by very concrete applications):
at that time the vouge seemed to be that of proving the Central Limit Theorem under the most general possible assumptions,
and the work by L\'evy intended to provide a limitation for such generalizations by looking at random variables with infinite second moments. This approach would have eventually led to a broad and intense research activity involving mathematicians, physicists, biologists, engineers, computer scientists, etc., addressing the topics of anomalous diffusion, fractals, and fractional equations, from different viewpoints and with complementary techniques. And this, after all, is the beauty of science.

\section{Formal statement of the main results}\label{forma}

The mathematical framework that we consider takes into account a distribution of foragers, denoted by~$u(t,x)$, and of preys, denoted by~$p(t,x)$. Here, $x\in\R^n$ denotes the space variable and~$t\in[0,+\infty)$ denotes the time variable.

The efficiency functional accounting for the random encounters of foragers and preys takes the form
\begin{equation}
\label{EFFIFU9z123}
\frac1T\,\int_0^T \int_{\R^n} u(t,x)\,p(t,x)\,dx\,dt.
\end{equation}
The initial densities of foragers and preys are normalized to be probability distributions, i.e.
$$ \int_{\R^n} u(0,x)\,dx=\int_{\R^n} p(0,x)\,dx=1.$$
However, we will not necessarily assume here that these densities are nonnegative (hence, the notion of probability measure has to be intended as a ``signed'' measure\footnote{While
in the foraging problem densities of individuals are naturally nonnegative, our framework is general enough to comprise the situation in which the sign of the quantities involved is not specified, and this may find applications in other circumstances in which ``negative densities'' may indicate ``debts'' or ``gaps''.
{F}rom the technical point of view, this is useful for us when we consider Fourier transforms, see Figure~\ref{nijufLO-06557FTHNSojdwm}.} with unit total mass).

We will focus on the case of {\em stationary prey}~$p(t,x)=p(x)$, i.e.~$p$ independent of time. As for the forager, we will consider two main scenarios.
The first scenario is that of a {\em stationary forager} described by~$u(t,x)=u(x)$. The second consists of a {\em diffusive forager} evolving according to the fractional heat equation
\begin{equation}\label{WWD103023j131672452} \partial_t u=-(-\Delta)^{s} u.\end{equation}
In this setting, for all~$s\in(0,1)$, the operator~$-(-\Delta)^s$ is the {\em fractional Laplacian} defined as
\begin{equation}\label{WWD103023j13167245} -(-\Delta)^s \phi(x):=c_{n,s}\,\int_{\R^n} \frac{\phi(x+y)+\phi(x-y)-2\phi(x)}{|y|^{n+2s}}\,dy,\end{equation}
where~$c_{n,s}$ is a suitable\footnote{The role of this normalization constant is simply to allow a ``nice'' representation
of the fractional Laplacian operator in terms of the Laplace transform. Specifically, with this normalization,
the Fourier transform of~$(-\Delta)^s u(x)$ is simply~$(2\pi |\xi|)^{2s}\,\widehat u(\xi)$, where~$\widehat u$
is the Fourier transform of~$u$, namely
$$\widehat u(\xi)=\int_{\R^n} u(x)\,e^{2\pi ix\cdot\xi}\,dx.$$} normalization constant (specifically,
$c_{n,s}=\frac {2^{2s-1}\Gamma \left(\frac{n}2+s\right)}{\pi ^{n/2}|\Gamma (-s)|}$, where~$\Gamma$ is Euler's Gamma function).
An interesting role of this constant is to allow one to account for the cases~$s=0$ and~$s=1$
as limit situations in~\eqref{WWD103023j13167245} and thus consider~\eqref{WWD103023j131672452} for all~$s\in[0,1]$ in a unified notation. In particular, the case~$s=1$ corresponds to a classical Laplace operator on the right-hand side of~\eqref{WWD103023j131672452}, hence,
\eqref{WWD103023j131672452} for~$s=1$ reduces to the classical heat equation, generated by the standard random walk
(instead, for~$s\in(0,1)$, the infinitesimal generator of~\eqref{WWD103023j131672452} is a L\'evy flight). See also~\cite{MR2707618, MR3967804} for a basic introduction to the fractional Laplacian.

When~$s=0$,
equation~\eqref{WWD103023j131672452} reduces to~$\partial_t u=- u$.
In particular, for a forager initially concentrated at a given point~$y\in\R^n$, the case~$s=0$ produces
\begin{equation*}
\begin{dcases}
\partial_t u(t,x)=- u(t,x)\quad\mbox{for all}\quad (t,x)\in (0,+\infty)\times\mathbb{R}^n , \\
u(0,x)=\delta(y-x),
\end{dcases}
\end{equation*}
where~$\delta$ stands for Dirac's Delta Function. In this setting, the formal solution takes the form \begin{equation}\label{0ojlfe9375777584hgXojfg}
e^{-t}\delta(y-x).\end{equation}
This solution shows an interesting feature of the~$s=0$ case, which is perhaps not completely intuitive:
indeed~\eqref{0ojlfe9375777584hgXojfg} exhibits both a
lack of regularity (the solution is not a smooth function when~$t>0$, since it still presents a Dirac's Delta Function) and loss of mass (the total mass at time~$t$ being equal to~$e^{-t}$, showing that the mass is not preserved in time and suggesting that part of the mass is ``instantaneously'' pushed far away). These mathematical characteristics can be interpreted \label{AMdowfekgherikr0BUS}
as an ``ambush strategy'', in which a forager can remain still and suddenly perform a long jump, see Remark~2.10 in~\cite{ULTI}.
These ambush patterns have also been observed in nature, especially when the prey possesses high energy content,
see~\cite{PHE}, therefore they can also be related to ``high risk/high gain'' strategies.
Namely, this approach of
``risking it all for a big reward'', related to sudden ``surprise attacks'' by the forager striving for a highly advantageous target, is the counterpart of a small L\'evy exponent anomalous diffusion in which the low regularizing effect of the parabolic operator allows both a significant forager density to remain at the initial point and a notable part of this density to drift away.
\medskip

A natural question in foraging strategies is whether stationary foragers
would better fit stationary preys with uniform distributions sharing the same region.
We give a rigorous positive answer to this problem, according to the following result:

\begin{thm}\label{gbearacdf43s0}
Consider a stationary prey uniformly distributed in~$\Omega_1\subset\R^n$, i.e.
$$p(x)=\frac{\chi_{\Omega_1}(x)}{\left|\Omega_1\right|},$$
and suppose that~$\Omega_1\subset \Omega_2$.

Then, the efficiency functional associated with a stationary
forager uniformly distributed in~$\Omega_2$, i.e.
\begin{equation}\label{thesta}u(x)=\frac{\chi_{\Omega_2}(x)}{\left| \Omega_2\right|},\end{equation} is larger than that associated with a diffusive forager satisfying 
\begin{equation}\label{the}
\begin{cases}
\partial_t u(t,x)=-(-\Delta)^s u(t,x)\quad\mbox{for all}\quad (t,x)\in (0,+\infty)\times \R^n,\\
u(0,x)=\frac{\chi_{\Omega_2}(x)}{\left|\Omega_2 \right|},
\end{cases}
\end{equation}for any value of~$s\in[0,1]$.
\end{thm}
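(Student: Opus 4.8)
The plan is to show that the stationary forager of~\eqref{thesta} realizes the efficiency value exactly $\frac1{|\Omega_2|}$, and that no diffusive forager of~\eqref{the} can reach this value, the gap being strict for every $s\in[0,1]$. For the stationary strategy, both $p$ and $u$ are time-independent, so the time average in~\eqref{EFFIFU9z123} is vacuous; using the inclusion $\Omega_1\subset\Omega_2$, which gives $\chi_{\Omega_1}\chi_{\Omega_2}=\chi_{\Omega_1}$, one computes
\[
\int_{\R^n}\frac{\chi_{\Omega_2}(x)}{|\Omega_2|}\,\frac{\chi_{\Omega_1}(x)}{|\Omega_1|}\,dx
=\frac{1}{|\Omega_2|\,|\Omega_1|}\int_{\Omega_1}dx=\frac{1}{|\Omega_2|}.
\]
This is the benchmark that the diffusive strategy must beat.

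For the diffusive forager, the key structural fact I would invoke is that the fractional heat flow does not increase the sup-norm of the initial datum. Writing the solution of~\eqref{the} as the convolution $u(t,\cdot)=P^s_t*u_0$ with the fractional heat kernel $P^s_t$ (the Gaussian kernel when $s=1$; for $s=0$ equation~\eqref{the} collapses to $\partial_t u=-u$, so that $u(t,x)=e^{-t}u_0(x)$ as in~\eqref{0ojlfe9375777584hgXojfg}), and recalling that $P^s_t\ge0$ with $\int_{\R^n}P^s_t=1$, one gets the pointwise bound $u(t,x)\le\|u_0\|_{L^\infty(\R^n)}=\frac1{|\Omega_2|}$. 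Since here the prey is nonnegative and supported in $\Omega_1$, the diffusive efficiency then obeys
\[
\frac1T\int_0^T\!\!\int_{\R^n}u(t,x)\,\frac{\chi_{\Omega_1}(x)}{|\Omega_1|}\,dx\,dt
=\frac1T\int_0^T\frac{1}{|\Omega_1|}\int_{\Omega_1}u(t,x)\,dx\,dt\le\frac{1}{|\Omega_2|}.
\]

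To upgrade this $\le$ to the strict inequality asserted in the statement, I would exploit the strict positivity of the kernel. For $s\in(0,1]$ the density $P^s_t$ is strictly positive on all of $\R^n$ for every $t>0$ (the Gaussian for $s=1$, and the $2s$-stable transition density, which has full support, for $s\in(0,1)$); since $u_0<\frac1{|\Omega_2|}$ on $\R^n\setminus\Omega_2$, a set of positive measure on which $P^s_t(x-\cdot)>0$, the convolution satisfies $u(t,x)<\frac1{|\Omega_2|}$ for all $t>0$ and all $x$. For $s=0$ the explicit solution gives $u(t,x)=e^{-t}/|\Omega_2|<1/|\Omega_2|$ on $\Omega_1$ for $t>0$. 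Integrating the resulting pointwise strict inequality over the positive-measure set $(0,T)\times\Omega_1$ turns the displayed bound into a strict one, which is the claim.

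I expect the only delicate point to be precisely the justification of the \emph{strict} gap in the nonlocal regime $s\in(0,1)$: the elementary maximum principle (or the stochastic/sub-Markovian interpretation of the semigroup) only delivers the weak inequality $\le$, and securing strictness requires the everywhere-positivity of the stable kernel, together with the separate and slightly degenerate treatment of the endpoints $s=0$ and $s=1$, so as to obtain a conclusion valid uniformly across the whole range $s\in[0,1]$.
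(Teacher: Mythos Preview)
Your argument is correct and follows essentially the same route as the paper: compute the stationary efficiency as $1/|\Omega_2|$ via the inclusion $\Omega_1\subset\Omega_2$, then bound the diffusive efficiency by the same quantity using that the fractional heat kernel is a nonnegative probability density (which the paper phrases as ``the Maximum Principle''), so that $\int_{\Omega_2}G^s(t,x,y)\,dy\le1$. The only difference is that the paper stops at the weak inequality, whereas you go on to secure strictness from the everywhere-positivity of the stable kernel (and the explicit $e^{-t}$ decay at $s=0$); this extra step is sound and indeed sharpens the conclusion beyond what the paper's own proof delivers.
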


Here above and in the rest of this paper, we are adopting the standard notation for the indicator function of a set, namely
$$\chi_A(x):=\begin{cases}1&{\mbox{ if $x\in A$,}}\\0&{\mbox{otherwise.}}\end{cases}$$

We stress that Theorem~\ref{gbearacdf43s0} must not be taken literally in a biological context,
since one does not expect\footnote{There are however many examples of sit-and-wait predators in nature.
Some, such as the striated frogfish and the orchid mantis, use camouflage to attract prey, others, such as web-spinning spiders, build traps, others, such as the chameleons, use their abilities and special biological features (e.g., an exceptional tongue) to accomplish their goals. See e.g. Chapter~5 in~\cite{curio2012ethology} for further details.}
passive foragers waiting for a stationary prey to arrive.
The fact is that Theorem~\ref{gbearacdf43s0} describes an optimality
``at the group level'', dealing with distributions of foragers and preys,
not at individual level. The disparity of these approaches sits in the fact that
selection pressures seem not to operate at the group (i.e., population) level but rather
at the individual (i.e., genetic) level, making ``dynamic'' strategies more advantageous for single individuals.
Besides, as mentioned in Section~\ref{DIAsdff}, the search for food is not the unique pressure for an animal and other needs, such as avoiding predation risks or searching for mating opportunities, can favor movement and penalize stationary strategies. 
Also, Theorem~\ref{gbearacdf43s0} assumes the locations to be
revisitable and the prey to be instantaneously replaceable by a new prey, which is certainly an idealized situation (the scarcity of the remaining food after depleting the resource being otherwise an incentive for the forager to leave the patch).
In any case, results such as Theorem~\ref{gbearacdf43s0} only focus on the time of the forager dedicated to the search of food, not on the entire lifespan of the forager.
\medskip

In spite of these limitations, Theorem~\ref{gbearacdf43s0} provides a useful ``sanity check'' to attest the coherence of the model.
Also,
Theorem~\ref{gbearacdf43s0} raises the natural question of what the best diffusive forager is
(this is also a relevant biological question, since, as we have already pointed out, some biological foragers cannot be stationary and therefore cannot reach the optimal strategy of Theorem~\ref{gbearacdf43s0}).
In the next result we prove that the best diffusive search strategy (in terms of the most appropriate choice of~$s$) heavily depends on the size of the domain.

Notice indeed that the analysis of optimal foraging strategies in the class of fractional dispersal remains interesting and possibly nontrivial even when the forager and the prey share the same initial distribution, since,
even in the case of a stationary prey, the forager cannot be stationary and one has still
to detect the optimal exponent~$s$ for this type of search.

In this framework, one could expect, on the one hand, the exponent~$s=0$ to provide some kind of optimality, since, essentially, the distribution of the forager would remain qualitatively unchanged,
in view of~\eqref{0ojlfe9375777584hgXojfg},
yet some mass of the foragers is instantaneously delivered towards infinity.
On the other hand, one could also expect~$s=1$ to give some chance of optimality, since this would somewhat correspond to short Gaussian excursions. These considerations suggest that the optimality problem of stationary preys and fractional dispersive foragers with the same initial distributions is interesting and can reserve some surprises.

To clarify our result, given a bounded region~$\Omega\subset\R^n$,
we consider a stationary prey uniformly distributed in~$\Omega$, with corresponding distribution
$$p_\Omega(x):=\frac{\chi_{\Omega}(x)}{\left|\Omega\right|},$$
and a diffusive forager whose initial configuration is also uniformly distributed in~$\Omega$.
More explicitly, given~$s\in[0,1]$, the distribution of such a forager corresponds to the solution of
\begin{equation}\label{16BIS}
\begin{cases}
\partial_t u(t,x)=-(-\Delta)^s u(t,x)\quad\mbox{for all}\quad (t,x)\in (0,+\infty)\times \R^n,\\
u(0,x)=\frac{\chi_{\Omega}(x)}{\left|\Omega \right|},
\end{cases}
\end{equation}
and we denote by~$u_{s,\Omega}$ this solution. Thus, recalling~\eqref{EFFIFU9z123}, given
a time duration~$T>0$,
we consider the associated efficiency functional
\begin{equation}\label{16BIS-02ojf3egn} {\mathcal{J}}^{\Omega}(s,T):=
\frac1T\,\int_0^T \int_{\R^n} u_{s,\Omega}(t,x)\,p_\Omega(x)\,dx\,dt.
\end{equation}
Given~$r>0$, we also consider the dilation by a factor~$r$ of the region~$\Omega$, namely
$$\Omega_r:=\big\lbrace rx {\mbox{ s.t. }}x\in \Omega \big\rbrace.$$

Our result states that for each region~$\Omega$ we can find some time span $T\in (0,+\infty)$ of the search and a dilation~$\Omega_r$  for some large $r\in (1,+\infty)$, such that ${\mathcal{J}}^{\Omega_r}(\cdot,T)$ is strictly increasing. The biological intuition for this result is that short range dispersal strategies maintain
the foragers close to the prey, with a possible error localized near the boundary of the region, and a large dilation
makes this boundary region more negligible with respect to the bulk of the domain: consequently,
for large regions, in which the boundary effect becomes negligible with respect to the core of the domain, the classical Brownian motion becomes  optimal.

Moreover, we show that by considering instead  a small dilation $\Omega_r$, for some $r\in (0,1)$, then~${\mathcal{J}}^{\Omega_r}(\cdot,T)$ is strictly decreasing.
The biological intuition for this being that, if the distribution of prey occurs in a very small region, then any dispersal strategy would move the forager away from the prey, making it more convenient for the seeker to
adopt an extreme strategy which maintains it, as much as possible, at the initial position, possibly at the risk of being quickly
pushed towards infinity, since in any case small displacement would be sufficient to miss the prey.

The precise result needs to take into account the time span of the search and goes as follows:

\begin{thm}\label{kkjyyeffcdf5128}
Let $\Omega\subset \R^n$ be bounded, measurable and strictly convex. Then, there exists some~$r_{\Omega}\in (1,+\infty)$ such that
\begin{equation}\label{hnbg512dscxpae}
{\mathcal{J}}^{\Omega_r}(\cdot,T)\quad\mbox{is strictly increasing in $s\in (0,1)$,}
\end{equation} 
for each $r\in (r_{\Omega},+\infty)$ and  $T\in [r^2,+\infty)$.

Furthermore,  for each $T\in (0,+\infty)$ there exists some $r_{\Omega,T}\in (0,1)$ such that
\begin{equation}\label{gnapegcnr30d}
{\mathcal{J}}^{\Omega_r}(\cdot,T)\quad\mbox{is strictly decreasing in $s\in \left(0,\frac{n}{2}\right]\cap(0,1)$,}
\end{equation} 
for each $r\in (0,r_{\Omega,T})$. 

Also, if $n=1$, for each $\sigma\in \left(\frac{1}{2},1\right)$ there exist some $T_{\sigma}\in (1,+\infty)$ and $\tilde{r}_{\Omega}\in (0,1)$ such that 
\begin{equation}\label{fverd..34}
{\mathcal{J}}^{\Omega_r}(\cdot,T)\quad\mbox{is strictly increasing in $s\in \left(\sigma,1\right)$,}
\end{equation} 
for each $T\in (T_{\sigma},+\infty)$ and $r\in (0,\tilde{r}_{\Omega})$.
\end{thm}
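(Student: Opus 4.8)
The plan is to pass to the Fourier side, where the fractional heat flow diagonalizes, and then read off the monotonicity in $s$ from the way the frequency content of $p_{\Omega_r}$ is distributed relative to the critical sphere $\{|\xi|=1/(2\pi)\}$. First I would record the basic representation. Since the forager and the prey start from the \emph{same} datum $p_{\Omega_r}=\chi_{\Omega_r}/|\Omega_r|$, and the semigroup generated by $-(-\Delta)^s$ acts on the Fourier transform as multiplication by $e^{-t(2\pi|\xi|)^{2s}}$, Parseval's identity gives
\[
\int_{\R^n} u_{s,\Omega_r}(t,x)\,p_{\Omega_r}(x)\,dx=\int_{\R^n} e^{-t(2\pi|\xi|)^{2s}}\,|\widehat{p_{\Omega_r}}(\xi)|^2\,d\xi .
\]
Integrating in $t$ and setting $g(\tau):=\frac{1-e^{-\tau}}{\tau}$ (with $g(0)=1$), this yields
\[
{\mathcal J}^{\Omega_r}(s,T)=\int_{\R^n} g\big(T(2\pi|\xi|)^{2s}\big)\,|\widehat{p_{\Omega_r}}(\xi)|^2\,d\xi ,
\qquad |\widehat{p_{\Omega_r}}(\xi)|^2=|\widehat{p_{\Omega}}(r\xi)|^2 ,
\]
the last identity coming from the dilation $\Omega_r=r\Omega$.

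Next I would differentiate in $s$. The only $s$-dependence sits in $(2\pi|\xi|)^{2s}=e^{2s\log(2\pi|\xi|)}$; since $g$ is strictly decreasing, a direct computation gives
\[
\partial_s{\mathcal J}^{\Omega_r}(s,T)=-2\int_{\R^n}\log(2\pi|\xi|)\,h\big(T(2\pi|\xi|)^{2s}\big)\,|\widehat{p_{\Omega_r}}(\xi)|^2\,d\xi ,
\qquad h(\tau):=-\tau g'(\tau)=\frac{1-(\tau+1)e^{-\tau}}{\tau}>0 .
\]
Thus the sign of $\partial_s{\mathcal J}$ is governed by whether the positive measure $h(\cdots)|\widehat{p_{\Omega_r}}|^2$ charges the low frequencies $\{|\xi|<1/(2\pi)\}$, where $\log(2\pi|\xi|)<0$ (pushing towards \emph{increasing}), or the high frequencies, where $\log(2\pi|\xi|)>0$ (pushing towards \emph{decreasing}). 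After the change of variables $\eta=r\xi$, monotonicity reduces to comparing the two quantities
\[
A_s(r,T):=\int_{\R^n}\log(2\pi|\eta|)\,h(x)\,|\widehat{p_\Omega}(\eta)|^2\,d\eta ,
\qquad
B_s(r,T):=\int_{\R^n}h(x)\,|\widehat{p_\Omega}(\eta)|^2\,d\eta>0 ,
\]
with $x=T(2\pi|\eta|/r)^{2s}$, since $\partial_s{\mathcal J}^{\Omega_r}(s,T)=-2r^{-n}\big(A_s(r,T)-(\log r)\,B_s(r,T)\big)$.

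The heart of the matter is then to locate the frequency scale on which $h(x)|\widehat{p_\Omega}(\eta)|^2$ concentrates, using $h(\tau)\sim\tau/2$ as $\tau\to0$ and $h(\tau)\sim\tau^{-1}$ as $\tau\to+\infty$. This is exactly where the dimension enters: since $|\widehat{p_\Omega}(\eta)|^2\to1$ as $\eta\to0$, the integral $\int|\eta|^{-2s}|\widehat{p_\Omega}(\eta)|^2\,d\eta$ converges near the origin if and only if $2s<n$. When $2s<n$ the mass of $h(x)|\widehat{p_\Omega}|^2$ sits at $|\eta|\sim1$, so its $\log(2\pi|\eta|)$-average is $O(1)$; when $2s>n$ it concentrates at $|\eta|\sim\eta_\ast:=\tfrac{r}{2\pi}(c/T)^{1/(2s)}\to0$, where that average is $\approx\log r-\tfrac1{2s}\log T$. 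Feeding this into the comparison $A_s\lessgtr(\log r)B_s$, I would argue:
\begin{description}
\item[\eqref{hnbg512dscxpae}] For $r$ large and $T\ge r^2$ both $\log r\to+\infty$ and $T\to+\infty$; when $2s<n$ the term $(\log r)B_s$ dominates $A_s=O(B_s)$, while when $2s>n$ the average acquires the strictly negative correction $-\tfrac1{2s}\log T$. In either case $A_s<(\log r)B_s$ and $\partial_s{\mathcal J}>0$, uniformly in $s\in(0,1)$.
\item[\eqref{gnapegcnr30d}] For $s\le n/2$ (so $2s<n$) one has $A_s\sim\frac{r^{2s}}{T}D_s$ and $B_s\sim\frac{r^{2s}}{T}C_s$ with $C_s,D_s$ finite, whence $A_s-(\log r)B_s\sim\frac{r^{2s}}{T}(D_s-C_s\log r)>0$ as $r\to0^+$, giving $\partial_s{\mathcal J}<0$ (the borderline $s=n/2$ handled separately through the logarithmic divergence at the origin).
\item[\eqref{fverd..34}] For $n=1$ and $s>\sigma>1/2=n/2$ (so $2s>1$) the mass sits at $\eta_\ast\to0$, the $\log$-average is $\approx\log r-\tfrac1{2s}\log T$, hence $A_s-(\log r)B_s\approx-\tfrac1{2s}(\log T)\,B_s<0$ once $T>T_\sigma$, giving $\partial_s{\mathcal J}>0$.
\end{description}

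Finally I would supply the analytic underpinnings: differentiation under the integral sign (by dominated convergence on compact $s$-intervals), the convergence of $A_s,B_s$, and the matching two-sided bounds needed above. Here the strict convexity of $\Omega$ enters, through a decay estimate of the form $|\widehat{\chi_\Omega}(\eta)|\le C(1+|\eta|)^{-(n+1)/2}$ furnished by the nonvanishing curvature of $\partial\Omega$ (stationary phase), which controls the high-frequency tails — most delicately as $s\to0^+$, where the gain $h(x)\sim x^{-1}\sim|\eta|^{-2s}$ degenerates — and which, together with $|\widehat{p_\Omega}(0)|=1$, pins the integrability threshold precisely at $2s=n$. I expect the main obstacle to be exactly this step: turning the ``center of mass'' heuristic into rigorous bounds for $A_s$ and $B_s$ that are uniform on the relevant ranges of $s$ and that survive the crossover $2s\approx n$ (and the borderline $s=n/2$), so that the sign of $A_s-(\log r)B_s$ can be decided unconditionally in each of the three regimes.
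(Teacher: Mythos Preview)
Your proposal is essentially the paper's argument: pass to the Fourier side, write ${\mathcal J}$ through the function $g(\tau)=(1-e^{-\tau})/\tau$ (the paper's $\sigma$), differentiate in $s$ via $h(\tau)=-\tau g'(\tau)$ (the paper's $\theta$), and decide the sign of $\partial_s{\mathcal J}$ by weighing low frequencies against high frequencies. The paper organizes this as a split $I+II$ at the sphere $\{|\xi|=r/(2\pi)\}$ and estimates each piece directly, while you recast it as $A_s-(\log r)B_s$; these are the same comparison in different bookkeeping, and the asymptotics $h(\tau)\sim\tau/2$ near $0$ and $h(\tau)\sim1/\tau$ at infinity that you single out are exactly the bounds $\theta(x)\le \widehat C x$ and $\tfrac1{2x}\le\theta(x)\le\tfrac1x$ the paper uses.

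One technical point to flag: the pointwise decay $|\widehat{\chi_\Omega}(\eta)|\le C(1+|\eta|)^{-(n+1)/2}$ from stationary phase requires a $C^2$ boundary with nonvanishing Gaussian curvature, which is strictly more than ``bounded, measurable and strictly convex'' in the hypothesis. The paper sidesteps this by appealing to Herz's theorem, which for a convex body gives the \emph{averaged} bound $\int_{B_R\setminus B_{R/2}}|\xi|^2|\widehat{\chi_\Omega}(\xi)|^2\,d\xi\le CR$ on dyadic annuli; this $L^2$ estimate is all that is needed to control the high-frequency tails (in particular for the term $II$ and the logarithmic integral in your $A_s$), and it is what the stated hypothesis actually supports. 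If you keep your organization, you should replace the stationary-phase input by Herz's estimate.
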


Some comments about Theorem~\ref{kkjyyeffcdf5128} are in order. First of all, the claim in~\eqref{hnbg512dscxpae} holds true in every dimension and states, roughly speaking,
that, in case of large domains and very long searching times, the forager strategy is optimized by the Gaussian dispersal: this is consistent with the idea that very long times combined with
L\'evy flights at some point would make the forager jump out of the domain.

The claim in~\eqref{gnapegcnr30d} also holds true in every dimension, but its interpretation
varies according to the cases~$n=1$ and~$n\ge2$. When~$n\ge2$, the claim in~\eqref{gnapegcnr30d} boils down to
$$ {\mathcal{J}}^{\Omega_r}(\cdot,T)\quad\mbox{is strictly decreasing in $s\in(0,1)$,}$$
for each $r\in (0,r_{\Omega,T})$, saying that given any prescribed searching time, if the domain is sufficiently small, then the optimal strategy is~$s=0$. This is in agreement with the idea that the exponent~$s=0$ tends to keep a significant portion of the mass close to the original position.

However, the exponent~$s=0$ also tends to send a significant portion of the mass towards infinity, therefore these types of statements do require a careful analysis, and indeed the situation in dimension~$1$ is different. Indeed, when~$n=1$ the claim in~\eqref{gnapegcnr30d} becomes
\begin{equation*}
{\mathcal{J}}^{\Omega_r}(\cdot,T)\quad\mbox{is strictly decreasing in $s\in \left(0,\frac{1}{2}\right]$,}
\end{equation*} 
for each $r\in (0,r_{\Omega,T})$, giving that~$s=0$ outperforms every other strategy~$s\in\left(0,\frac12\right]$, leaving however the possibilities that other strategies~$s\in\left(\frac12,1\right]$
may be optimizers (or at least local optimizers). The role played by~$s=\frac12$ in this case is possibly related to the transient/recurrent phenomena of the L\'evy flights (see Section~2 in~\cite{MR4294710}). Specifically, when~$n\ge2$, and when~$n=1$ and~$s\in\left(0,\frac12\right)$, the L\'evy flight drifts away from the initial location, without coming back: in this setting, the foragers' mass sent out of the domain is not expected to return
and therefore keeping a portion of the mass at the original location becomes particularly efficient (thus favoring the strategy~$s=0$).

Instead, when~$n=1$ and~$s\in\left[\frac12,1\right]$, part of the mass sent out will eventually come back arbitrarily close to the original location. This effect is of course important when the searching time is sufficiently long, and this is the content of~\eqref{fverd..34}, which implies that
long searching times and small domains may produce~$s=1$ as a local optimizer (possibly because
some preys is found by the forager after exiting and reentering the domain).

We also remark that~\eqref{fverd..34} gives that the claim in~\eqref{gnapegcnr30d} is sharp,
since the intersection with the interval~$\left(0,\frac{n}{2}\right]$ cannot be dropped
(hence, the one-dimensional case is significantly different from the others).\medskip

It is certainly worth investigating deeper the special case~$n=1$ to further understand the differences
with respect to the higher-dimensional cases. For example, in the one-dimensional case, due to the lack of monotonicity
in the efficiency functional,
one can also look at pessimizers for the foraging strategies. In this context,
it follows from a careful analysis of~\eqref{gnapegcnr30d} and~\eqref{fverd..34} that in the one-dimensional case the functional $\mathcal{J}^{\Omega_r}(\cdot,T)$ develops a global minimizer in a neighbourhood of $s=\frac{1}{2}$, for $r$ small enough and $T$ large enough. More precisely, we can state the following result.

\begin{cor}\label{kniredl998}
Let $n=1$ and $\Omega$ be some bounded interval. Then, there exist some $T_{\Omega}\in (1,+\infty)$ and $r_{\Omega}\in (0,1)$  such that for each $r\in (0,r_{\Omega})$, $T\in (T_{\Omega},+\infty)$ and some $\sigma_T\in \left[\frac{1}{2},1\right)$
\begin{equation}\label{hbre4352618}
\mathcal{J}^{\Omega_r}(\cdot,T)\,\mbox{ has a global minimizer in the interval }\, s\in \left[\frac{1}{2},\sigma_{T}\right].
\end{equation}

Furthermore, it holds that 
\begin{equation}\label{gllpldmtr}
\lim_{T\to +\infty} \sigma_{T}=\frac{1}{2}.
\end{equation} 
\end{cor}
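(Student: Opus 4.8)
The plan is to deduce the corollary by stitching together the two one-dimensional monotonicity regimes already furnished by Theorem~\ref{kkjyyeffcdf5128}: the strictly decreasing behaviour on the left, coming from~\eqref{gnapegcnr30d} with $n=1$ (i.e.\ on $\left(0,\frac12\right]$), and the strictly increasing behaviour on the right, coming from~\eqref{fverd..34} (on $(\sigma,1)$). A bounded interval is strictly convex, so both statements apply. The geometric idea is that these two opposite monotonicities trap every global minimizer of $\mathcal{J}^{\Omega_r}(\cdot,T)$ inside the sandwiched window $\left[\frac12,\sigma\right]$, and that, as $T$ grows, the left endpoint $\sigma$ of the increasing regime can be pushed down towards $\frac12$, which will produce~\eqref{gllpldmtr}.

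First I would record the continuity of $s\mapsto\mathcal{J}^{\Omega_r}(s,T)$ on $(0,1)$ (and up to the endpoints), which follows from the continuous dependence on $s$ of the fractional heat semigroup generated by~\eqref{16BIS}; this guarantees that the infimum over the compact window $\left[\frac12,\sigma\right]$ is attained, so that speaking of a ``global minimizer'' is meaningful. Next, fix $\sigma\in\left(\frac12,1\right)$ and let $T_\sigma$ and $\tilde r_\Omega$ be as in~\eqref{fverd..34}. For $T>T_\sigma$ and $r<\tilde r_\Omega$, the map is strictly increasing on $(\sigma,1)$, so $\mathcal{J}^{\Omega_r}(\sigma,T)\le\mathcal{J}^{\Omega_r}(s,T)$ for all $s\in(\sigma,1)$ and no minimizer lies in $(\sigma,1)$. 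Simultaneously, by~\eqref{gnapegcnr30d} with $n=1$, for $r$ below the corresponding threshold the map is strictly decreasing on $\left(0,\frac12\right]$, whence $\mathcal{J}^{\Omega_r}\!\left(\frac12,T\right)\le\mathcal{J}^{\Omega_r}(s,T)$ for all $s\in\left(0,\frac12\right)$ and no minimizer lies in $\left(0,\frac12\right)$. Combining these two exclusions, the global minimizer over $s\in(0,1)$ must belong to $\left[\frac12,\sigma\right]$, and setting $\sigma_T:=\sigma$ proves~\eqref{hbre4352618}.

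For~\eqref{gllpldmtr} I would quantify the previous step in $\sigma$. Given $\varepsilon>0$, apply the increasing regime with $\sigma=\frac12+\varepsilon$, obtaining a threshold $T_{\frac12+\varepsilon}$ so that for all $T>T_{\frac12+\varepsilon}$ the global minimizer lies in $\left[\frac12,\frac12+\varepsilon\right]$; one may then take $\sigma_T\le\frac12+\varepsilon$. Since $\varepsilon>0$ is arbitrary and always $\sigma_T\ge\frac12$, this yields $\limsup_{T\to+\infty}\sigma_T\le\frac12$, hence $\lim_{T\to+\infty}\sigma_T=\frac12$. Concretely one can set $\sigma_T$ to be (a value slightly above) $\inf\left\{\sigma\in\left(\tfrac12,1\right):T_\sigma<T\right\}$, which is well defined and tends to $\frac12$ once one checks that $T_\sigma<+\infty$ for every $\sigma>\frac12$.

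The main obstacle is the parameter bookkeeping needed to produce a \emph{single} $r_\Omega$ and a \emph{single} $T_\Omega$ valid simultaneously for all admissible $r$ and $T$. The increasing regime already supplies a radius $\tilde r_\Omega$ independent of $\sigma$, but the decreasing regime~\eqref{gnapegcnr30d} furnishes a radius $r_{\Omega,T}$ that a priori depends on $T$, and one must rule out that it degenerates to $0$ as $T\to+\infty$. I would handle this through the scaling identity $\mathcal{J}^{\Omega_r}(s,T)=r^{-n}\,\mathcal{J}^{\Omega}\!\left(s,\,T r^{-2s}\right)$, obtained from the scale invariance of~\eqref{16BIS}, together with the fact that $T\mapsto\mathcal{J}^{\Omega}(s,T)$ is decreasing, being the running time-average of the decreasing overlap $\int_{\R^n} u_{s,\Omega}(t,\cdot)\,p_\Omega$. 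These two facts reduce the sign of $\partial_s\mathcal{J}^{\Omega_r}$ on $\left(0,\frac12\right]$ to a comparison whose smallness requirement on $r$ can be taken uniform once $T$ exceeds a fixed threshold. Having secured this uniformity, one sets $r_\Omega:=\tilde r_\Omega\wedge r^{\ast}_\Omega$ and lets $T_\Omega$ be the threshold from the increasing regime at a fixed reference $\sigma_0$, so that both monotonicities hold on the required ranges at once.
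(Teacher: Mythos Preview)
Your strategy---sandwich the global minimizer between the decreasing regime on $\left(0,\tfrac12\right]$ from~\eqref{gnapegcnr30d} and the increasing regime on $(\sigma,1)$ from~\eqref{fverd..34}---is exactly the paper's. The one substantive difference is in how $\sigma_T$ is produced. The paper does not work from the black-box statement~\eqref{fverd..34} and then ``invert'' $\sigma\mapsto T_\sigma$; instead it reaches back into the proof of Theorem~\ref{kkjyyeffcdf5128} and uses the quantitative lower bound~\eqref{ooorefdvcte}, namely
\[
\partial_s\mathcal{J}^{\Omega_r}(s,T)\;\ge\;\frac{1}{T}\left(C\ln\!\left(\frac{T}{4}\right)-\frac{|\Omega|^2}{\pi(2\sigma-1)^2}\right)\qquad\text{for }r\in(0,\tilde r_\Omega),\ T>4,\ s\in(\sigma,1),
\]
from which one simply \emph{solves} for the threshold, setting $\sigma_T:=\tfrac12+\tfrac{|\Omega|}{2\sqrt{C\pi\ln(T/4)}}$. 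This gives~\eqref{gllpldmtr} by inspection and avoids any discussion of whether $\sigma\mapsto T_\sigma$ is invertible. Your route works too, but is less direct.

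On the ``main obstacle'' you flag: you are right that~\eqref{gnapegcnr30d} only supplies a $T$-dependent radius $r_{\Omega,T}$, and the estimate in Appendix~\ref{APKmddu0oj4g034JLA} in fact forces $r_{\Omega,T}\to 0$ as $T\to+\infty$ (for $n=1$, $s=\tfrac12$ one needs roughly $\tilde C\,T^2<\ln(\pi/r)$). The paper's proof simply writes ``when $r>0$ is small enough'' and does not address this uniformity either, so you are not being asked to do more than the paper does. That said, your proposed fix via the scaling identity and the monotonicity of $T\mapsto\mathcal{J}^\Omega(s,T)$ is not carried through: after differentiating $\mathcal{J}^{\Omega_r}(s,T)=r^{-1}\mathcal{J}^{\Omega}(s,Tr^{-2s})$ in $s$ you pick up \emph{two} terms, and controlling the sign of $\partial_s\mathcal{J}^{\Omega}(s,\tau)$ at the rescaled time $\tau=Tr^{-2s}$ (which grows with $T$) is precisely the issue, so this sketch does not yet close the gap.
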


In the one-dimensional case, for each $s\in \left(0,\frac{1}{2}\right)$, we will consider the functional\footnote{The right-hand side of~\eqref{knrec53vdo97t} diverges when~$s\ge\frac12$ and this is the reason for us to restrict our attention to
the range~$s\in \left(0,\frac{1}{2}\right)$ in this case.}
\begin{equation}\label{knrec53vdo97t}
\mathcal{J}_\infty^{\Omega}(r,s):= r^{2s-1}\int_{\R}\frac{\left|\widehat{\chi}_{\Omega}(\xi) \right|^2}{(2\pi \left|\xi \right|)^{2s}}\,d\xi.
\end{equation}
As stated in \eqref{lmoetcbt5432} here below, this functional corresponds to the limit for the time span $T$ diverging to infinity of the functional $T\mathcal{J}$, therefore it is interesting to consider this functional as a proxy for the {\em renormalized efficiency
of a very long search}.

We establish that $\mathcal{J}_\infty^{\Omega}(r,\cdot)$ is strictly increasing for each dilation of parameter $r$ large enough. Furthermore, we also prove that for every subset $(0,\sigma)\subset \left(0,\frac{1}{2}\right)$ there exists some $r_{\sigma,\Omega} \in (0,1)$ such that the functional $\mathcal{J}_\infty^{\Omega}(r,\cdot)$ is strictly decreasing in $(0,\sigma)$ for each $r\in (0,r_{\sigma,\Omega})$. To conclude, we also show that if $\sigma\nearrow \frac{1}{2}$, then the parameter $r_{\sigma,\Omega}$ converges to zero.

\begin{cor}\label{pfff}
Let $\Omega\subset \R$ be some bounded interval and $\mathcal{J}_\infty^{\Omega}(r,\cdot)$ be given as in equation~\eqref{knrec53vdo97t}. Then, for each $s\in \left(0,\frac{1}{2}\right)$ and $r\in (0,+\infty)$ it holds that
\begin{equation}\label{lmoetcbt5432}
\lim_{T\to +\infty} T \mathcal{J}^{\Omega_r}(s,T)=\mathcal{J}_\infty^{\Omega}(r,s).
\end{equation} 
Also, there exists some $r_\Omega\in (0,+\infty)$ such that, for each $r\in (r_{\Omega},+\infty)$,
\begin{equation}\label{hbcyevco87tb}
\mathcal{J}_\infty^{\Omega}(r,\cdot)\,\mbox{ is strictly increasing in the interval }\, s\in \left(0,\frac{1}{2}\right).
\end{equation}

Furthermore, for each $\sigma \in \left(0,\frac{1}{2}\right)$, there exists $r_{\sigma,\Omega}\in (0,+\infty)$,
with
\begin{equation}\label{ukybgiygiy}
\lim_{\sigma\nearrow \frac{1}{2}} r_{\sigma,\Omega} =0
\end{equation}
and such that,
\begin{equation}\label{uytvuytvuytvfruytf}\begin{split}&
{\mbox{for each $r\in (0,r_{\sigma,\Omega})$, the function }}\,
\mathcal{J}_\infty^{\Omega}(r,\cdot)\\ &\mbox{ is strictly decreasing in the interval}\;s\in \left(0,\sigma\right)\end{split}
\end{equation}
and
\begin{equation}\label{uytvuytvuytvfruytf2}\begin{split}&
{\mbox{for each $r\in (r_{\sigma,\Omega},+\infty)$, the function }}\,
\mathcal{J}_\infty^{\Omega}(r,\cdot)\\&\mbox{cannot be decreasing in the interval}\;s\in \left(0,\sigma\right).\end{split}
\end{equation}
\end{cor}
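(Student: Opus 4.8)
The plan is to reduce the entire statement to the analysis of the single scalar function
$$ I(s):=\int_{\R}\frac{|\widehat{\chi}_{\Omega}(\xi)|^2}{(2\pi|\xi|)^{2s}}\,d\xi,\qquad s\in\left(0,\tfrac12\right),$$
so that, by~\eqref{knrec53vdo97t}, $\mathcal{J}_\infty^{\Omega}(r,s)=r^{2s-1}\,I(s)$. First I would establish the limit formula~\eqref{lmoetcbt5432}. Solving~\eqref{16BIS} on the Fourier side gives $\widehat{u}_{s,\Omega_r}(t,\xi)=e^{-(2\pi|\xi|)^{2s}t}\,\widehat{\chi}_{\Omega_r}(\xi)/|\Omega_r|$, so by Plancherel the inner integral in~\eqref{16BIS-02ojf3egn} equals $\int_{\R}e^{-(2\pi|\xi|)^{2s}t}\,|\widehat{\chi}_{\Omega_r}(\xi)|^2/|\Omega_r|^2\,d\xi$. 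Integrating in $t\in(0,T)$, using $\widehat{\chi}_{\Omega_r}(\xi)=r\,\widehat{\chi}_{\Omega}(r\xi)$ and $|\Omega_r|=r|\Omega|$, and performing the change of variables $\eta=r\xi$, I would obtain (up to the fixed normalization built into~\eqref{knrec53vdo97t})
$$ T\,\mathcal{J}^{\Omega_r}(s,T)=r^{2s-1}\int_{\R}\frac{1-e^{-(2\pi|\eta|)^{2s}T r^{-2s}}}{(2\pi|\eta|)^{2s}}\,|\widehat{\chi}_{\Omega}(\eta)|^2\,d\eta.$$
As $T\to+\infty$ the integrand increases monotonically to $|\widehat{\chi}_{\Omega}(\eta)|^2/(2\pi|\eta|)^{2s}$, so the monotone convergence theorem yields~\eqref{lmoetcbt5432}, the limit being finite precisely because $s<\frac12$ makes $(2\pi|\eta|)^{-2s}$ integrable near the origin, where $|\widehat{\chi}_{\Omega}(\eta)|^2\to|\Omega|^2$; this is exactly the restriction recorded in the footnote to~\eqref{knrec53vdo97t}.

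Next I would encode every monotonicity statement through the logarithmic derivative
$$ \partial_s\log\mathcal{J}_\infty^{\Omega}(r,s)=2\log r+\frac{I'(s)}{I(s)},$$
and extract two structural facts about $I$. The first is that $I$ is log-convex in $s$: writing $I(s)=\int (2\pi|\xi|)^{-2s}\,d\mu(\xi)$ with $d\mu=|\widehat{\chi}_{\Omega}|^2\,d\xi$ a positive measure, H\"older's inequality gives $I(\theta s_0+(1-\theta)s_1)\le I(s_0)^{\theta}I(s_1)^{1-\theta}$, so $(\log I)'=I'/I$ is non-decreasing on $\left(0,\frac12\right)$. The second is the boundary behaviour of this logarithmic derivative: $(\log I)'(0^+)$ is finite, while $(\log I)'(s)\to+\infty$ as $s\to\frac12^-$. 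Differentiation under the integral sign (justified by the same integrability as above) gives $I'(s)=-2\int_{\R}|\widehat{\chi}_{\Omega}(\xi)|^2\,(2\pi|\xi|)^{-2s}\log(2\pi|\xi|)\,d\xi$, which together with $I(0)=|\Omega|$ shows that $(\log I)'(0^+)$ is finite.

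With these facts the three displays follow by elementary threshold choices. For~\eqref{hbcyevco87tb} I would set $r_{\Omega}:=e^{-(\log I)'(0^+)/2}$: since $(\log I)'(s)\ge (\log I)'(0^+)$, for $r>r_{\Omega}$ the logarithmic derivative is strictly positive on all of $\left(0,\frac12\right)$, giving strict monotonicity. For the dichotomy~\eqref{uytvuytvuytvfruytf}--\eqref{uytvuytvuytvfruytf2} I would set $r_{\sigma,\Omega}:=e^{-(\log I)'(\sigma)/2}$; non-decreasingness of $(\log I)'$ identifies $\sup_{s\in(0,\sigma)}(\log I)'(s)=(\log I)'(\sigma)$, so for $r<r_{\sigma,\Omega}$ one has $2\log r+(\log I)'(s)\le 2\log r+(\log I)'(\sigma)<0$ for every $s\in(0,\sigma)$ (strict decrease), whereas for $r>r_{\sigma,\Omega}$ continuity of $(\log I)'$ at $\sigma$ produces a point $s<\sigma$ with positive logarithmic derivative, ruling out monotone decrease. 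Finally~\eqref{ukybgiygiy} is immediate from $r_{\sigma,\Omega}=e^{-(\log I)'(\sigma)/2}$ and $(\log I)'(\sigma)\to+\infty$ as $\sigma\to\frac12^-$.

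The main obstacle is precisely proving that $(\log I)'(s)\to+\infty$ as $s\to\frac12^-$, which underlies both~\eqref{hbcyevco87tb} and~\eqref{ukybgiygiy}. Here I would exploit the explicit one-dimensional transform $|\widehat{\chi}_{\Omega}(\xi)|^2=\sin^2(\pi L\xi)/(\pi\xi)^2$, with $L:=|\Omega|$, and isolate the contribution of a fixed neighbourhood $\{|\xi|<\delta\}$ of the origin, where $|\widehat{\chi}_{\Omega}(\xi)|^2=L^2+O(\xi^2)$. A direct evaluation of $\int_0^{\delta}(2\pi\xi)^{-2s}\,d\xi$ and $\int_0^{\delta}(2\pi\xi)^{-2s}\log(2\pi\xi)\,d\xi$ then gives $I(s)\sim c\,(1-2s)^{-1}$ and $I'(s)\sim c'\,(1-2s)^{-2}$ as $s\to\frac12^-$, whence $(\log I)'(s)\sim 2\,(1-2s)^{-1}\to+\infty$; the remaining parts of the integrals, over $\{|\xi|\ge\delta\}$, stay bounded as $s\to\frac12^-$ and do not affect the leading order. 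This asymptotic analysis is the only genuinely computational step, everything else being a soft consequence of Plancherel's identity, H\"older's inequality, and monotonicity.
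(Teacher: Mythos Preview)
Your proof is correct and structurally parallel to the paper's: both exploit the factorization $\mathcal{J}_\infty^{\Omega}(r,s)=r^{2s-1}I(s)$ (the paper writes this, after reducing to $\Omega=(-R,R)$ and rescaling to $(-1,1)$, as $R^{1+2s}r^{2s-1}g(s)$ with $g$ playing the role of your $I$), and both convert the monotonicity claims into a threshold comparison between $\log r$ and the ratio $I'/I$. The technical choices diverge in two places. First, you establish that $(\log I)'$ is non-decreasing by a one-line application of H\"older's inequality (log-convexity of a Laplace-type integral), which lets you identify the relevant suprema and infima as boundary values; the paper instead computes $g'(0)$ explicitly---the calculation produces a constant involving the Euler--Mascheroni constant---and uses $g''>0$, then works with $\sup f$ and $\inf_{(0,\sigma)} f$ for $f:=-g'/(2g)$ directly, without invoking monotonicity of $f$. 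Second, for the divergence $(\log I)'(s)\to+\infty$ as $s\to\tfrac12^-$, you perform a direct asymptotic expansion of $I$ and $I'$ near the endpoint, whereas the paper observes that $\int_0^\sigma f=-\tfrac12\log\big(g(\sigma)/g(0)\big)\to-\infty$ (since $g(\sigma)\to+\infty$), which forces $m_\sigma=\inf_{(0,\sigma)}f\to-\infty$. Both routes are short; your log-convexity argument is slightly cleaner and avoids the explicit evaluation of $g'(0)$, while the paper's integral identity for $\int_0^\sigma f$ sidesteps the asymptotic computation entirely.
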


In a nutshell, the result in~\eqref{hbcyevco87tb} suggests that, for very long time asymptotics of a one-dimensional searching pattern, when the prey is uniformly distributed in a large interval,
the pessimizer occurs at~$s=0$ and the optimizer at~$s=\frac12$.

The situation for small intervals is instead different, since~\eqref{uytvuytvuytvfruytf} shows that
for any~$\sigma\in\left(0,\frac{1}{2}\right)$ the pessimizer cannot occur in the interval~$[0,\sigma]$,
provided that the domain is sufficiently small.

All in all, these results warn us about the complexity of the foraging problem, which needs to quantitatively and rigorously examine
different cases (depending on dimensionality, distribution of preys, etc.), since each scenario presents its own specificity.
\medskip

Some further comments about~\eqref{ukybgiygiy} and~\eqref{uytvuytvuytvfruytf2} are in order.
These claims state the sharpness of the choice of~$r_{\sigma,\Omega}$ in Corollary~\ref{pfff}.
Namely, by~\eqref{uytvuytvuytvfruytf2} one knows that~$r_{\sigma,\Omega}$ is taken ``as large as possible''
(of course, choosing a smaller value would still produce~\eqref{uytvuytvuytvfruytf}, but
this would violate~\eqref{uytvuytvuytvfruytf2}).

Interestingly, even this choice of~$r_{\sigma,\Omega}$ in its largest possible sense does not allow it to be bounded from zero
as~$\sigma\nearrow \frac{1}{2}$, as expressed in~\eqref{ukybgiygiy}, thus
showing the quantitative optimality of the claims in Corollary~\ref{pfff}.
\medskip

{F}rom a biological standpoint, it is also interesting to consider the case of a stationary prey which is not uniformly distributed and a diffusive forager that aims at optimizing its searching strategy. In this case, we consider a stationary distribution of prey~$p\in L^2(\R^n)$
and a diffusive forager with initial distribution equal to the prey. Thus, the distribution of forager
is a probability density satisfying
\begin{equation}\label{kicfd192u43rtygihbX}
\begin{cases}
\partial_t u(t,x)=-(-\Delta)^s u(t,x)\quad\mbox{for all}\quad (t,x)\in (0,+\infty)\times \R^n,\\
u(0,x)=p(x).
\end{cases}
\end{equation}
We will denote by~$u_p$ the solution of~\eqref{kicfd192u43rtygihbX}.
We recall that we are not assuming here that~$p\ge0$ (as already observed, from the biological point of view, we may consider negative values of~$p$
as denoting a lack of resources, or by regions containing poisonous food).
In light of~\eqref{EFFIFU9z123}, we consider in this setting
the efficiency functional of the form
\begin{equation}\label{234dpG25ictudwf23re}\int_{0}^T\int_{\R^n} u_p(t,x)\,p(x)\,dx\,dt.\end{equation}
For simplicity, we are dropping here the normalization obtained by dividing by~$T$ (compare~\eqref{16BIS-02ojf3egn} and~\eqref{234dpG25ictudwf23re}), since this normalization does not play a role when optimizing in the diffusion exponent~$s$.

Thus, we have:

\begin{thm}\label{0ojrt9-43hgoiuf6hb7v65yTHMSMD}
The efficiency functional~\eqref{234dpG25ictudwf23re} associated with a forager diffusing as in~\eqref{kicfd192u43rtygihbX} has the form
\begin{equation}\label{0ojrt9-43hgoiuf6hb7v65yc70bv65bvcnonyTDv8y4bv45ybv}
\int_{0}^T\int_{\R^n} e^{-(2\pi |\xi|)^{2s}t}\,|\widehat p(\xi)|^2
\,d\xi\,dt.\end{equation}

In particular, if the Fourier transform of~$p$ is supported in the ball of  radius $\frac{1}{2\pi}$, then this efficiency functional is monotone strictly increasing in~$s$
and therefore the optimal strategy is given by the Gaussian dispersal~$s=1$.

If instead the Fourier transform of~$p$ is supported in the complement of the ball of radius $\frac{1}{2\pi}$, then this efficiency functional is monotone strictly decreasing in~$s$
and therefore the optimal strategy is given by~$s=0$.
\end{thm}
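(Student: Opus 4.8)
The plan is to first derive the Fourier representation~\eqref{0ojrt9-43hgoiuf6hb7v65yc70bv65bvcnonyTDv8y4bv45ybv} and then to extract the monotonicity in~$s$ from the pointwise behaviour of the integrand, with the radius~$\frac1{2\pi}$ emerging as the exact threshold separating the two regimes.

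\emph{Step 1: the representation formula.} I would take the spatial Fourier transform of~\eqref{kicfd192u43rtygihbX}. By the identity recalled in the footnote, the Fourier transform of~$(-\Delta)^s u$ is~$(2\pi|\xi|)^{2s}\widehat u$, so for each fixed~$\xi$ the equation reduces to the linear ODE $\partial_t\widehat u(t,\xi)=-(2\pi|\xi|)^{2s}\widehat u(t,\xi)$ with datum $\widehat u(0,\xi)=\widehat p(\xi)$, whose solution is $\widehat u(t,\xi)=e^{-(2\pi|\xi|)^{2s}t}\widehat p(\xi)$. Since the chosen Fourier convention is unitary, Plancherel's identity in the space variable rewrites the inner integral as $\int_{\R^n}u_p(t,x)\,p(x)\,dx=\int_{\R^n}\widehat{u_p}(t,\xi)\,\overline{\widehat p(\xi)}\,d\xi=\int_{\R^n}e^{-(2\pi|\xi|)^{2s}t}\,|\widehat p(\xi)|^2\,d\xi$, where I use that $p$ is real. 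Integrating in~$t\in[0,T]$ and invoking Fubini then yields~\eqref{0ojrt9-43hgoiuf6hb7v65yc70bv65bvcnonyTDv8y4bv45ybv}. The interchange of the $dx$, $dt$, $d\xi$ integrations and the use of Plancherel are legitimate because $p\in L^2(\R^n)$ gives $\widehat p\in L^2(\R^n)$, so $|\widehat p|^2\in L^1$, while $0\le e^{-(2\pi|\xi|)^{2s}t}\le1$ makes the full integrand dominated by $T\,|\widehat p(\xi)|^2$ on $[0,T]\times\R^n$; in particular the functional is finite for every $s\in[0,1]$ and $T<\infty$.

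\emph{Step 2: the monotonicity mechanism.} Write $\mathcal E(s)$ for the functional in~\eqref{0ojrt9-43hgoiuf6hb7v65yc70bv65bvcnonyTDv8y4bv45ybv}. The decisive point is that, for fixed $t>0$ and $\xi$, the base $2\pi|\xi|$ controls the monotonicity of $s\mapsto(2\pi|\xi|)^{2s}$: it decreases when $2\pi|\xi|<1$ and increases when $2\pi|\xi|>1$. Hence $s\mapsto e^{-(2\pi|\xi|)^{2s}t}$ is strictly increasing on the ball $\{|\xi|<\frac1{2\pi}\}$ and strictly decreasing on its complement. This is made quantitative by
\[
\partial_s\,e^{-(2\pi|\xi|)^{2s}t}=-2t\,\ln(2\pi|\xi|)\,(2\pi|\xi|)^{2s}\,e^{-(2\pi|\xi|)^{2s}t},
\]
whose sign is that of $-\ln(2\pi|\xi|)$, so the locus $2\pi|\xi|=1$, i.e.\ the sphere of radius $\frac1{2\pi}$, is precisely the threshold. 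To conclude rigorously I would avoid differentiating under the integral and instead compare directly: in the \emph{ball case}, for $0<s_1<s_2$ the integrand of $\mathcal E(s_2)$ dominates that of $\mathcal E(s_1)$ pointwise for a.e.\ $\xi$ in the support and every $t>0$, and integrating against the nonnegative weight $|\widehat p|^2$ gives $\mathcal E(s_1)\le\mathcal E(s_2)$; the \emph{complement case} is identical with reversed inequalities.

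\emph{Step 3: strictness and endpoints.} Strictness comes from the fact that the pointwise inequality is strict on $\{0<|\xi|<\frac1{2\pi}\}$ (resp.\ $\{|\xi|>\frac1{2\pi}\}$) for $t>0$, while the set $\{|\widehat p|>0\}$ has positive measure (as $p\not\equiv0$) and lies, up to the measure-zero sphere and origin, inside that open region; hence the strict inequality survives integration. A dominated-convergence argument using $e^{-(2\pi|\xi|)^{2s}t}\le1$ shows $\mathcal E$ is continuous on $[0,1]$, so strict monotonicity identifies the optimizer as $s=1$ in the ball case and $s=0$ in the complement case. The main care in the whole argument is concentrated in two places: the $L^2$ justification of Plancherel together with the Fubini interchange in Step~1, and the upgrade from monotonicity to \emph{strict} monotonicity in Step~3, which rests on the non-vanishing of $|\widehat p|^2$ on the relevant region. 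A subordinate subtlety, worth flagging but harmless, is the logarithmic factor near $\xi=0$: since $(2\pi|\xi|)^{2s}|\ln(2\pi|\xi|)|\to0$ as $\xi\to0$, no integrability is lost there, which is exactly why one could alternatively legitimize differentiation under the integral sign on the bounded ball — on the unbounded complement that route needs extra bookkeeping, making the direct pointwise comparison the cleaner choice.
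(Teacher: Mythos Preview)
Your proof is correct, and Step~1 matches the paper's derivation of~\eqref{0ojrt9-43hgoiuf6hb7v65yc70bv65bvcnonyTDv8y4bv45ybv} essentially verbatim. The monotonicity argument, however, takes a genuinely different and more elementary route. The paper first integrates out the time variable to rewrite the functional as $\int_{\R^n}\frac{1-e^{-T(2\pi|\xi|)^{2s}}}{(2\pi|\xi|)^{2s}}|\widehat p(\xi)|^2\,d\xi$, then computes $\partial_s$ of the integrand explicitly, and finally justifies differentiation under the integral sign via separate $L^\infty$ bounds on $B_1$ and on $\R^n\setminus B_1$ (the bookkeeping you allude to at the end of Step~3). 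You instead keep the double integral and exploit the pointwise monotonicity of $s\mapsto e^{-(2\pi|\xi|)^{2s}t}$ directly, so that a simple comparison of integrands replaces all of the dominated-convergence machinery for the derivative. Your approach is shorter and cleaner for this theorem in isolation; the paper's approach buys an explicit formula for $\partial_s\mathcal E$ (equation~\eqref{knoq-12345}), which it reuses in the subsequent remark on optimality and, more substantially, as a template for the quantitative estimates in the proof of Theorem~\ref{kkjyyeffcdf5128}.
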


\begin{center}
\begin{figure}[!ht]
\includegraphics[width=0.29\textwidth]{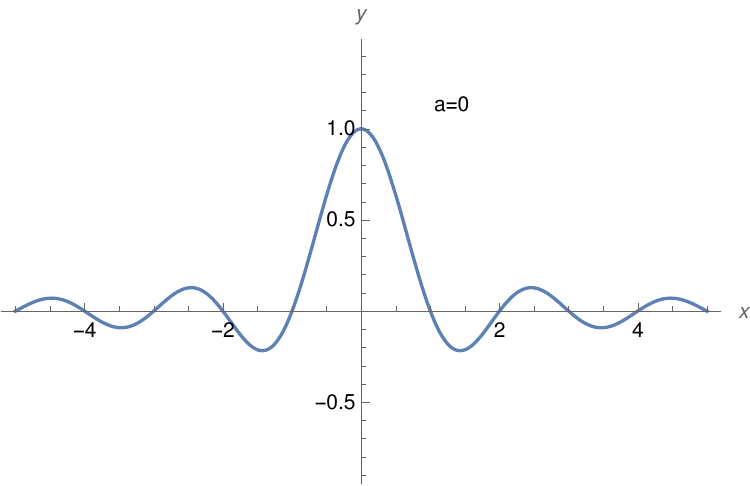}$\qquad$
\includegraphics[width=0.29\textwidth]{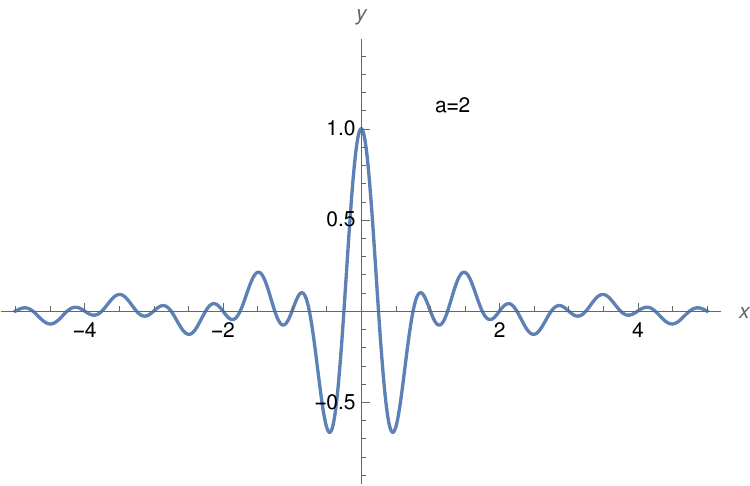}$\qquad$
\includegraphics[width=0.29\textwidth]{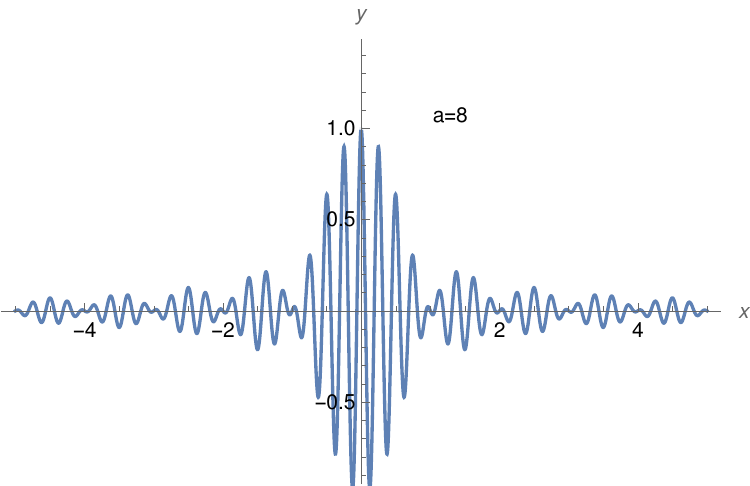}
\caption{\sl\footnotesize Plots of the inverse Fourier transform of~$\chi_{(-1/2,1/2)}(\xi-a)$, with~$a\in\{0,2,8\}$. Analytically, this corresponds to the plots of~$y=\frac{\cos(2\pi a x)\sin(\pi x)}{\pi x}$.}
        \label{nijufLO-06557FTHNSojdwm}
\end{figure}
\end{center}

Let us comment on the consistency of Theorem~\ref{0ojrt9-43hgoiuf6hb7v65yTHMSMD}
with the biological scenario. Roughly speaking, distributions whose Fourier transform are supported close to the origin
present ``mild oscillations'', while the ones whose Fourier transform has support reaching towards infinity
present ``wilder oscillations'' (see e.g. Figure~\ref{nijufLO-06557FTHNSojdwm}, for an explicit example of oscillations increased by shifting the support of the Fourier transform). Thus, in a sense, the monotonicity statement in Theorem~\ref{0ojrt9-43hgoiuf6hb7v65yTHMSMD} suggests that:
\begin{itemize}
\item on the one hand, when the distribution of preys has mild oscillations,
the classical random walk is better than any fractional diffusion (coherently with the intuition that
it is not convenient for the forager to leave its original location),
\item on the other hand, when the distribution of preys has wild oscillations,
the fractional parameter~$s=0$ is the optimizer (because, roughly speaking, it is more convenient for the
forager to try not to move at all, possibly at the risk of being sent to infinity straight away, since in any case small fluctuations
in a highly oscillatory framework
may end up in inconvenient regions for the prey distribution, recall the discussion below~\eqref{0ojlfe9375777584hgXojfg}).
\end{itemize}

\begin{center}
\begin{figure}[!ht]
\includegraphics[width=0.29\textwidth]{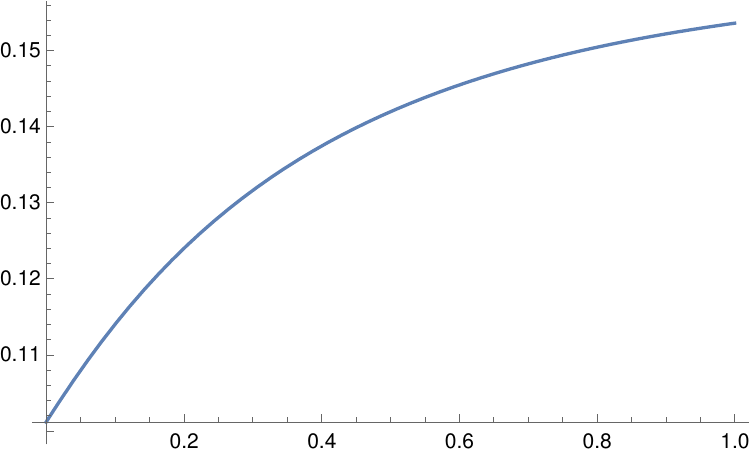}$\qquad$
\includegraphics[width=0.29\textwidth]{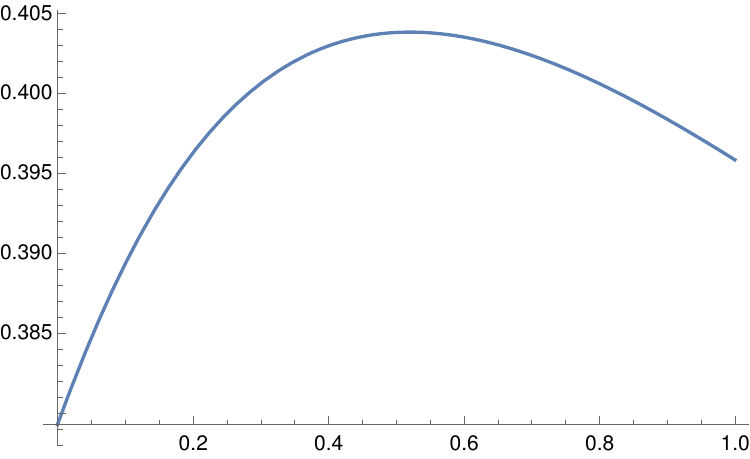}$\qquad$
\includegraphics[width=0.29\textwidth]{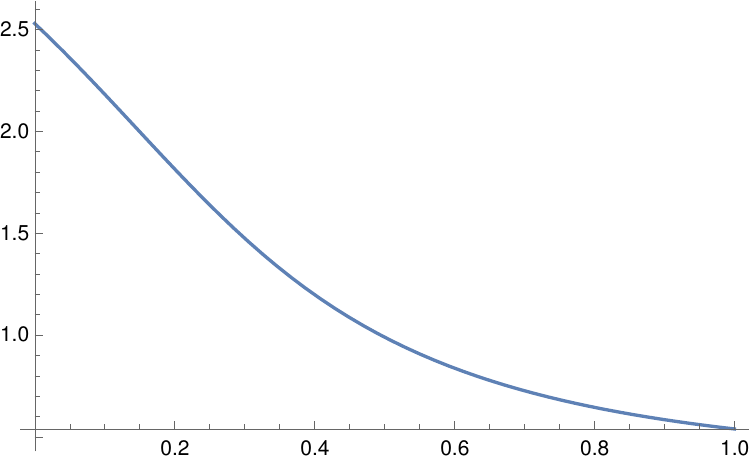}
\caption{\sl\footnotesize Plots of the efficiency functional in~\eqref{0ojrt9-43hgoiuf6hb7v65yc70bv65bvcnonyTDv8y4bv45ybv} when~$T=1$
and~$\widehat p=\chi_{(-a,a)}$, with~$a\in\{0.08,\,0.3,\,2\}$. Notice the change
in monotonicity in the fractional exponent~$s$.}
        \label{nijufLO-06512q3w4etryui99-57FTHNSojdwm2}
\end{figure}
\end{center}

The change of monotonicity detected in Theorem~\ref{0ojrt9-43hgoiuf6hb7v65yTHMSMD}
can be visualized in Figures~\ref{nijufLO-06512q3w4etryui99-57FTHNSojdwm2} and~\ref{nijufLO-06512q3w4etryui99-57FTHNS-000ojdw2233m}
(see also Figure~\ref{bhcaomcte45bc01} for a graphical sketch in higher dimension).\medskip

We can also interpret the results stated in Theorem~\ref{kkjyyeffcdf5128} in light of Theorem~\ref{0ojrt9-43hgoiuf6hb7v65yTHMSMD}, according, very roughly speaking, to a  prescription of the type
\begin{equation}\label{p0ewojfhnPOijsdfIDCACCA01M}\boxed{
\begin{gathered}
{\mbox{small domain}}\\ \Downarrow \\ {\mbox{considerable mass of the Fourier transform at infinity}}\\
\Downarrow\\ {\mbox{efficiency maximizer close to }}s=0,
\end{gathered}}\end{equation}
as well as
\begin{equation}\label{p0ewojfhnPOijsdfIDCACCA01M2}\boxed{
\begin{gathered}
{\mbox{large domain}}\\ \Downarrow \\ {\mbox{mass of the Fourier transform concentrated at the origin}}\\
\Downarrow\\ {\mbox{efficiency maximizer close to }}s=1.
\end{gathered}}\end{equation}

That is, if one considers, as a model case, a uniform distribution of prey in the interval~$(-a,a)$,
the corresponding Fourier transform takes the form~$\widehat p(\xi)=\frac{\sin(2\pi a\xi)}{\pi\xi}$.
Hence, one can renormalize the efficiency functional in~\eqref{0ojrt9-43hgoiuf6hb7v65yc70bv65bvcnonyTDv8y4bv45ybv} by dividing by~$a^2$ (since this does not change the optimization procedure in~$s$): in this way, a noticeable quantity for the efficiency functional becomes
\begin{equation}\label{nijufLO-134er06512q3w4etryui99-57FTHNS-000ojdw2233m83i4rA2tjhkgnT2} \frac{|\widehat p(\xi)|^2}{a^2}=\frac{\sin^2(2\pi a\xi)}{(\pi a\xi)^2}.\end{equation}
The advantage of this normalization is that the value at~$\xi=0$ of this function is always equal to~$4$, for every choice of~$a>0$. A sketch of such a function for different values of~$a$ is given in Figure~\ref{nijufLO-134er06512q3w4etryui99-57FTHNS-000ojdw2233m83i4rA2tjhkgnT}. 

{F}rom these diagrams it is apparent that for small values of~$a$ the function in~\eqref{nijufLO-134er06512q3w4etryui99-57FTHNS-000ojdw2233m83i4rA2tjhkgnT2} presents a significant portion of mass far from the origin (indeed, its limit as~$a\to0$ is identically equal to~$4$), and therefore,
by Theorem~\ref{0ojrt9-43hgoiuf6hb7v65yTHMSMD}, we would expect the efficiency functional to be maximized for values of the L\'evy exponent close to $s=0$, which is indeed proved to be true in Theorem~\ref{kkjyyeffcdf5128}. 
Note that these considerations are consistent with the heuristic presented in~\eqref{p0ewojfhnPOijsdfIDCACCA01M}.

Instead, for large values of~$a$ the function in~\eqref{nijufLO-134er06512q3w4etryui99-57FTHNS-000ojdw2233m83i4rA2tjhkgnT2} concentrates at the origin (indeed, its limit as~$a\to+\infty$ is the indicator function of the origin), and therefore,
by Theorem~\ref{0ojrt9-43hgoiuf6hb7v65yTHMSMD}, we would expect the efficiency functional to be maximized for values of the L\'evy exponent close to $s=1$, which is indeed proved to be true in Theorem~\ref{kkjyyeffcdf5128}. 
Note that these considerations are consistent with the heuristic presented in~\eqref{p0ewojfhnPOijsdfIDCACCA01M2}.

\begin{center}
\begin{figure}[!ht]
\includegraphics[width=0.29\textwidth]{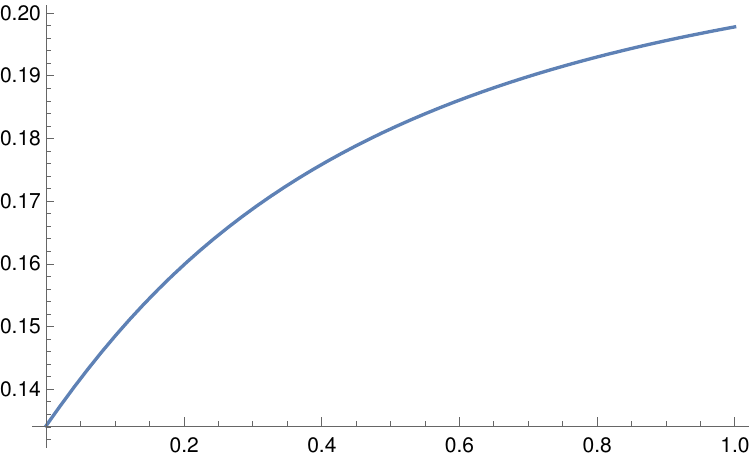}$\qquad$
\includegraphics[width=0.29\textwidth]{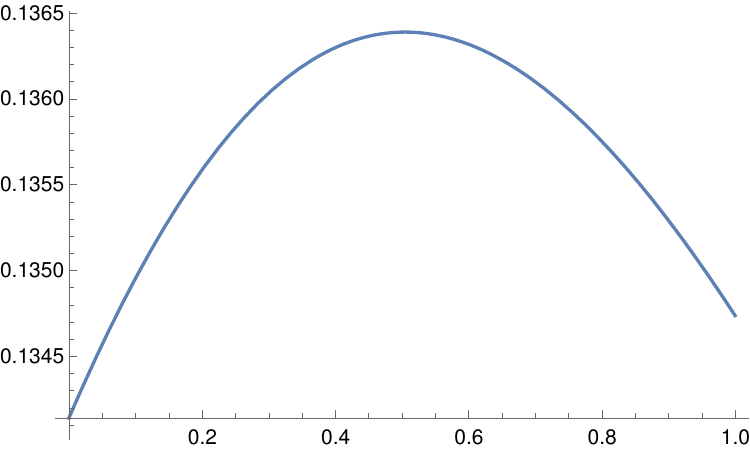}$\qquad$
\includegraphics[width=0.29\textwidth]{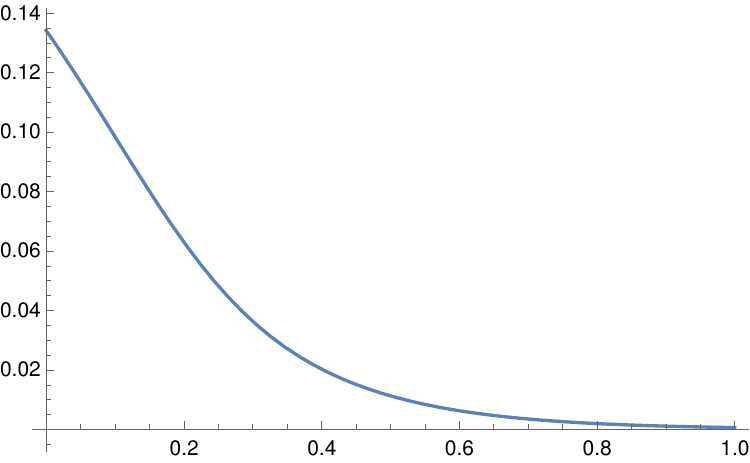}
\caption{\sl\footnotesize Plots of the efficiency functional in~\eqref{0ojrt9-43hgoiuf6hb7v65yc70bv65bvcnonyTDv8y4bv45ybv} when~$T=1$
and~$\widehat p=\chi_{\left(r-\frac1{3\pi},r+\frac1{3\pi}\right)}$, with~$r\in\{0,\,0.16,\,3\}$. Notice the change in monotonicity in the fractional exponent~$s$.}
        \label{nijufLO-06512q3w4etryui99-57FTHNS-000ojdw2233m}
\end{figure}
\end{center}

We stress however that the argument for large domains presents an important caveat:
namely, as stressed in Theorem~\ref{kkjyyeffcdf5128}, the optimizer for large domain is
indeed close to~$s=1$, but only provided that the search lasts long enough. In Figure~\ref{niliuer7g} we exhibit an example of how, for the same initial domain~$\Omega$, we can have a change of monotonicity for~${\mathcal{J}}^{\Omega}$, as we  consider different lifetimes $T\in (0,+\infty)$.

\begin{center}
\begin{figure}[!ht]
\includegraphics[width=0.29\textwidth]{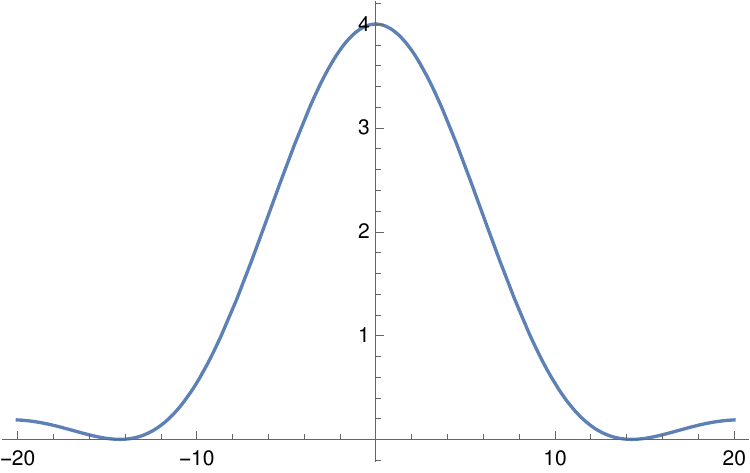}$\qquad$
\includegraphics[width=0.29\textwidth]{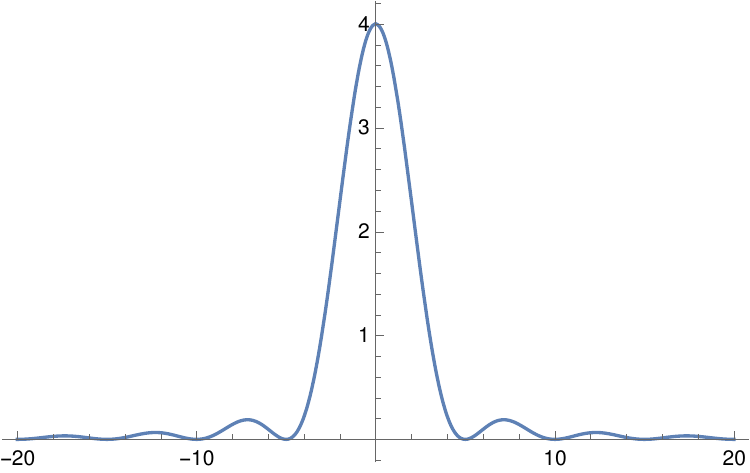}$\qquad$
\includegraphics[width=0.29\textwidth]{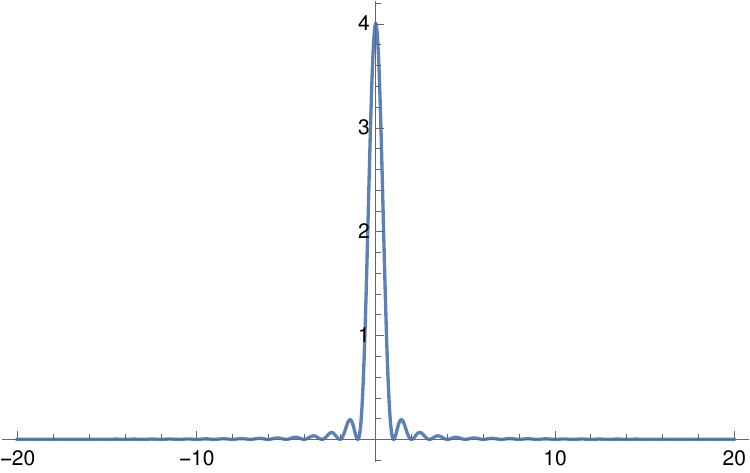}
\caption{\sl\footnotesize Plots of~$\frac{\sin^2(2\pi a\xi)}{(\pi a\xi)^2}$ with~$a\in\{0.035,\,0.1,\,0.5\}$.}
        \label{nijufLO-134er06512q3w4etryui99-57FTHNS-000ojdw2233m83i4rA2tjhkgnT}
\end{figure}
\end{center}

This is due to the fact that, for small times $T\in (0,+\infty)$, most of the mass of a forager diffusing according to~\eqref{16BIS} for some~$s$ close to zero is fixed at the initial location, with only a small part of it sent to infinity. In this case, even for a large domain, a strongly nonlocal approach seems more convenient, since most of the mass will be confined in $\Omega$.

Instead, when the time span $T\in (0,+\infty)$ is large enough, the monotonicity properties of ${\mathcal{J}}$ change. Indeed, as $s$ is close to zero, as a consequence of the large time span $T$, we have that a greater part of the forager mass is going to infinity, thus exiting the perimeter of the domain $\Omega$ where the prey is distributed. In this case, it is more convenient to move according to a strategy close to the Brownian motion. Indeed, this leads to short-medium jumps which allow to search the domain  while letting as little mass as possible out of $\Omega$.  \medskip

\begin{center}
\begin{figure}[!ht]
\includegraphics[width=0.40\textwidth]{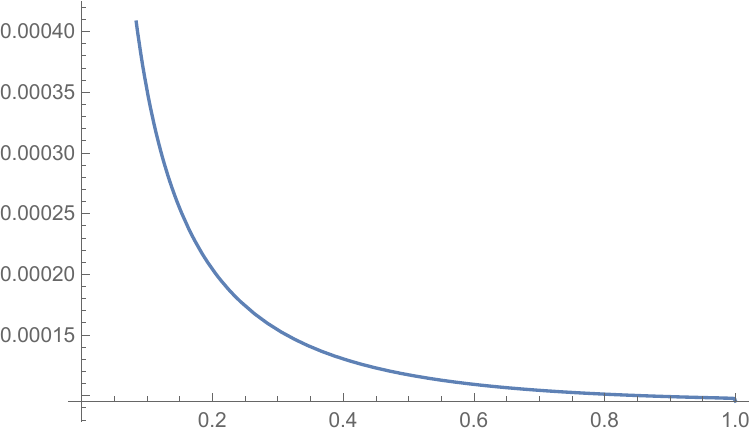}$\qquad$
\includegraphics[width=0.40\textwidth]{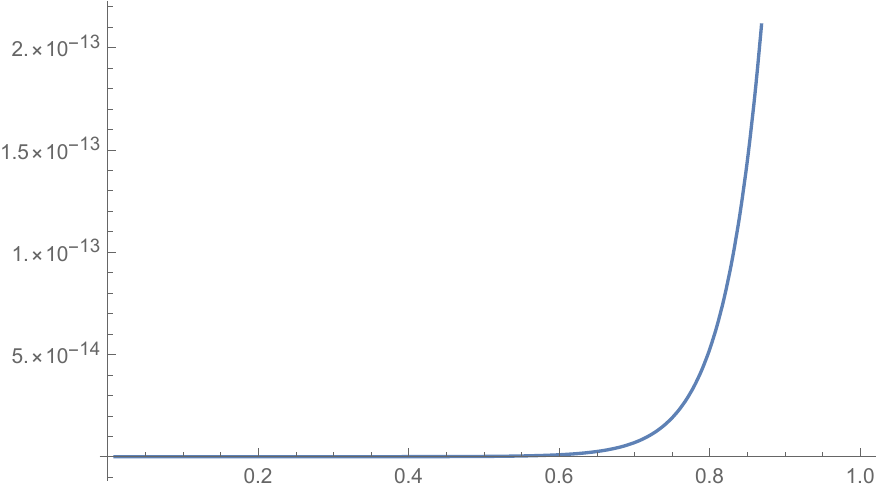}$\qquad$
\caption{\sl\footnotesize Plots of $ (0,1)\ni s\to {\mathcal{J}}^{\Omega}(s,T)$, with $\Omega=(-10^{4},10^4)$ and $T\in \left\lbrace 1,10^{8} \right\rbrace$.}
        \label{niliuer7g}
\end{figure}
\end{center}

It would be also interesting to consider more general cases of the above results, in which an active interaction between the forager and the prey takes place. For instance, one could consider the case of foragers with the ability of ``smelling'' preys, which could produce a drift, or chemotactic, term in the diffusive equation.
An interesting model for foragers with long range smell has been proposed in~\cite{Sanhedrai2019}, see also~\cite{PhysRevLett113238101}.
This model also accounts for the situation in which the forager
has a finite lifetime (that is, it can forage only for a finite time, since if it
does not eat any food for a given time interval, then it dies). This is a very interesting possibility
and we aim at exploring it further in future works~\cite{SPAN}.
\medskip

We now provide the proofs of the above results.

\section{Proof of the main results}\label{tgbepa-1}

Given~$y\in\R^n$ and~$s\in[0,1]$, we denote by~$G^s(t,x,y)$ the solution of the
fractional heat equation with initial condition given by
the Dirac's Delta Function at~$y$, namely~$G^s(t,x,y)$ solves
\begin{equation*}
\begin{dcases}
\partial_t u(t,x)=-(-\Delta)^s u(t,x)\quad\mbox{for all}\quad (t,x)\in (0,+\infty)\times\mathbb{R}^n , \\
u(0,x)=\delta(y-x).
\end{dcases}
\end{equation*}
Also, if~$u_p$ is the solution to~\eqref{kicfd192u43rtygihbX}, then we can write
\begin{equation}\label{dfejirv4b5uy45uityi}
u_p(t,x)=\int_{\R^n} {G}^s(t,x,y)\,p(y)\,dy.\end{equation}

We now start the proofs of the main results by checking~\eqref{0ojrt9-43hgoiuf6hb7v65yc70bv65bvcnonyTDv8y4bv45ybv},
so that we will be able to use it in the proofs of the main theorems.

\begin{proof}[Proof of~\eqref{0ojrt9-43hgoiuf6hb7v65yc70bv65bvcnonyTDv8y4bv45ybv}]
Taking the Fourier transforms in the spatial variables
of~\eqref{dfejirv4b5uy45uityi} we see that
$$ \widehat u_p(t,\xi)=\int_{\R^n} \widehat{G}^s(t,x,y)\,p(y)\,dy=
\int_{\R^n} e^{-(2\pi |\xi|)^{2s}t-2\pi iy\cdot\xi}\,p(y)\,dy.$$
Hence, by~\eqref{234dpG25ictudwf23re} and Plancherel's Theorem,
denoting with the bar the complex conjugation,
the efficiency functional can be written as
\begin{eqnarray*}&&
\int_{0}^T\int_{\R^n} u_p(t,x)\,\overline{p(x)}\,dx\,dt
=\int_{0}^T\int_{\R^n} \widehat u_p(t,x\xi)\,\overline{\widehat p(\xi)}\,d\xi\,dt\\&&\qquad=
\int_{0}^T\int_{\R^n}\int_{\R^n} e^{-(2\pi |\xi|)^{2s}t-2\pi iy\cdot\xi}\,p(y)\,\overline{\widehat p(\xi)}
\,dy\,d\xi\,dt
=\int_{0}^T\int_{\R^n} e^{-(2\pi |\xi|)^{2s}t}\,\widehat p(\xi)\,\overline{\widehat p(\xi)}
\,d\xi\,dt,
\end{eqnarray*}
giving~\eqref{0ojrt9-43hgoiuf6hb7v65yc70bv65bvcnonyTDv8y4bv45ybv}.
\end{proof}

Now we address the proofs of the main theorems.

\begin{proof}[Proof of Theorem~\ref{gbearacdf43s0}] In light of~\eqref{EFFIFU9z123}, the efficiency functional associated with a stationary forager as in~\eqref{thesta} is
\begin{equation}  
\begin{split}
&\frac{1}{T}\int_{0}^T\int_{\mathbb{R}^n} u(t,x)p(t,x)\,dx\,dt=\frac{1}{T}\int_{0}^T\int_{\mathbb{R}^n}\frac{\chi_{\Omega_2}(x)}{\left|\Omega_2\right|}\frac{\chi_{\Omega_1}(x)}{\left|\Omega_1\right|}\,dx\,dt=\frac{\left|\Omega_1\cap \Omega_2 \right|}{\left|\Omega_1 \right|\left|\Omega_2 \right|}. 
\end{split}
\end{equation}
In particular, if $\Omega_1\subset \Omega_2$ we have that this value coincides with $\frac{1}{\left| \Omega_2 \right|}$.

Recalling~\eqref{dfejirv4b5uy45uityi} and owing to the Maximum Principle for the fractional heat equation~\eqref{the}, we  have that
the efficiency functional in the fractional diffusion case is bounded from above by
\begin{equation*}
\begin{split}
\frac{1}{T \left|\Omega_1 \right|\left|\Omega_2 \right|}\int_{0}^T\int_{\Omega_1\times \Omega_2}G^s(t,x,y)\,dx\,dy\,dt\leq \frac{1}{T \left|\Omega_1\right| \left|\Omega_2 \right|}\int_{0}^T\int_{\Omega_1}\,dx\,dt=\frac{1}{\left|\Omega_2 \right|}.
\end{split}
\end{equation*}
This completes the proof.
\end{proof}


The material developed so far is sufficient to prove Theorem~\ref{kkjyyeffcdf5128}. For the reader's facility, this
technical proof is deferred to Appendix~\ref{APKmddu0oj4g034JLA}.

\begin{proof}[Proof of Corollary~\ref{kniredl998}]
We begin by recalling~\eqref{ooorefdvcte}. In this way,
if $r\in (0,\tilde{r}_\Omega)$ and $T\in (4,+\infty)$, we have that, for each $\sigma \in \left(\frac{1}{2},1 \right)$ and~$s\in (\sigma,1)$,  
\begin{equation*}
\partial_s\mathcal{J}^{\Omega_r}(s,T) \geq  \frac{1}{T}\left({C}\ln\left(\frac{T}{4}\right)T^{1-\frac{1}{2s}}-\frac{\left|\Omega\right|^2}{\pi (2\sigma-1)^2} \right).
\end{equation*}
Thus, since in this setting~$1-\frac{1}{2s}\ge0$, and accordingly~$T^{1-\frac{1}{2s}}\ge1$, we find that
\begin{equation}\label{SIGDE0}
\inf_{s\in(\sigma,1)}\partial_s \mathcal{J}^{\Omega_r}(s,T)\geq  \frac{1}{T}\left({C}\ln\left(\frac{T}{4}\right)-\frac{\left|\Omega\right|^2}{\pi (2\sigma-1)^2} \right).
\end{equation}

Now we define
\begin{equation}\label{SIGDE} \sigma_T:=\frac12+\frac{|\Omega|}{2\sqrt{C\pi\ln(T/4)}}.\end{equation}
We observe that~$\sigma_T\in\left(\frac12,1\right)$ for~$T$ large enough, and in fact
\begin{equation}\label{gllpldmtr-2}
\lim_{T\to+\infty}\sigma_T=\frac12.
\end{equation}

In view of~\eqref{SIGDE0} and~\eqref{SIGDE}, we find that
\begin{equation*}
\inf_{s\in(\sigma_T,1)}\partial_s \mathcal{J}^{\Omega_r}(s,T)\ge0,
\end{equation*}
whence the minimum of~${\mathcal{J}}^{\Omega_r}(\cdot,T)$ must occur for~$s\in[0,\sigma_T]$.

However, when~$r>0$ is small enough, by~\eqref{gnapegcnr30d} (applied here with~$n=1$)
we know that~${\mathcal{J}}^{\Omega_r}(\cdot,T)$ is decreasing in $s\in \left(0,\frac{1}{2}\right]$.

Therefore, when~$T$ is sufficiently large and~$r$ is sufficiently small,
the minimum of~${\mathcal{J}}^{\Omega_r}(\cdot,T)$ must occur for~$s\in\left[\frac12,\sigma_T\right]$.
This proves~\eqref{hbre4352618}.

Also, the claim in~\eqref{gllpldmtr} follows from~\eqref{gllpldmtr-2}.
\end{proof}

One is now in the position of completing the proof of Corollary~\ref{pfff}. For the reader's convenience, this technical proof is deferred to Appendix~\ref{OJldnfeABBpldfebfBas5Lr}.

\begin{proof}[Proof of Theorem~\ref{0ojrt9-43hgoiuf6hb7v65yTHMSMD}] 
We begin by observing that since the integrand in~\eqref{0ojrt9-43hgoiuf6hb7v65yc70bv65bvcnonyTDv8y4bv45ybv} is nonnegative, then we can change the order of integration and obtain
\begin{equation}\label{OJSLndDERO}
\begin{split}
\int_{0}^T\int_{\R^n} e^{-t\left(2\pi\left|\xi \right|\right)^{2s}}\left|\widehat{p}(\xi) \right|^2\,d\xi\,dt&=\int_{\R^n}\int_{0}^T e^{-t\left(2\pi\left|\xi \right|\right)^{2s}}\left|\widehat{p}(\xi) \right|^2\,dt\,d\xi\\
&=\int_{\R^n} \frac{1-e^{-T\left(2\pi\left|\xi \right|\right)^{2s}}}{\left(2\pi\left|\xi \right|\right)^{2s}}\left|\widehat{p}(\xi) \right|^2\,d\xi.
\end{split}
\end{equation}
Now, we define the function
\begin{equation*}
f(\xi,s,T):=\frac{1-e^{-T\left(2\pi\left|\xi \right|\right)^{2s}}}{\left(2\pi\left|\xi \right|\right)^{2s}},
\end{equation*}
and we compute its partial derivative with respect to the fractional parameter $s$ 
\begin{equation}\label{KAMSN012owe}
\begin{split}
\partial_s f(\xi,s,T) & =\frac{2\,Te^{-T\left(2\pi\left|\xi \right|\right)^{2s}}\ln\left(2\pi\left|\xi \right|\right) \left(2\pi\left|\xi \right|\right)^{4s}- \left(1-e^{-T\left(2\pi\left|\xi \right|\right)^{2s}}\right)2 \ln\left(2\pi\left|\xi \right|\right) \left(2\pi\left|\xi \right|\right)^{2s}}{\left(2\pi\left|\xi \right|\right)^{4s}}\\  
&=2 \ln\left(2\pi\left|\xi \right|\right)\left(2\pi\left|\xi \right|\right)^{2s} \left(\frac{T \left(2\pi\left|\xi \right|\right)^{2s}e^{-T\left(2\pi\left|\xi \right|\right)^{2s}}-1+e^{-T\left(2\pi\left|\xi \right|\right)^{2s}}}{\left(2\pi\left|\xi \right|\right)^{4s}} \right)\\ 
&=T^2\,k(\xi,s)\;\sigma'\left(T\left(2\pi\left|\xi \right|\right)^{2s}\right),
\end{split}
\end{equation}
where $\sigma$ is the function given by
\begin{equation}\label{mo-23refvchguq00}
\sigma(x):=\frac{1-e^{-x}}{x}
\end{equation}
and we set
\begin{equation}\label{gvoqrecxoq01}
k(\xi,s):=2\ln\left(2\pi\left|\xi \right|\right)\left(2\pi\left|\xi \right|\right)^{2s}.
\end{equation}
We notice that $k(\xi,s)\in L^\infty\left(B_1\times (\beta,1)\right)$ for each $\beta\in (0,1)$. 

Also, 
$$\lim_{x\to0}\sigma'(x)=-\frac12,\qquad\lim_{x\to+\infty}\sigma'(x)=0\qquad{\mbox{and}}\qquad e^x\ge x+1.$$
{F}rom these observations, it follows that
\begin{equation}\label{knyreclao3}
\sigma'(x)\leq 0\quad{\mbox{ for each $x\in (0,+\infty)$}} \qquad{\mbox{and}}\qquad \sigma' \in L^\infty((0,+\infty)).
\end{equation}
Therefore, if $\beta\in (0,1)$, we obtain the estimate
\begin{equation}\label{10293eforeachsbeta1}\sup_{{\xi\in B_1}\atop{s\in (\beta,1)}}
\left|\partial_s f(\xi,s,T) \right|\leq T^2\| k\|_{L^\infty(B_1\times(\beta,1))}\|\sigma'\|_{L^\infty((0,+\infty))}
=:C_{T,\beta}.
\end{equation}

Furthermore, we notice that we can rewrite the second line in~\eqref{KAMSN012owe} as
\begin{equation*}
\partial_s f(\xi,s,T)=-2u(\xi,s)v\left(T \left(2\pi\left|\xi \right|\right)^{2s}\right),
\end{equation*}
where
\begin{equation*}
v(x):=1-e^{-x}(x+1)\quad\mbox{and}\quad u(\xi,s):=\frac{\ln\left(2\pi\left|\xi \right|\right)}{\left(2\pi\left|\xi \right|\right)^{2s}}.
\end{equation*}
We note that~$v\in L^\infty((0,+\infty))$. Also,
for each $\beta \in (0,1)$,
we see that~$u\in L^\infty\left(\left(\R^n\setminus B_1\right) \times (\beta,1)\right)$.
As a consequence, for each $s\in (\beta,1)$,
\begin{equation*}
\sup_{{\xi \in \R^n\setminus B_1}\atop{\beta \in (0,1)}}\left|\partial_s f(\xi,s,T)\right|\leq \|u\|_{L^\infty( (\R^n\setminus B_1) \times (\beta,1) )}
\|v\|_{L^\infty((0,+\infty))} =:\tilde{C}_\beta.
\end{equation*}

In view of this estimate and~\eqref{10293eforeachsbeta1}, we conclude that for each $\beta \in (0,1)$ there exists some constant~$\widehat{C}_{T,\beta}$, depending on $\beta$ and $T$, such that,
for every~$\xi\in\R^n$ and~$s\in(\beta,1)$,
\begin{equation*}
\left|\partial_s f(\xi,s,T)\right| \left|\widehat{p}(\xi)\right|^2 \leq \widehat{C}_{T,\beta}\left|\widehat{p}(\xi)\right|^2.
\end{equation*} 
Accordingly, we can apply the Dominated Convergence Theorem, and obtain that, for every~$s\in(0,1)$, 
\begin{equation}\label{knoq-12345}
\partial_s \int_{\R^n} \frac{1-e^{-T\left(2\pi\left|\xi \right|\right)^{2s}}}{\left(2\pi\left|\xi \right|\right)^{2s}}\left|\widehat{p}(\xi) \right|^2\,d\xi= \int_{\R^n} T^2\,k(\xi,s)\;\sigma'\left(T\left(2\pi\left|\xi \right|\right)^{2s}\right)\left|\widehat{p}(\xi) \right|^2\,d\xi.
\end{equation}
Now, we notice that $k(\xi,s)$ is negative if and only if $\xi\in B_{\frac{1}{2\pi}}$. From this observation, \eqref{OJSLndDERO} and~\eqref{knyreclao3}, we can conclude the proof of Theorem~\ref{0ojrt9-43hgoiuf6hb7v65yTHMSMD}.
\end{proof}    

\begin{rem}{\rm We observe that the result stated in Theorem~\ref{0ojrt9-43hgoiuf6hb7v65yTHMSMD} is optimal, in the sense that translations of~$\widehat{p}$ supported in $B_{\frac{1}{2\pi}}$ produce internal critical points in $[0,1]$, breaking the monotonicity of~\eqref{0ojrt9-43hgoiuf6hb7v65yc70bv65bvcnonyTDv8y4bv45ybv} with respect to the L\'evy exponent (in particular, one cannot expect that the efficiency functional is always monotone in~$s$). Indeed, if for some~$v\in \partial B_1$ and some~$\widehat{p}\in L^2(\R^n)$
supported in~$B_{\frac{1}{2\pi}}$ we define the function
\begin{equation*}
g(s,r):=\int_{\R^n}
T\,k(\xi,s)\;\sigma'\left(T\left(2\pi\left|\xi \right|\right)^{2s}\right)\,|\widehat p(\xi-rv)|^2\,d\xi\,dt,
\end{equation*}
where $\sigma$ and~$k$ are given respectively in~\eqref{mo-23refvchguq00} and~\eqref{gvoqrecxoq01}, then by~\eqref{OJSLndDERO}, \eqref{knoq-12345}, and Theorem~\ref{0ojrt9-43hgoiuf6hb7v65yTHMSMD} we have that $g(s,0)>0$, while $g(s,\frac{3}{2\pi})<0$ for each $s\in (0,1)$. Since $g(s,\cdot)\in C((0,+\infty))$ and~$g$ is the derivative of the efficiency functional in~\eqref{0ojrt9-43hgoiuf6hb7v65yc70bv65bvcnonyTDv8y4bv45ybv},
we have that for each $s_0\in (0,1)$ there exists some~$r_0\in (0,\frac{3}{2\pi})$ such that $s_0$ is a critical point of ~\eqref{0ojrt9-43hgoiuf6hb7v65yc70bv65bvcnonyTDv8y4bv45ybv} for $\widehat{p}(\xi-rv)$. 

As an example of this phenomenon we consider the translations of $\widehat{p}(\xi)=\chi_{B_{\frac{1}{3\pi}}}(\xi)$. In particular, in Figure~\ref{bhcaomcte45bc01} we have plotted the two variables function 
$$
L(s,r):=\int_{0}^1\int_{\R}e^{-(2\pi |\xi|)^{2s}t}\,|\chi_{B_{\frac{1}{3\pi}}}(\xi-r)|^2\,d\xi\,dt. 
$$
We can clearly see that as we translate the support of~$\widehat{p}_r(\xi):=\chi_{B_{\frac{1}{3\pi}}}(\xi-r)$, the monotonicity in~$s$ of~$L(s,r)$ changes, according to Theorem~\ref{0ojrt9-43hgoiuf6hb7v65yTHMSMD}.}\end{rem}

\begin{center}
\begin{figure}[!ht]
\includegraphics[width=0.40\textwidth]{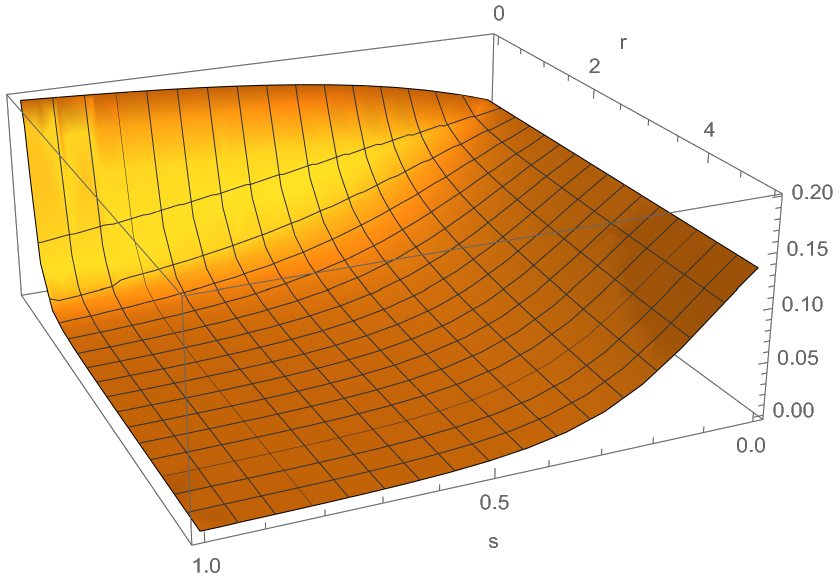}
\caption{\sl\footnotesize Plot of $ (0,1)\times (0,5)\ni (s,r)\to L(s,r)$. As $r$ increases we see that the monotonicity of $L(\cdot,r)$ changes. }
\label{bhcaomcte45bc01}
\end{figure}
\end{center}

\section{Conclusions}

Like all significant scientific theories, the L\'evy flight foraging hypothesis has undergone continuous questioning, discussion, debate, and revision. This is particularly due to the inherent challenge of distinguishing various factors that might contribute to anomalous diffusion, the influence of technological limitations on data collection and interpretation, and the sometimes nuanced differentiation between individual and collective behaviors.

In Section~\ref{DIAsdff} we have recalled the different views emerged on the L\'evy flight foraging hypothesis and also discussed the controversies about the use of distributions with infinite moment and processes with infinite propagation velocity.
\medskip

The main results of this paper have been presented in plain English in Section~\ref{1-Sljn3hfig} and in full mathematical detail in Section~\ref{forma}. Their interest lies in the attempt of providing a rigorous quantitative framework for a simple
model related to optimal strategies of foraging patterns. The model considered resembles that of 
search within patches and takes into account foragers diffusing through a fractional heat equation.
The forager can pick the L\'evy exponent of this diffusion in order to maximize the encounters with a given distribution of preys.

Models of this type, relying on a continuous version of L\'evy flights, may be conceptually advantageous, since they can reduce the number of external parameters involved in the analysis, allow explicit mathematical treatments, help understanding the core features
of the model, and favor the introduction of further realistic characteristics as a generalization of the system considered to start with.
Moreover, in spite of its structural simplicity, the model is rich enough to exhibit complicated behaviors which can be confronted with biological data and experiments. Additionally, these models can be utilized in the field of artificial intelligence,
e.g. in the study of robot behavior and optimization algorithms.\medskip

Models of this type have been introduced and studied in~\cite{EFLFFH}, where suitable efficiency functionals
have been put in place and analyzed in one spatial dimension, in~\cite{DGV1, DGV1-1, SPETTRALE}, where similar models have been investigated
in bounded domains in the light of spectral analysis, and in~\cite{ULTI}, where the notion of optimal strategy has been confronted with its factual feasibility.\medskip

In this paper, we provide three new main results, namely:
\begin{itemize}
\item Theorem~\ref{gbearacdf43s0}, confirming, as a sanity check, that, in this model, the foragers 
attaining the highest efficiency in situations in which the prey is available at the initial location are the stationary ones,
\item Theorem~\ref{kkjyyeffcdf5128}, stating that Gaussian strategies are optimal for large domains and very large searching times
(with a finer and quantitative discussion also in terms of the spatial dimension),
\item Theorem~\ref{0ojrt9-43hgoiuf6hb7v65yTHMSMD}, affirming that
if the Fourier transform of the distribution of a stationary prey is supported in a suitable volume, then this efficiency functional is monotone increasing in~$s$
and therefore the optimal strategy is given by the Gaussian dispersal~$s=1$, while
if the Fourier transform of the distribution of a stationary prey is supported in the complement of a suitable volume, then the efficiency functional is monotone decreasing in~$s$
and therefore the optimal strategy is given by~$s=0$.\end{itemize}

We stress that, roughly speaking, compactly supported Fourier transforms correspond to distributions of preys with controlled oscillation, while Fourier transforms supported close to infinity give rise to very oscillatory distributions of preys.

The precise statements of these results were given in Section~\ref{forma} and confronted with their biological interpretation.

We also mention that our model can be suitably modified to include the case of 
foragers possessing a limited lifespan (meaning that they must consume at least a specific quantity of food within a certain time frame, or die): we will specifically address this point in the forthcoming work~\cite{SPAN}.

\begin{appendix}

\section{Proof of Theorem~\ref{kkjyyeffcdf5128}}\label{APKmddu0oj4g034JLA}
For the sake of simplicity in what follows we will denote by $C$, up to renaming it, constants depending only on  $\Omega$ and $n$.
We claim that for each $r\in (0,+\infty)$ it holds that 
\begin{equation}\label{gre-yner4=99}
{\mathcal{J}}^{\Omega_r}(s,T)=
\frac1{r^{n}}
{\mathcal{J}}^{\Omega}\left(s,\frac{T}{r^{2s}}\right)
.
\end{equation}
To prove this, we observe that, using the scaling properties of~$G^s(t,x,y)$
(see e.g.~\cite[Difference~2.10]{MR3967804}), 
\begin{equation*}
\begin{split}
{\mathcal{J}}^{\Omega_r}(s,T)&=\frac{1}{T\left|\Omega\right|^2 r^{2n}}\int_{0}^T\int_{\Omega_r\times\Omega_r}G^s(t,x,y)\,dx\,dy\,dt\\
&=\frac{1}{T\left|\Omega\right|^2 }\int_{0}^T\int_{\Omega\times\Omega}G^s(t,rx,ry)\,dx\,dy\,dt\\
&=\frac{r^{-n}}{T\left|\Omega\right|^2 } \int_{0}^T\int_{\Omega\times\Omega}G^s\left(\frac{t}{r^{2s}},x,y\right)\,dx\,dy\,dt\\
&=\frac{r^{2s-n}}{T\left|\Omega\right|^2 } \int_{0}^{\frac{T}{r^{2s}}}\int_{\Omega\times\Omega}G^s(t,x,y)\,dx\,dy\,dt\\
&=\frac1{r^{n}}
{\mathcal{J}}^{\Omega}\left(s,\frac{T}{r^{2s}}\right),
\end{split}
\end{equation*}which leads to~\eqref{gre-yner4=99}.

We define the function
\begin{equation*}
g(s,r,T):=\int_{\mathbb{R}^n} \sigma\left(Tr^{-2s}(2\pi\left|\xi \right|)^{2s}\right)\left|\widehat{\chi}_{\Omega}(\xi)\right|^2\,d\xi,
\end{equation*}
where we have set
\begin{equation*}
\sigma(x):=\frac{1-e^{-x}}{x}.
\end{equation*}
We notice that
\begin{equation}\label{nhqwp01}
{\mathcal{J}}^{\Omega_r}(s,T)=\frac{1}{r^n} g(s,r,T).
\end{equation}
To check this,
we use~\eqref{0ojrt9-43hgoiuf6hb7v65yc70bv65bvcnonyTDv8y4bv45ybv} and~\eqref{gre-yner4=99} to see that
\begin{equation}\begin{split}\label{fdscfre333}
& {\mathcal{J}}^{\Omega_r}(s,T)=
\frac1{r^n}
{\mathcal{J}}^{\Omega}\left(s,\frac{T}{r^{2s}}\right)
=\frac{r^{2s-n}}{T}
\int_{0}^{T/r^{2s}}\int_{\R^n} e^{-(2\pi |\xi|)^{2s}t}\,|\widehat \chi_\Omega(\xi)|^2
\,d\xi\,dt\\
&\qquad=
\frac{r^{2s-n}}{T}
\int_{\R^n} \frac{1-e^{-Tr^{-2s}(2\pi |\xi|)^{2s}}}{(2\pi|\xi|)^{2s}}
\,|\widehat \chi_\Omega(\xi)|^2\,d\xi=\frac1{r^n}g(s,r,T),
\end{split}\end{equation} 
which establishes~\eqref{nhqwp01}.

As a consequence of~\eqref{nhqwp01},
the monotonicity properties of~${\mathcal{J}}^{\Omega_r}(\cdot,T)$ coincide with the ones of~$g(\cdot,r,T)$.
Hence, from now on we will focus on the monotonicity of the function~$g(s,r,T)$
with respect to~$s$.

For this, we compute the derivative 
\begin{equation*}
\begin{split}
\partial_s\Big(\sigma\left(Tr^{-2s}(2\pi\left|\xi \right|)^{2s}\right) \Big)&=\sigma'\left(Tr^{-2s}(2\pi\left|\xi \right|)^{2s}\right)
\partial_s\big(Tr^{-2s}(2\pi\left|\xi \right|)^{2s}\big)\\
&=\sigma'\left(Tr^{-2s}(2\pi\left|\xi \right|)^{2s}\right)\cdot 2Tr^{-2s}(2\pi\left|\xi \right|)^{2s}\ln\left(\frac{2\pi \left|\xi\right|}{r}\right)\\
&=2\,\theta\left(Tr^{-2s}(2\pi\left|\xi \right|)^{2s}\right)\ln\left(\frac{r}{2\pi \left|\xi\right|}\right),
\end{split}
\end{equation*}
where we have denoted 
\begin{equation}\label{defthetacvgt}
\theta(x):=-\sigma'(x)x=\frac{1-e^{-x}(x+1)}{x}.
\end{equation}
We also observe that
\begin{equation}\label{r043vfvdhhdfsHHHHHH}
\left\|\widehat{\chi}_{\Omega}\right\|_{L^2(\Omega)}^2=\left|\Omega\right|\qquad {\mbox{and}}\qquad
\left\|\widehat{\chi}_{\Omega}\right\|_{L^\infty(\R^n)}\leq \left|\Omega\right|.
\end{equation}
Using these facts and Theorem~2 in~\cite{MR142978}, we have that
\begin{equation}\label{bgtepqncpoz-7743v}
\begin{split}
&\int_{\R^n} \left|\ln\left(\frac{2\pi \left|\xi\right|}{r}\right)\right|\left|\widehat{\chi}_{\Omega}(\xi)\right|^2\,d\xi\\
=\;&\int_{\R^n} \Big(\left|\ln\left(2\pi \left|\xi\right|\right)\right|+\ln(r)\Big)\left|\widehat{\chi}_{\Omega}(\xi)\right|^2\,d\xi\\
=\;&
\sum_{k=0}^{+\infty} \int_{B_{2^{k+1}}\setminus B_{2^k}}
\left|\ln\left(2\pi \left|\xi\right|\right)\right|\,
\left|\widehat{\chi}_{\Omega}(\xi)\right|^2\,d\xi+
\int_{B_1} \left|\ln\left(2\pi \left|\xi\right|\right)\right|\,
\left|\widehat{\chi}_{\Omega}(\xi)\right|^2\,d\xi
+|\ln r|\int_{\R^n} \left|\widehat{\chi}_{\Omega}(\xi)\right|^2\,d\xi\\
\leq\; & \sum_{k=0}^{+\infty} \int_{B_{2^{k+1}}\setminus B_{2^k}} \frac{\left|\ln( 2\pi\left| \xi\right|)\right|}{\left|\xi\right|^2}\left|\xi\right|^2\left|\widehat{\chi}_{\Omega}(\xi)\right|^2\,d\xi+\int_{B_1}\left|\ln( 2\pi\left| \xi\right|)\right|\left|\widehat{\chi}_{\Omega}(\xi)\right|^2\,d\xi+\ln r\left|\Omega\right|
\\
\leq &\;\sum_{k=0}^{+\infty}  \frac{\ln(\pi 2^{k+2})}{2^{2k}}\int_{B_{2^{k+1}}\setminus B_{2^k}} \left|\xi\right|^2\left|\widehat{\chi}_{\Omega}(\xi)\right|^2\,d\xi+\left|\Omega\right|^2\int_{B_1}\left|\ln( 2\pi\left| \xi\right|)\right|\,d\xi+\ln r\left|\Omega\right|\\
\leq  &\;\sum_{k=0}^{+\infty} \frac{{C}\ln(\pi 2^{k+2})}{2^{2k}}2^{k+1}+ {C}\left|\Omega\right|^2+\ln r\left|\Omega\right|\\
=& \; {C}\sum_{k=0}^{+\infty} \frac{\ln\pi
+(k+2)\ln 2}{2^{k-1}}+{C}\left|\Omega\right|^2+\ln r\left|\Omega\right|\\
=&\; {C}+  \ln r\left|\Omega\right|.
\end{split}
\end{equation}
Furthermore, we notice that $\theta \in L^\infty((0,+\infty))$. Thanks to this  and~\eqref{bgtepqncpoz-7743v}, we can apply the Dominated Convergence Theorem and conclude that 
\begin{equation*}
\partial_s g(s,r,T)=2\int_{\R^n}\theta\left(Tr^{-2s}(2\pi\left|\xi \right|)^{2s}\right)\ln\left(\frac{r}{2\pi \left|\xi\right|}\right)\left|\widehat{\chi}_{\Omega}(\xi)\right|^2\,d\xi.
\end{equation*}
We write~$\partial_s g(s,r,T)=2(I+II)$, where~$I$ and~$II$ are defined as
\begin{equation}\begin{split}\label{so038v5b90riwoIIII}
&I:=\int_{B_{\frac{r}{2\pi}}} \theta\left(Tr^{-2s}(2\pi\left|\xi \right|)^{2s}\right)\ln\left(\frac{r}{2\pi \left|\xi\right|}\right)\left|\widehat{\chi}_{\Omega}(\xi)\right|^2\,d\xi\\
{\mbox{and }}\qquad &II:=\int_{\R^n\setminus B_{\frac{r}{2\pi}}} \theta\left(Tr^{-2s}(2\pi\left|\xi \right|)^{2s}\right)\ln\left(\frac{r}{2\pi \left|\xi\right|}\right)\left|\widehat{\chi}_{\Omega}(\xi)\right|^2\,d\xi.
\end{split}\end{equation}

Now, we prove~\eqref{hnbg512dscxpae}. Since  $\ln\left(\frac{r}{2\pi \left|\xi\right|}\right)\geq 0$ for each $\xi\in B_{\frac{r}{2\pi}}$ and $\theta\geq 0$, then 
\begin{equation}\label{mlare43..8}
\theta\left(Tr^{-2s}(2\pi\left|\xi \right|)^{2s}\right)\ln\left(\frac{r}{2\pi \left|\xi\right|}\right)\left|\widehat{\chi}_{\Omega}(\xi)\right|^2\geq 0
\end{equation}
for $\xi \in B_{\frac{r}{2\pi}}$. In particular, if $r\geq 2\pi$ this means that 
\begin{equation*}
\begin{split}
I &\geq \int_{B_{1}\setminus B_{\frac{1}{2}}} \theta\left(Tr^{-2s}(2\pi\left|\xi \right|)^{2s}\right)\ln\left(\frac{r}{2\pi \left|\xi\right|}\right)\left|\widehat{\chi}_{\Omega}(\xi)\right|^2\,d\xi.
\end{split}
\end{equation*}
Moreover, we notice that
\begin{equation}\label{gbre}
\theta(x)\leq \frac{1}{x},
\end{equation}
for each $x\in (0,+\infty)$, and also $\theta(x)>\frac{1}{2x}$ if $x\in (0,+\infty)$ is large enough. Thanks to this lower bound we have
\begin{equation}\label{dji35u34i5y89567ytredvcgehjd}
\begin{split}
I &\geq \int_{B_{1}\setminus B_{\frac{1}{2}}} \frac{r^{2s}}{2T(2\pi \left|\xi\right|)^{2s}}\ln\left(\frac{r}{2\pi \left|\xi\right|}\right)\left|\widehat{\chi}_{\Omega}(\xi)\right|^2\,d\xi\\
&\geq \frac{r^{2s}\ln r}{4T} \int_{B_{1}\setminus B_{\frac{1}{2}}} \frac{\left|\widehat{\chi}_{\Omega}(\xi)\right|^2}{(2\pi \left|\xi\right|)^{2s}}\,d\xi \\
&\geq {C}\,\frac{r^{2s}\ln r}{T}.
\end{split}
\end{equation}
for $r\in (1,+\infty)$ large enough and $T\geq r^2$.

Now, we estimate $II$. To do so, we observe that in~$\R^n\setminus
B_{\frac{r}{2\pi}}$ it holds that~$\ln \left(\frac{r}{2\pi \left|\xi\right|}\right)\leq 0$, and thus, to ease notation and identify positive quantities, we write
\begin{equation*}
II=-\int_{\R^n\setminus B_{\frac{r}{2\pi}}} \theta\left(Tr^{-2s}(2\pi\left|\xi \right|)^{2s}\right)\ln \left(\frac{2\pi \left|\xi\right|}{r}\right)\left|\widehat{\chi}_{\Omega}(\xi)\right|^2\,d\xi,
\end{equation*}
with 
\begin{equation}\label{bgtreclaor}
 \theta\left(Tr^{-2s}(2\pi\left|\xi \right|)^{2s}\right)\ln \left(\frac{2\pi \left|\xi\right|}{r}\right)\geq 0.
\end{equation}
Therefore, using~\eqref{gbre} we can estimate
\begin{equation*}
\begin{split}
&\int_{\R^n\setminus B_{\frac{r}{2\pi}}} \theta\left(Tr^{-2s}(2\pi\left|\xi \right|)^{2s}\right)\ln \left(\frac{2\pi \left|\xi\right|}{r}\right)\left|\widehat{\chi}_{\Omega}(\xi)\right|^2\,d\xi\\
\leq & \int_{\R^n\setminus B_{\frac{r}{2\pi}}} \frac{r^{2s}}{T(2\pi\left|\xi \right|)^{2s}}\ln \left(\frac{2\pi \left|\xi\right|}{r}\right)\left|\widehat{\chi}_{\Omega}(\xi)\right|^2\,d\xi\\
\leq \,&\frac{1}{T}\int_{\R^n\setminus B_{\frac{r}{2\pi}}} \ln\left(\frac{2\pi\left|\xi \right|}{r}\right) \left|\widehat{\chi}_{\Omega}(\xi)\right|^2\,d\xi.
\end{split}
\end{equation*}
Now, applying Theorem~2 in~\cite{MR142978} we obtain that 
\begin{equation*}
\begin{split}
&\int_{\R^n\setminus B_{\frac{r}{2\pi}}} \ln\left(\frac{2\pi\left|\xi \right|}{r}\right) \left|\widehat{\chi}_{\Omega}(\xi)\right|^2\,d\xi\\
=\;& \sum_{k=0}^{+\infty} \int_{B_{\frac{2^{k}r}{\pi}}\setminus B_{\frac{2^{k-1}r}{\pi}} } \ln\left(\frac{2\pi\left|\xi \right|}{r}\right)\frac{\left|\xi \right|^2}{\left|\xi \right|^2}\left|\widehat{\chi}_{\Omega}(\xi)\right|^2\,d\xi\\
\leq\; &\sum_{k=0}^{+\infty} \frac{\pi^2(k+1)\ln 2}{r^2 2^{2k-2}}
\int_{B_{\frac{2^{k}r}{\pi}}\setminus B_{\frac{2^{k-1}r}{\pi}} }\left|\xi \right|^2\left|\widehat{\chi}_{\Omega}(\xi)\right|^2\,d\xi\\
\leq \;&C \sum_{k=0}^{+\infty} \frac{\pi^2(k+1)\ln 2}{r^2 2^{2k-2}} \cdot
\frac{2^kr}{\pi}\\
= \;& \frac{{C}}{r}.
\end{split}
\end{equation*}

Gathering these observations, we deduce that 
\begin{equation*}
II \geq -\frac{{C}}{r\,T}.
\end{equation*}
{F}rom this and~\eqref{dji35u34i5y89567ytredvcgehjd},
we infer that, if~$r\in (1,+\infty)$ is large enough and~$T\in [r^2,+\infty)$,
\begin{equation*}
\partial_s g(s,r,T)=2(I+II) \geq \frac{{C}}{T}\left(r^{2s}\ln r- \frac{{\tilde{C}}}{r}\right)>0.
\end{equation*}
Recalling~\eqref{nhqwp01}, we complete the proof of~\eqref{hnbg512dscxpae}

Now, we show~\eqref{gnapegcnr30d}. To do so, we recall the definition of
the function~$\theta$ in~\eqref{defthetacvgt} and
we observe that, for each~$x\in (0,+\infty)$, 
\begin{equation}\label{inveiowr55p0987698}
\theta(x)\leq \widehat{C}\,x,
\end{equation}
for some positive constant $\widehat{C}$. 

We recall the definition of~$I$ in~\eqref{so038v5b90riwoIIII} and we change variable to see that 
\begin{equation}\label{knolqw31vz885}
I= \frac{r^n}{(2\pi)^n}\int_{B_1} \theta\left(T\left|\eta \right|^{2s}\right)\ln\left(\frac{1}{\left|\eta\right|}\right)\left|\widehat{\chi}_{\Omega}\left(\frac{r\eta}{2\pi}\right)\right|^2\,d\eta.
\end{equation}
In view of~\eqref{r043vfvdhhdfsHHHHHH} and~\eqref{inveiowr55p0987698},
we can estimate
\begin{equation*}
\begin{split}
I\leq &\;\frac{\widehat{C} r^n}{(2\pi)^{n}}\int_{B_{1}} T\left|\eta \right|^{2s} \ln\left(\frac{1}{\left|\eta\right|}\right) \left|\Omega \right|^2\,d\eta\\
\leq &\; \frac{\widehat{C}\,r^n T \left|\Omega \right|^2}{(2\pi)^{n}}  \int_{B_{1}} \ln\left(\frac{1}{\left|\eta\right|}\right)\,d\eta\\
\leq & \; {C} r^n T. 
\end{split}
\end{equation*}

Also, recalling the definition of~$II$ in~\eqref{so038v5b90riwoIIII},
if $r\in (0,\pi)$ is sufficiently small, possibly in dependance of~$T$, we have that 
\begin{equation*}
\begin{split}
-II=&\;\int_{\R^n\setminus B_{\frac{r}{2\pi}}} \theta\left(Tr^{-2s}(2\pi\left|\xi \right|)^{2s}\right)\ln \left(\frac{2\pi \left|\xi\right|}{r}\right)\left|\widehat{\chi}_{\Omega}(\xi)\right|^2\,d\xi\\
\ge &\;\int_{B_1\setminus B_{1/2}} \theta\left(Tr^{-2s}(2\pi\left|\xi \right|)^{2s}\right)\ln \left(\frac{2\pi \left|\xi\right|}{r}\right)\left|\widehat{\chi}_{\Omega}(\xi)\right|^2\,d\xi\\
\geq & \;\int_{B_1\setminus B_{{1}/{2}}}\frac{r^{2s}}{T\,\left(2\pi\left|\xi \right|\right)^{2s}} \ln\left(\frac{\pi}{r}\right)\left|\widehat{\chi}_{\Omega}(\xi)\right|^2\,d\xi\\
=&\;\frac{r^{2s}}{T}\ln\left(\frac{\pi}{r}\right)\int_{B_1\setminus B_{{1}/{2}}} \frac{ \left|\widehat{\chi}_{\Omega}(\xi)\right|^2}{(2\pi\left| \xi\right|)^{2s}} \,d\xi\\
=& \;\frac{{C}\, r^{2s}}{T }\ln\left(\frac{\pi}{r}\right).
\end{split}
\end{equation*}

Therefore, we conclude that
\begin{equation*}
\begin{split}
\partial_s g(s,T,r)= 2(I+II)\leq  {C}r^{2s}\left({\tilde{C}}r^{n-2s} T-\frac{1}{T}\ln\left(\frac{\pi}{r}\right)\right),
\end{split}
\end{equation*} 
which gives that for $r>0$ small enough, depending on $\Omega$ and $T$, we have that $\partial_s g(s,r,T)<0$ as far as $n\geq 2s$. This completes
the proof of~\eqref{gnapegcnr30d}.

Now, we study the case $n<2s$, that is, $n=1$ and~$s\in\left(\frac12,1\right)$. 
For this, we notice that for each~$x\in (0,+\infty)$ there exists some constant~$C_x$ such that 
\begin{equation*}
\theta(y)\geq C_x y,
\end{equation*}
for each $y\in (0,x)$. Therefore, 
if~$|\eta|\in\left(\frac1{T^{\frac{1}{2s}}}, \frac2{T^{\frac{1}{2s}}}\right)$
then~$T|\eta|^{2s}\in(1, 2^{2s})\subseteq(1,4)$, and thus
$$\theta(T|\eta|^{2s})\geq {C} T|\eta|^{2s},$$
and moreover $2 T^{-\frac{1}{2s}} \leq 1$ for each $s\in (0,1)$ if $T\geq 4$. 

Also, we notice also that there exists some $\tilde{r}_\Omega\in (0,1)$ such that, for each $r\in (0,\tilde{r}_\Omega)$ and $\eta \in B_1 $, it holds that 
\begin{equation*}
\left|\widehat{\chi}_{\Omega}\left(\frac{r\eta}{2\pi}\right)\right| \geq\frac{\left|\Omega \right|}{2}.
\end{equation*}
As a consequence of this, recalling also~\eqref{knolqw31vz885} and using the Dominated Convergence Theorem, we deduce that,
if~$T\geq 4$ and $r\in (0,\tilde{r}_\Omega)$, then it holds that
\begin{equation*}
\begin{split}
I &\geq {C}r \int_{\frac1{T^{\frac{1}{2s}}}}^{\frac2{T^{\frac1{2s}}}}  T\left|\eta \right|^{2s}\ln\left(\frac{1}{\left|\eta\right|} \right)\left|\widehat{\chi}_{\Omega}\left(\frac{r\eta}{2\pi}\right)\right|^2\,d\eta
\geq  {C}r\ln\left(\frac{T}{4}\right)\frac{1}{T^{\frac1{2s}}}.
\end{split}
\end{equation*}

Furthermore, changing variables~$\eta:=\frac{2\pi\xi}r$ and then~$\omega:=\frac{\eta}{\left|\eta \right|^2}$ in~$II$, we see that
\begin{equation*}
\begin{split}
II=\;&-\frac{r^n}{(2\pi)^{n}} \int_{\R^n\setminus B_1} \theta\left(T\left|\eta \right|^{2s}\right)\ln\left(\left|\eta \right|\right)\left|\widehat{\chi}_{\Omega}\left(\frac{r\eta}{2\pi}\right)\right|^2\,d\eta\\
=\;&-\frac{r^n }{(2\pi)^{n}}
\int_{B_1} \theta\left(T\left|\omega \right|^{-2s}\right)\ln\left(\frac{1}{\left|\omega \right|}\right)\left|\widehat{\chi}_{\Omega}\left(\frac{r\omega}{2\pi \left|\omega \right|^2}\right)\right|^2\,\frac{d\omega}{\left|\omega \right|^2}.
\end{split}
\end{equation*}
Now, if $n=1$, $\sigma\in (1/2,1)$ and~$s\in (\sigma,1)$, we use~\eqref{r043vfvdhhdfsHHHHHH} and~\eqref{gbre} to estimate
\begin{equation}\label{kkktttrrrecfr}\begin{split}
-II\leq\;& \frac{|\Omega|^2 r}{2\pi}
\int_{B_1}\frac{\left|\omega\right|^{2s}}{T} \ln\left(\frac1{\left|\omega \right|}\right)\,\frac{d\omega}{\left|\omega \right|^2}
\\= \;&\frac{|\Omega|^2 r}{\pi\,T}
\int_0^{1} \omega^{2s-2} \ln\left(\frac1{\omega}\right)\,d\omega
\\
=\;& \frac{|\Omega|^2 r}{\pi\,T(2s-1)^2}\\
\leq\;&\frac{|\Omega|^2 r}{\pi\,T(2\sigma-1)^2}.
\\
\end{split}
\end{equation}
Therefore, if~$T\geq 4$ and $r\in (0,\tilde{r}_\Omega)$, making use also of~\eqref{fdscfre333} we obtain that  
\begin{equation}\label{ooorefdvcte}
\begin{split}
\partial_s \mathcal{J}^{\Omega_r}(s,T)&=\frac{1}{r} \partial_s g(s,r,T)\\ 
&=\frac{2}{r}(I+II)\\
&\geq \frac{1}{T}\left({C}\ln\left(\frac{T}{4}\right)T^{1-\frac{1}{2s}}-\frac{|\Omega|^2 }{\pi\,(2\sigma-1)^2}\right).
\end{split}
\end{equation}
As a result, there exists some $T_\sigma \in (4,+\infty)$ such that for each $T\in (T_\sigma,+\infty)$, $r\in (0, \tilde{r}_\Omega)$ and~$s\in (\sigma,1)$ it holds that
\begin{equation*}
\partial_s\mathcal{J}^{\Omega_r}(s,T) >0
\end{equation*}
This completes the proof of~\eqref{fverd..34}.~\hfill$\Box$

\section{Proof of Corollary~\ref{pfff}}\label{OJldnfeABBpldfebfBas5Lr}

We begin by observing that, if~$R>0$
\begin{equation}\label{uybguhg6643dre}
\begin{split}&
\widehat{\chi}_{(-R,R)}(\xi)=\int_{\R} e^{-2i \pi  \xi x}\chi_{(-R,R)}(x)\,dx=\int_{-R}^{R}\cos(2\pi \xi x)-i\sin(2\pi \xi x)\,dx\\&\qquad\qquad\qquad
=\frac{\sin(2\pi x \xi)}{2\pi \xi}\bigg|_{-R}^{R}=\frac{\sin(2\pi R\xi)}{\pi \xi}. 
\end{split}
\end{equation}

Besides, we notice that the value of $\mathcal{J}_\infty^\Omega(r,s)$ is invariant under translations of the domain $\Omega$. Indeed, if $v\in \R$, then 
\begin{equation*}
\widehat{\chi}_{\Omega+v}(\xi)=\int_{\Omega+v}e^{-2\pi i \xi x}\,dx=\int_{\Omega}e^{-2\pi i (x+v)\xi}\,dx=e^{-2\pi i v\xi}\widehat{\chi}_{\Omega}(\xi)
\end{equation*}
and therefore, by~\eqref{knrec53vdo97t},
\begin{eqnarray*} &&\mathcal{J}_\infty^{\Omega+v}(r,s)= r^{2s-1}\int_{\R}\frac{\left|\widehat{\chi}_{\Omega+v}(\xi) \right|^2}{(2\pi \left|\xi \right|)^{2s}}\,d\xi = r^{2s-1}\int_{\R}\frac{\left|
e^{-2\pi i v\xi}\widehat{\chi}_{\Omega}(\xi)\right|^2}{(2\pi \left|\xi \right|)^{2s}}\,d\xi\\&&\qquad\qquad\qquad\qquad
= r^{2s-1}\int_{\R}\frac{\left|
\widehat{\chi}_{\Omega}(\xi)\right|^2}{(2\pi \left|\xi \right|)^{2s}}\,d\xi=\mathcal{J}_\infty^{\Omega}(r,s).\end{eqnarray*}
For this reason, without loss of generality, we can assume that $\Omega=(-R,R)$ for some $R\in (0,+\infty)$.

Now we prove the limit in~\eqref{lmoetcbt5432}. For this, we recall~\eqref{fdscfre333} and see that 
\begin{equation*}
T\mathcal{J}^{\Omega_r}(s,T)=r^{2s-1}
\int_{\R} \frac{1-e^{-Tr^{-2s}(2\pi |\xi|)^{2s}}}{(2\pi|\xi|)^{2s}}
\,|\widehat \chi_\Omega(\xi)|^2\,d\xi.
\end{equation*}
Now, we notice that if $\beta \in \left(0,\frac{1}{2}\right)$, $s\in \left[0,\beta\right)$ and $T\in (0,+\infty)$, then 
\begin{equation*}
\frac{1-e^{-Tr^{-2s}\left(2\pi \left|\xi\right|\right)^{2s}}}{\left(2\pi \left|\xi \right| \right)^{2s}} \left| \widehat{\chi}_{\Omega}(\xi)\right|^2\leq \chi_{B_{\frac{1}{2\pi}}}(\xi)\frac{\left| \widehat{\chi}_{\Omega}(\xi)\right|^2}{\left(2\pi \left|\xi \right| \right)^{2\beta}} + \chi_{B_{\frac{1}{2\pi}}^c}(\xi)\left| \widehat{\chi}_{\Omega}(\xi)\right|^2 
.\end{equation*} 
In particular, since $\widehat{\chi}_{\Omega}\in L^2(\R)\cap L^\infty(\R)$, due to~\eqref{uybguhg6643dre},
we obtain that the function on the right-hand side is in $L^1(\R^n)$. Therefore, thanks to the Dominated Convergence Theorem, we can conclude that~\eqref{lmoetcbt5432} holds.

Now, we prove~\eqref{hbcyevco87tb} and~\eqref{uytvuytvuytvfruytf}. We observe that 
\begin{equation*}
\begin{split}
\widehat{\chi}_{\Omega}(\xi)=\int_{-R}^R e^{-2i \pi \xi x}\,dx=R \int_{-1}^1e^{-2 i \pi R \xi x}\,dx=R\widehat{\chi}_{(-1,1)}(R\xi),
\end{split}
\end{equation*}
and therefore the following scaling property holds true:
\begin{equation*}
\begin{split}
\mathcal{J}_\infty^{\Omega}(r,s)=r^{2s-1}\int_{\R}\frac{\left|\widehat{\chi}_{\Omega}(\xi) \right|^2}{\left(2\pi\left|\xi \right| \right)^{2s}}\,d\xi=R^{1+2s} r^{2s-1} \int_{\R} \frac{\left|\widehat{\chi}_{(-1,1)}(\xi) \right|^2}{\left(2\pi\left|\xi \right| \right)^{2s}}\,d\xi=R^{1+2s}\mathcal{J}_{\infty}^{(-1,1)}(r,s).
\end{split}
\end{equation*}
{F}rom this latter equation we evince that 
\begin{equation}\label{noiwrecxloiuyb}
\begin{split}
\partial_s \mathcal{J}_\infty^{\Omega}(r,s)&=2R^{1+2s}r^{2s-1}\left(\ln(R\,r)g(s)+\frac{1}{2}g'(s)\right),
\end{split}
\end{equation}
where we have set 
\begin{equation*}
g(s):=\int_{\R} \frac{\left| \widehat{\chi}_{(-1,1)}(\xi)\right|^2}{\left(2\pi \left|\xi \right| \right)^{2s}}\,d\xi.
\end{equation*}

Now, given~$m\in \N$ and~$\ell>0$, we use the notation $\ln^m\ell:=(\ln\ell)^m$ and we
observe that, for all~$\beta \in \left(0,\frac{1}{2}\right)$ 
and~$s\in [0,\beta)$,
\begin{equation*}
\ln^m\left(2\pi \left|\xi \right|\right)\frac{\left| \widehat{\chi}_{(-1,1)}(\xi)\right|^2}{\left(2\pi \left|\xi \right| \right)^{2s}}\leq  \ln^m\left(2\pi \left|\xi \right|\right)\left(  \chi_{B_{\frac{1}{2\pi}}}(\xi)\frac{\left| \widehat{\chi}_{(-1,1)}(\xi)\right|^2}{\left(2\pi \left|\xi \right| \right)^{2\beta}} + \chi_{B_{\frac{1}{2\pi}}^c}(\xi)\left| \widehat{\chi}_{(-1,1)}(\xi)\right|^2\right). 
\end{equation*}
Thus, recalling~\eqref{uybguhg6643dre}, we have that
\begin{equation*}
\ln^m\left(2\pi \left|\xi \right|\right)\left(  \chi_{B_{\frac{1}{2\pi}}}(\xi)\frac{\left| \widehat{\chi}_{(-1,1)}(\xi)\right|^2}{\left(2\pi \left|\xi \right| \right)^{2\beta}} + \chi_{B_{\frac{1}{2\pi}}^c}(\xi)\left| \widehat{\chi}_{(-1,1)}(\xi)\right|^2\right)\in L^1(\R).
\end{equation*}
Thanks to these observations, we can apply the Dominate Convergence Theorem and obtain that~$g\in C^2\left(\big[0,\frac{1}{2}\big)\right)$, and for each~$s\in \left[0,\frac{1}{2}\right)$,
\begin{equation}\label{lmcdfvchgbrt54i8654rvgv}\begin{split}&
g'(s)=-2\int_{\R}\frac{\left|\widehat{\chi}_{(-1,1)}(\xi) \right|^2}{(2\pi \left|\xi \right|)^{2s}}\ln(2\pi\left|\xi \right|)\,d\xi\\ \mbox{and}\quad\qquad& g''(s)=4\int_{\R}\frac{\left|\widehat{\chi}_{(-1,1)}(\xi) \right|^2}{(2\pi \left|\xi \right|)^{2s}}\ln^2(2\pi\left|\xi \right|)\,d\xi>0.\end{split}
\end{equation}

Furthermore, according to~\eqref{uybguhg6643dre} and~\eqref{lmcdfvchgbrt54i8654rvgv}, we can compute that 
\begin{equation}\label{lmrecSSaew}
\begin{split}&
-\frac{1}{4} g'(0)=\int_{0}^{+\infty} \frac{\sin^2(2\pi \xi)}{\left(\pi \xi\right)^2}\ln(2\pi \xi) \,d\xi=\frac{2}{\pi}\int_{0}^{+\infty}\frac{\sin^2(\eta)}{\eta^2}\ln(\eta)\,d\eta\\&\qquad\qquad\qquad\qquad=-\frac{1}{\pi^2}\left(-1+\gamma+\ln(2)\right)<0,
\end{split}
\end{equation} 
where $\gamma$ is the Euler-Mascheroni constant. 

Also, thanks to~\eqref{lmcdfvchgbrt54i8654rvgv} and~\eqref{lmrecSSaew} we obtain that 
\begin{equation*}
g'(s)=g'(0)+\int_{0}^s g''(s)\,ds>0.
\end{equation*}
Now we introduce the function
\begin{equation*}
f(s):=-\frac{g'(s)}{2g(s)}.
\end{equation*}
We observe that~$f$
is negative and, according to~\eqref{noiwrecxloiuyb},
\begin{equation}\label{JINFCONF-234}
\partial_s \mathcal{J}_\infty^{\Omega}(r,s)=2R^{1+2s}r^{2s-1}g(s)\left(\ln(R\,r)+\frac{g'(s)}{2g(s)}\right)=2R^{1+2s}r^{2s-1}g(s)\big(\ln(R\,r)-f(s)\big).\end{equation}
We also define 
\begin{equation*}
M:=\sup_{s\in \left(0,\frac{1}{2}\right)}f(s)\in(-\infty,0].
\end{equation*}
Therefore, by~\eqref{JINFCONF-234}, if 
\begin{equation*}
r> r_{\Omega}:= \frac{e^{M}}{R}\in(0,+\infty),
\end{equation*}
then, for every~$s\in\left(0,\frac{1}{2}\right)$,
\begin{eqnarray*}&&
\partial_s \mathcal{J}_\infty^{\Omega}(r,s)\ge2R^{1+2s}r^{2s-1}g(s)\big(\ln(R\,r)-M\big)>0
\end{eqnarray*}
and consequently
the functional $\mathcal{J}_\infty^{\Omega}(r,\cdot)$ is strictly increasing in~$s\in \left(0,\frac{1}{2}\right)$. This concludes the proof of~\eqref{hbcyevco87tb}.

Now we prove~\eqref{uytvuytvuytvfruytf}. First, using also the fact that $g(s)>0$ in $s\in \big[0,\frac{1}{2}\big)$, we notice that~$f\in C\left(\big[0,\frac{1}{2}\big)\right)$. Thus, for each~$\sigma\in \left(0,\frac{1}{2}\right)$, we can define the finite value
\begin{equation*}
m_\sigma:=\inf_{s\in (0,\sigma)} f(s)\in(-\infty,0],
\end{equation*}
and, by~\eqref{JINFCONF-234}, we obtain that if 
\begin{equation}\label{223BIS}
r< r_{\sigma,\Omega}:=\frac{e^{m_\sigma}}{R}\in (0,+\infty)
\end{equation}
then, for all~$s\in (0,\sigma)$,
$$ \partial_s \mathcal{J}_\infty^{\Omega}(r,s)\le2R^{1+2s}r^{2s-1}g(s)\big(\ln(R\,r)-m_\sigma\big)<0,$$ showing that~$\mathcal{J}_\infty^{\Omega}(r,\cdot)$ is strictly decreasing in~$s\in (0,\sigma)$. This last step concludes the proof of~\eqref{uytvuytvuytvfruytf}.

It is only left to show~\eqref{ukybgiygiy} and~\eqref{uytvuytvuytvfruytf2}. To this end, we notice that, for each $\sigma \in (0,\frac{1}{2})$,
\begin{equation}\label{hbrecdfgte43}
\begin{split}&
\int_{0}^{\sigma} f(s)\,ds=-\frac{1}{2}\int_{0}^{\sigma}\frac{g'(s)}{g(s)}\,ds=-\frac{1}{2}\int_{0}^{\sigma}\frac{d}{ds}\Big(\ln(g(s))\Big)\,ds=-\frac{1}{2}\Big(\ln(g(\sigma))-\ln(g(0))\Big).
\end{split}
\end{equation}
Since
\begin{equation*}
\lim_{\sigma\nearrow \frac{1}{2}}g(\sigma)=+\infty,
\end{equation*} 
we deduce from~\eqref{hbrecdfgte43} that 
\begin{equation*}
\lim_{\sigma\nearrow \frac{1}{2}}\int_{0}^{\sigma}f(s)\,ds=-\infty.
\end{equation*}
In particular, this implies that 
\begin{equation*}
\lim_{\sigma\nearrow \frac{1}{2}}f(\sigma)=-\infty
\end{equation*}
and thus 
\begin{equation}\label{yhrefd5543}
\lim_{\sigma\nearrow \frac{1}{2}}m_{\sigma}=-\infty.
\end{equation}
This and~\eqref{223BIS} lead to the desired claim in~\eqref{ukybgiygiy}.

It remains to prove~\eqref{uytvuytvuytvfruytf2}. For this, we take~$r\in (r_{\sigma,\Omega},+\infty)$ and observe that
$$
\ln(Rr)>\ln(R r_{\sigma,\Omega})=\ln(e^{m_\sigma})=m_{\sigma}=\inf_{s\in (0,\sigma)} f(s).
$$
Therefore, there exists~$s_\sigma\in(0,\sigma)$ such that~$\ln(Rr)>f(s_\sigma)$.
As a result, by~\eqref{JINFCONF-234},
$$ \partial_s \mathcal{J}_\infty^{\Omega}(r,s_\sigma)=
2R^{1+2s_\sigma}r^{2s_\sigma-1}g(s_\sigma)\big(\ln(R\,r)-f(s_\sigma)\big)>0,
$$
from which we obtain~\eqref{uytvuytvuytvfruytf2}, as desired.~\hfill$\Box$

\end{appendix}

\begin{bibdiv}
\begin{biblist}

\bib{MR3967804}{article}{
   author={Abatangelo, N.},
   author={Valdinoci, E.},
   title={Getting acquainted with the fractional Laplacian},
   conference={
      title={Contemporary research in elliptic PDEs and related topics},
   },
   book={
      series={Springer INdAM Ser.},
      volume={33},
      publisher={Springer, Cham},
   },
   date={2019},
   pages={1--105},
   review={\MR{3967804}},
}

\bib{MR4294710}{article}{
   author={Affili, E.},
   author={Dipierro, S.},
   author={Valdinoci, E.},
   title={Decay estimates in time for classical and anomalous diffusion},
   conference={
      title={2018 MATRIX annals},
   },
   book={
      series={MATRIX Book Ser.},
      volume={3},
      publisher={Springer, Cham},
   },
   date={2020},
   pages={167--183},
   review={\MR{4294710}},
   doi={10.1007/978-3-030-38230-8\_12},
}

\bib{ARIEL}{article}{
author={Ariel, G.},
author={Rabani, A.},
author={Benisty, S.},
author={Partridge, J. D.},
author={Harshey, R. M.},
author={Be'er, A.},
date={2015},
title={Swarming bacteria migrate by L\'evy Walk},
journal={Nature Comm.},
Volume={6},
issue={1},
pages={1--6},
url={https://doi.org/10.1038/ncomms9396},
DOI={10.1038/ncomms9396},
}
	

\bib{dec8f1c6-9001-3bee-9e37-330528089f10}{article}{
 URL = {http://www.jstor.org/stable/27651327},
 author = {Benhamou, S.},
 journal = {Ecology},
 number = {8},
 pages = {1962--1969},
 title = {How many animals really do the L\'evy walk?},
 volume = {88},
 date = {2007},
}

\bib{BEAT}{article}{
 author = {Benhamou, S.},  author = {Collet, J.}, title={Ultimate failure of the L\'evy Foraging Hypothesis: Two-scale searching strategies outperform scale-free ones even when prey are scarce and cryptic},
 journal={J. Theor. Biol.}, date={2015}, number={387}, pages={221--227}, doi={10.1016/j.jtbi.2015.09.034},}

\bib{PhysRevLett113238101}{article}{
  title = {Depletion-controlled starvation of a diffusing forager},
  author = {B\'enichou, O.}, author={Redner, S.},
  journal = {Phys. Rev. Lett.},
  volume = {113},
  issue = {23},
  pages = {1--5},
date = {2014},
  doi = {10.1103/PhysRevLett.113.238101},
  url = {https://link.aps.org/doi/10.1103/PhysRevLett.113.238101}
}

\bib{al2143terteioquwdjkfeng}{article}{
author={Bouchaud, J.},
author={Ott, A.},
author={Langevin, D.},
author={Urbach, W.}, title={Anomalous diffusion in elongated micelles and its
L\'evy flight interpretation}, journal={J. de Physique II}, date={1991}, volume={1}, number={12}, pages={1465--1482},}

\bib{BOYE}{article}{
author={Boyer, D.}, author={Ramos-Fern\'andez, G.},
author={Miramontes, O.}, author={Mateos J. L.}, author={Cocho, G.}, author={Larralde, H.}, author={Ramos, H.},
author={Rojas, F.}, date={2006}, title={Scale-free foraging by primates emerges from their interaction with a complex environment}, journal={Proc. R. Soc. B.}, volume={273}, pages={1743--1750},}

\bib{FLLOW}{article}{title={Following the money},
author={Brockmann, D.},
Volume={23}, Number={2}, date={2010}, journal={Phys. World}, DOI={10.1088/2058-7058/23/02/37}, pages={31--34},}

\bib{MR4207853}{article}{
   author={Buldyrev, S. V.},
   author={Raposo, E. P.},
   author={Bartumeus, F.},
   author={Havlin, S.},
   author={Rusch, F. R.},
   author={da Luz, M. G. E.},
   author={Viswanathan, G. M.},
   title={Comment on ``Inverse square L\'{e}vy walks are not optimal search
   strategies for $d\geq 2$''},
   journal={Phys. Rev. Lett.},
   volume={126},
   date={2021},
   number={4},
   pages={Paper No. 048901, 2},
   issn={0031-9007},
   review={\MR{4207853}},
   doi={10.1103/physrevlett.126.048901},
}

\bib{92-20082wqfer157967}{article}{
 author = {Burggren, W. W.}, author = {Bemis, W. E.},
 journal = {Physiol. Zool.},
 number = {3},
 pages = {515--539},
 title = {Metabolism and ram gill ventilation in juvenile paddlefish, polyodon spathula (Chondrostei: Polyodontidae)},
 volume = {65},
 date = {1992}
}

\bib{CHARNOV1976129}{article}{
title = {Optimal foraging, the marginal value theorem},
journal = {Theoret. Popul. Biol.},
volume = {9},
number = {2},
pages = {129--136},
year = {1976},
issn = {0040-5809},
doi = {10.1016/0040-5809(76)90040-X},
url = {https://www.sciencedirect.com/science/article/pii/004058097690040X},
author = {Charnov, E. L.}}

\bib{MR3090652}{article}{
   author={Chaturapruek, S.},
   author={Breslau, J.},
   author={Yazdi, D.},
   author={Kolokolnikov, T.},
   author={McCalla, S. G.},
   title={Crime modeling with L\'{e}vy flights},
   journal={SIAM J. Appl. Math.},
   volume={73},
   date={2013},
   number={4},
   pages={1703--1720},
   issn={0036-1399},
   review={\MR{3090652}},
   doi={10.1137/120895408},
}

\bib{1508807}{article}{
author = {Chawla, M.},
author = {Duhan, M.},
title = {Levy Flights in Metaheuristics Optimization Algorithms -- A Review},
journal = {Appl. Artific. Intell.},
volume = {32},
number = {9-10},
pages = {802--821},
date  = {2018},
doi = {10.1080/08839514.2018.1508807},
URL = {https://doi.org/10.1080/08839514.2018.1508807},
}

\bib{CHEN}{article}{
author={Chen, K.},
author={Wang, B.},
author={Granick, S.},
date={2015},
title={Memoryless self-reinforcing directionality in endosomal active transport within living cells},
journal={Nature Mat.}, pages={589--593}, volume={14}, issue={6}, doi={10.1038/nmat4239},
}

\bib{CLEME}{proceedings}{
author = {Clementi, A.},
author = {d'Amore, F.},
author = {Giakkoupis, G.},
author = {Natale, E.},
title = {Search via Parallel L\'{e}vy Walks on~$\Z^2$},
date = {2021},
isbn = {9781450385480},
publisher = {Association for Computing Machinery, New York, NY, USA},
doi = {10.1145/3465084.3467921},
booktitle = {Proceedings of the 2021 ACM Symposium on Principles of Distributed Computing},
pages = {81--91},
}

\bib{19Xo0jlfj567890}{article}{
author = {Crear, D. P.},
author = {Brill, R. W.},
author = {Bushnell, P. G.},
author = {Latour, R. J.},
author = {Schwieterman, G. D.},
author = {Steffen, R. M.},
author = {Weng, K. C.},
    title = {The impacts of warming and hypoxia on the performance of an obligate ram ventilator},
    journal = {Conserv. Physiol.},
    volume = {7},
    number = {1},
 date = {2019},
    doi = {10.1093/conphys/coz026},
pages={1-14},}

\bib{curio2012ethology}{book}{
  title={The ethology of predation},
  author={Curio, E.},
  isbn={978-3-642-81028-2},
  series={Zoophysiology and Ecology},
  volume={7},
  url={https://books.google.com.au/books?id=N5n-CAAAQBAJ},
  date={1976}, pages={x+252},
  publisher={Springer, Berlin}
}

\bib{09540090412331314876}{article}{
author = {van Dartel, M.},
author = {Postma, E.},
author = {van den Herik, J.},
author = {de Croon, G.},
title = {Macroscopic analysis of robot foraging behaviour},
journal = {Connect. Sci.},
volume = {16},
number = {3},
pages = {169--181},
year  = {2004},
doi = {10.1080/09540090412331314876},
URL = {https://doi.org/10.1080/09540090412331314876},
}

\bib{EFLFFH}{article}{
   author={Dipierro, S.},
   author={Giacomin, G.},
   author={Valdinoci, E.},
   title={Efficiency functionals for the L\'{e}vy flight foraging hypothesis},
   journal={J. Math. Biol.},
   volume={85},
   date={2022},
   number={4},
   pages={Paper No. 33, 50},
   issn={0303-6812},
   review={\MR{4484924}},
   doi={10.1007/s00285-022-01808-1},
}

\bib{ULTI}{article}{
   author={Dipierro, Serena},
   author={Giacomin, Giovanni},
   author={Valdinoci, Enrico},
   title={Analysis of the L\'{e}vy Flight Foraging Hypothesis in $\Bbb{R}^n$ and
   Unreliability of the Most Rewarding Strategies},
   journal={SIAM J. Appl. Math.},
   volume={83},
   date={2023},
   number={5},
   pages={1935--1968},
   issn={0036-1399},
   review={\MR{4645663}},
   doi={10.1137/22M1526563},
}

\bib{DGV1}{article}{
   author={Dipierro, S.},
   author={Giacomin, G.},
   author={Valdinoci, E.},
   title={Diffusive processes modeled
on the spectral fractional Laplacian
with Dirichlet boundary conditions},
conference={
      title={Analysis and Numerics of Design, Control and Inverse Problems},
   },
   book={
      series={Springer INdAM Ser.},
      publisher={Springer, Cham},
   },}

\bib{DGV1-1}{article}{
   author={Dipierro, S.},
   author={Giacomin, G.},
   author={Valdinoci, E.},
   title={Diffusive processes modeled
on the spectral fractional Laplacian
with Neumann boundary conditions},
  conference={
      title={Analysis and Numerics of Design, Control and Inverse Problems},
   },
   book={
      series={Springer INdAM Ser.},
      publisher={Springer, Cham},
   },
}

\bib{SPETTRALE}{article}{
   author={Dipierro, S.},
   author={Giacomin, G.},
   author={Valdinoci, E.},
title={The L\'evy flight foraging hypothesis in bounded regions:
subordinate Brownian motions
and high-risk/high-gain strategies},
journal={Memoirs of the European Mathematical Society},
}

\bib{SPAN}{article}{
   author={Dipierro, S.},
   author={Giacomin, G.},
   author={Valdinoci, E.},
title={L\'evy flights, optimal foraging strategies, and
foragers with a finite lifespan},
journal={In preparation},}

\bib{PHE}{article}{
  title = {L\'evy flights versus L\'evy walks in bounded domains},
author={Dybiec, B.},
  author = {Gudowska-Nowak, E.},
  author = {Barkai, E.},
  author = {Dubkov, A. A.},
  journal = {Phys. Rev. E},
  volume = {95},
  issue = {5},
  pages = {052102.1--052102.13},
date = {2017},
  doi = {10.1103/PhysRevE.95.052102},
}

\bib{Duncan2022}{article}{
	doi = {10.1088/1748-3190/ac57f0},
	date ={2022},
	volume = {17},
	number = {3},
	pages = {036006},
	author = {Duncan, S.},
	author = {Estrada-Rodriguez, G.},
	author = {Stocek, J.},
	author = {Dragone, M.},
	author = {Vargas, P. A.},
	author = {Gimperlein, H.},
	title = {Efficient quantitative assessment of robot swarms: coverage and targeting L\'evy strategies},
	journal = {Bioinsp. Biomimetics}
}

\bib{112-299535910-1182.1}{article}{
author = {Edwards, A. M.},
title = {Overturning conclusions of L\'evy flight movement patterns by fishing boats and foraging animals},
journal = {Ecology},
volume = {92},
number = {6},
pages = {1247--1257},
doi = {10.1890/10-1182.1},
url = {https://esajournals.onlinelibrary.wiley.com/doi/abs/10.1890/10-1182.1},
date = {2011},}

\bib{REVIS}{article}{
author={Edwards, A. M.},
author={Phillips, R. A.},
author={Watkins, N. W.},
author={Freeman, M. P.},
author={Murphy, E. J.},
author={Afanasyev, V.},
author={Buldyrev, S. V.},
author={da Luz, M. G. E.},
author={Raposo, E. P.},
author={Stanley, H. E.},
author={Viswanathan, G. M.},
date={2007},
title={Revisiting L\'evy flight search patterns of wandering albatrosses, bumblebees and deer},
journal={Nature},
pages={1044--1048},
volume={449},
issue={7165},
doi={10.1038/nature06199},
}


\bib{MR4065205}{article}{
   author={Estrada-Rodriguez, G.},
   author={Gimperlein, H.},
   title={Interacting particles with L\'{e}vy strategies: limits of transport
   equations for swarm robotic systems},
   journal={SIAM J. Appl. Math.},
   volume={80},
   date={2020},
   number={1},
   pages={476--498},
   issn={0036-1399},
   review={\MR{4065205}},
   doi={10.1137/18M1205327},
}

\bib{43a28768-0b62-3fbb-bd95-562f24ac9127}{article}{
 ISSN = {00219398, 15375374},
 URL = {http://www.jstor.org/stable/2350971},
 author = {Fama, E. F.},
 journal = {J. Business},
 number = {4},
 pages = {420--429},
 title = {Mandelbrot and the Stable Paretian Hypothesis},
 volume = {36},
date = {1963}
}

\bib{0015}{book}{
author = {Gelenbe, E.},
author = {Abdelrahman, O. H.},
title = {Search in Random Media with L\'evy Flights},
booktitle = {First-Passage Phenomena and Their Applications}, date={2014},
pages = {366--389},
doi = {10.1142/9789814590297_0015},
URL = {https://www.worldscientific.com/doi/abs/10.1142/9789814590297_0015},
}

\bib{GONZHUM}{article}{DOI={10.1038/nature06958},
author={Gonz\'alez, M. C.},
author={Hidalgo, C. A.},
author={Barab\'asi, A.-L.}, date={2008}, title={Understanding individual human mobility patterns}, journal={Nature}, pages={779--782}, volume={453}, issue={7196},}

\bib{Xkferwupoj54t-0olr}{article}{
author = {Guinard, B.},
author = {Korman, A.},
title = {Intermittent inverse-square L\'evy walks are optimal for finding targets of all sizes},
journal = {Science Adv.},
volume = {7},
number = {15},
pages = {eabe8211},
date = {2021},
doi = {10.1126/sciadv.abe8211},
}

\bib{Hadfield2019}{article}{
pages={7--71},
date={2019},
author={Hadfield, K. A.},
conference={title={History of discovery of Parasitic Crustacea},},
editor={Smit, N. J.},
editor={Bruce, N. L.},
editor={Hadfield, K. A.},
title={Parasitic Crustacea: state of knowledge and future trends},
publisher={Springer International Publishing, Cham},
isbn={978-3-030-17385-2},
doi={10.1007/978-3-030-17385-2\_2},
}

\bib{ARFGBHAR}{article}{
author={Harris, T. H.}, author={Banigan, E. J.}, author={Christian, D. A.}, author={Konradt, C.}, author={Tait Wojno, E. D.}, author={Norose, K.}, author={Wilson, E. H.}, author={John, B.}, author={Weninger, W.}, author={Luster, A. D.}, author={Liu, A. J.}, author={Hunter, C. A.}, title={Generalized L\'evy walks and the role of chemokines in migration of effector CD8+ T cells}, journal={Nature}, date={2012}, volume={486}, issue={7404}, pages={545--548}, doi={10.1038/nature11098},}

\bib{MR142978}{article}{
   author={Herz, C. S.},
   title={Fourier transforms related to convex sets},
   journal={Ann. of Math. (2)},
   volume={75},
   date={1962},
   pages={81--92},
   issn={0003-486X},
   review={\MR{142978}},
   doi={10.2307/1970421},
}

\bib{16cf5b2d-4a8c-3bdc-aedb-9a006440ab8a}{article}{
 URL = {http://www.jstor.org/stable/23365785},
 author = {Hesseling, R. B. P.},
 journal = {J. Quantitat. Criminology},
 number = {1},
 pages = {95--112}, title = {Using data on offender mobility in ecological research},
 volume = {8},
date= {1992},
}

\bib{pkwfINAsGAndRiksdfduIS}{article}{
author={Hoekstra, R.}, author={Vazire, S.}, title={Aspiring to greater intellectual humility in science},
journal={Nat. Hum. Behav.}, date={2021}, number={5}, issue={12}, pages={1602--1607}, doi={10.1038/s41562-021-01203-8},}

\bib{NUOKMDe0rotg}{article}{
author={Humphries, N. E.}, author={Weimerskirch, H.}, author={Queiroz, N.}, author={Southall, E. J.}, author={Sims, D. W.}, title={Foraging success of biological L\'evy flights recorded in situ}, date={2012}, volume={109}, issue={19}, pages={7169--7174}, doi={10.1073/pnas.1121201109},journal = {Proc. Natl. Acad. Sci. USA},}

\bib{KAMSSNdkvse54NA}{article}{
title={L\'evy walks evolve through interaction between movement and environmental complexity},
author={de Jager, M.},
author={Weissing, F. J.},
author={Herman, P. M. J.},
author={Nolet, B. A.},
author={van de Koppel, J.},
journal={Science},
volume={332}, Number={6037}, date={2011},
pages={1551--1553},}

\bib{lmdcommskcvdCOMS}{article}{
title={Comment on ``L\'evy walks evolve through interaction between movement and environmental complexity''},
author={Jansen, V. A. A.},
author={Mashanova, A.},
author={Petrovskii, S.},
journal={Science},
Volume={335}, Issue={6071},
pages={918a--918c}, date={2012},
DOI={10.1126/science.1215747},}

\bib{2004THGIUDKNF}{thesis}{
title={A comparison of the ontogeny of energy consumption in leatherback, Dermochelys coriacea and olive ridley, Lepidochelys olivacea sea turtle hatchlings},
author={Jones, T. T.},
date={2004},
note={ProQuest Dissertations Publishing, Florida Atlantic University},
pages={vii+57},}

\bib{K2345tlages2018}{book}{
author={Klages, R.},
editor={Bunde, A.},
editor={Caro, J.},
editor={K{\"a}rger, J.},
editor={Vogl, G.},
title={Search for food of birds, fish and insects},
bookTitle={Diffusive spreading in nature, technology and society},
date={2018},
publisher={Springer International Publishing, Cham},
pages={49--69},
isbn={978-3-319-67798-9},
doi={10.1007/978-3-319-67798-9_4},
}

\bib{800515fa-8f16-31e6-983d-80e0b8498330-98}{article}{
 URL = {http://www.jstor.org/stable/23638715},
 author = {van Koppen, P. J.},
 author={Jansen, R. W. J.},
 journal = {British J. Criminology},
 number = {2},
 pages = {230--246},
 title = {The road to the robbery: Travel patterns in commercial robberies},
 volume = {38},
 date = {1998},
}

\bib{MR4071775}{article}{
   author={Levernier, N.},
   author={Textor, J.},
   author={B\'{e}nichou, O.},
   author={Voituriez, R.},
   title={Inverse square L\'{e}vy walks are not optimal search strategies for
   $d\geq 2$},
   journal={Phys. Rev. Lett.},
   volume={124},
   date={2020},
   number={8},
   pages={080601, 5},
   issn={0031-9007},
   review={\MR{4071775}},
   doi={10.1103/physrevlett.124.080601},
}

\bib{MR4207854}{article}{
   author={Levernier, N.},
   author={Textor, J.},
   author={B\'{e}nichou, O.},
   author={Voituriez, R.},
   title={Reply to ``Comment on `Inverse square L\'{e}vy walks are not optimal
   search strategies for $d\geq 2$'\,''},
   journal={Phys. Rev. Lett.},
   volume={126},
   date={2021},
   number={4},
   pages={Paper No. 048902, 1},
   issn={0031-9007},
   review={\MR{4207854}},
   doi={10.1103/physrevlett.126.048902},
}

\bib{MR415945}{article}{
   author={Mandelbrot, B. B.},
   title={Comments on: ``A subordinated stochastic process model with finite
   variance for speculative prices'' (Econometrica {\bf 41} (1973), no. 1,
   135--155), by Peter K. Clark},
   journal={Econometrica},
   volume={41},
   date={1973},
   number={1},
   pages={157--159},
   issn={0012-9682},
   review={\MR{415945}},
   doi={10.2307/1913890},
}

\bib{MR1870013}{article}{
   author={Mandelbrot, B. B.},
   title={Stochastic volatility, power laws and long memory. Comment on:
   ``Stochastic volatility as a simple generator of apparent financial power
   laws and long memory'' [Quant. Finance {\bf 1} (2001), no. 6, 621--631; 1
   870 018] by B. LeBaron},
   journal={Quant. Finance},
   volume={1},
   date={2001},
   number={6},
   pages={558--559},
   issn={1469-7688},
   review={\MR{1870013}},
   doi={10.1088/1469-7688/1/6/603},
}

\bib{16-wkolmnbXbgmflmn}{article}{
author={Muka, S. K.},
date={2016},
title={The right tool and the right place for the job: the importance of the field in experimental neurophysiology, 1880--1945},
journal={Hist. Phil. Life Sci.},
volume={38},
issue={3},
doi={10.1007/s40656-016-0107-0},
pages={1-28},}

\bib{oaskcndATT}{article}{
title = {In search for consciousness in animals: Using working memory and voluntary attention as behavioral indicators},
journal = {Neurosc. Biobehav. Rev.},
volume = {142},
pages = {104865},
date = {2022},
doi = {10.1016/j.neubiorev.2022.104865},
author = {Nieder, A.},
}

\bib{a00038109iuygfd12qewret}{article}{
    author = {Nonacs, P.},
    title = {State dependent behavior and the Marginal Value Theorem},
    journal = {Behav. Ecol.},
    volume = {12},
    number = {1},
    pages = {71--83},
    date = {2001},
    issn = {1045-2249},
    doi = {10.1093/oxfordjournals.beheco.a000381},
    url = {https://doi.org/10.1093/oxfordjournals.beheco.a000381},
}

\bib{00-jn200083ojwflb3123}{article}{
author = {Ohl, F. W.},
author = {Scheich, H.},
author = {Freeman, W. J.},
title = {Topographic analysis of epidural pure-tone-evoked potentials in gerbil auditory cortex},
journal = {J. Neurophys.},
volume = {83},
number = {5},
pages = {3123--3132},
date = {2000},
doi = {10.1152/jn.2000.83.5.3123},
}

\bib{fe9a402d-ee59-3159-bd62-493f1ef26fb8-11}{article}{
 URL = {http://www.jstor.org/stable/41426679},
 author = {O'Leary, M.},
 journal = {Cityscape},
 number = {3},
 pages = {161--198},
 title = {Modeling criminal distance decay},
 volume = {13},
 date = {2011},
}

\bib{11-9oijhknfOLIjndwkv}{article}{
author={Oliver, S. P.},
author={Hussey, N. E.},
author={Turner, J. R.},
author={Beckett, A. J.},
title={Oceanic sharks clean at coastal seamount},
journal={PLoS One},
date={2011},
number={6},
issue={3},
doi={10.1371/journal.pone.0014755},
pages={1-11},
}

\bib{wpojlfJDlnf}{article}{
author = {Palyulin, V. V.},
author = {Chechkin, A. V.},
author = {Metzler, R.},
title = {L\'evy flights do not always optimize random blind search for sparse targets},
journal = {Proc. Natl. Acad. Sci. USA},
volume = {111},
number = {8},
pages = {2931--2936},
date = {2014},
doi = {10.1073/pnas.1320424111},
URL = {https://www.pnas.org/doi/abs/10.1073/pnas.1320424111},}

\bib{MR3847185}{article}{
   author={Pan, C.},
   author={Li, B.},
   author={Wang, C.},
   author={Zhang, Y.},
   author={Geldner, N.},
   author={Wang, L.},
   author={Bertozzi, A. L.},
   title={Crime modeling with truncated L\'{e}vy flights for residential
   burglary models},
   journal={Math. Models Methods Appl. Sci.},
   volume={28},
   date={2018},
   number={9},
   pages={1857--1880},
   issn={0218-2025},
   review={\MR{3847185}},
   doi={10.1142/S0218202518400080},
}

\bib{n9ut4hpibSDJMgrnc7n5}{article}{
journal = {Proc. Natl. Acad. Sci. USA}
title={Variation in individual walking behavior creates the impression of a L\'evy flight},
author={Petrovskii, S.},
author={Mashanova, A.},
author={Jansen, V. A. A.},
date={2011}, volume={108}, number={21}, pages={8704--8707}, doi={10.1073/pnas.101520810},}

\bib{OkPY}{article}{
author = {Pyke, G. H.},
title = {Understanding movements of organisms: it's time to abandon the L\'evy foraging hypothesis},
journal = {Meth. Ecol. Evol.},
volume = {6},
number = {1},
pages = {1--16},
doi = {https://doi.org/10.1111/2041-210X.12298},
url = {https://besjournals.onlinelibrary.wiley.com/doi/abs/10.1111/2041-210X.12298},
date = {2015},
}


\bib{0pqwojdlfeacshdnACCsq-lwd-fv}{article}{
author={Reynolds, A. M.},
date={2014},
title={Mussels realize Weierstrassian L\'evy walks as composite correlated random walks},
journal={Sci. Rep.},
pages={1--56},
Volume={4},
issue={1},ulr={https://doi.org/10.1038/srep04409},
DOI={10.1038/srep04409},}

\bib{REYNOLDS201559}{article}{
title = {Liberating L\'evy walk research from the shackles of optimal foraging},
journal = {Phys. Life Rev.},
volume = {14},
pages = {59--83},
date = {2015},
doi = {10.1016/j.plrev.2015.03.002},
url = {https://www.sciencedirect.com/science/article/pii/S1571064515000536},
author = {Reynolds, A.},}

\bib{RAYNS}{article}{
author={Reynolds, A. M.}, title={Current status and future directions of L\'evy walk research}, journal={Biol. Open}, date={2018}, number={7}, issue={1}, doi={10.1242/bio.030106}, pages={1--6},
}

\bib{101242jeb009563}{article}{
    author = {Reynolds, A. M.},
    author={Smith, A. D.},
    author={Reynolds, D. R.},
    author={Carreck, N. L.},
    author={Osborne, J. L.},
    title = {Honeybees perform optimal scale-free searching flights when attempting to locate a food source},
    journal = {J. Exper. Biol.},
    volume = {210},
    number = {21},
    pages = {3763--3770},
    date = {2007},
    month = {11},
    doi = {10.1242/jeb.009563},
    url = {https://doi.org/10.1242/jeb.009563},
    }
    
\bib{Sanhedrai2019}{article}{
pages={1--7},
doi = {10.1209/0295-5075/128/60003},
url = {https://dx.doi.org/10.1209/0295-5075/128/60003},
date = {2020},
volume = {128},
number = {6},
author = {Sanhedrai, H.},
author = {Maayan, Y.},
author = {Shekhtman, L. M.},
title = {Lifetime of a greedy forager with long-range smell},
journal = {Europhys. Lett.},
}

\bib{77-kpXmvd2891937}{article}{
 author = {Schall, J. J.},
 journal = {Herpetolog.},
 number = {3},
 pages = {261--272},
 title = {Thermal ecology of five sympatric species of cnemidophorus (Sauria: Teiidae)},
 volume = {33},
date = {1977}
}

\bib{Shlesinger1986}{book}{
author={Shlesinger, M. F.},
author={Klafter, J.},
editor={Stanley, H. E.},
editor={Ostrowsky, N.},
title={L{\'e}vy walks versus L{\'e}vy flights},
bookTitle={On Growth and Form: Fractal and Non-Fractal Patterns in Physics},
date={1986},
publisher={Springer Netherlands, Dordrecht},
pages={279--283},
isbn={978-94-009-5165-5},
doi={10.1007/978-94-009-5165-5_29},
url={https://doi.org/10.1007/978-94-009-5165-5_29},
}

\bib{MR2707618}{book}{
   author={Silvestre, L. E.},
   title={Regularity of the obstacle problem for a fractional power of the
   Laplace operator},
   note={Thesis (Ph.D.)--The University of Texas at Austin},
   publisher={ProQuest LLC, Ann Arbor, MI},
   date={2005},
   pages={95},
   isbn={978-0542-25310-2},
   review={\MR{2707618}},
}

\bib{SIMSS}{article}{
author={Sims, D. W.},
author={Southall, E. J.},
author={Humphries, N. E.},
author={Hays, G. C.},
author={Bradshaw, C. J. A.},
author={Pitchford, J. W.},
author={James, A.},
author={Ahmed, M. Z.},
author={Brierley, A. S.},
author={Hindell, M. A.},
author={Morritt, D.},
author={Musyl, M. K.},
author={Righton, D.},
author={Shepard, E. L. C.},
author={Wearmouth, V. J.},
author={Wilson, R. P.},
author={Witt, M. J.},
author={Metcalfe, J. D.},
date={2008},
title={Scaling laws of marine predator search behaviour},
journal={Nature},
pages={1098--1102},
volume={451},
issue={7182},}


\bib{ojdlknfei43qonfgn8vBISkdPPOOjmfo}{book}{
title={Foraging Theory},
author={Stephens, D. W.},
author={Krebs, J. R.},
date={1986},
ISBN={0-691-08441-6},
Pages={xiv+247},
Publisher={Princeton University Press, Princeton, NJ},}

\bib{BRUSJDN}{article}{
author = {van Swinderen, B.},
title = {The attention span of a fly},
journal = {Fly},
volume = {1},
number = {3},
pages = {187--189},
date  = {2007},
doi = {10.4161/fly.4561},
}

\bib{ILP}{article}{
author={Viswanathan, G. M.},
author={Afanasyev, V.},
author={Buldyrev, S. V.},
author={Murphy, E. J.},
author={Prince, P. A.},
author={Stanley, H. E.}, date={1996}, title={L\'evy flight search patterns of wandering albatrosses},
journal={Nature},
pages={413--415},
volume={381},
issue={6581},
doi={10.1038/381413a0},}

\bib{1025t324}{book}{
author={Weeks, E. R.},
author={Solomon, T. H.},
author={Urbach, J. S.},
author={Swinney, H. L.},
editor={Shlesinger, M. F.},
editor={Zaslavsky, G. M.},
editor={Frisch, U.},
title={Observation of anomalous diffusion and L{\'e}vy flights},
booktitle={L{\'e}vy Flights and Related Topics in Physics},
date={1995},
publisher={Springer, Berlin},
pages={51--71},
}

\bib{13-20082wqfer}{article}{
author = {Wegner, N. C.},
author = {Sepulveda, C. A.},
author = {Aalbers, S. A.},
author = {Graham, J. B.},
title = {Structural adaptations for ram ventilation: gill fusions in scombrids and billfishes},
journal = {J. Morphol.},
volume = {274},
number = {1},
pages = {108--120},
doi = {https://doi.org/10.1002/jmor.20082},
date = {2013}
}

\bib{LFA}{article}{
title={LFA: A L\'evy Walk and Firefly-Based Search Algorithm: Application to Multi-Target Search and Multi-Robot Foraging},
author={Zedadra, O.},
author={Guerrieri, A.},
author={Seridi, H.},
journal={Big Data Cogn. Comput.}, date={2022}, number={6}, issue={1}, pages={1--15},
doi={10.3390/bdcc6010022},}

\end{biblist}
\end{bibdiv}

\vfill
\end{document}